\documentclass[a4paper, 11pt, pdfLaTex]{amsart}

\usepackage{amsmath}
\usepackage{amssymb}
\usepackage{amsthm}
\usepackage[abbrev,alphabetic]{amsrefs}
\usepackage{amscd}
\usepackage{graphicx}
\usepackage{graphics}
\usepackage{ulem}
\usepackage{url}
\usepackage{multirow}
\usepackage{rotating}

\newcommand{\doi}[1]{\url{https://doi.org/#1}}

\usepackage[top=30truemm,bottom=25truemm,left=35truemm,right=35truemm]{geometry}

\usepackage[all,cmtip]{xy}

\usepackage{color}
\usepackage{xypic}

\newcommand\myurl[1]{\url{#1}}

\BibSpec{webpage}{%
  +{}{\PrintAuthors} {author}
  +{,}{ \textit} {title}
  +{}{ \parenthesize} {date}
  +{,}{ \myurl} {myurl}
  +{,}{ } {note}
  +{.}{ } {transition}
}

\usepackage{listings}
\lstset{
	language=Python,
	frame=tb,
	tabsize=1,
	breaklines=true,
	showstringspaces=false,
	basicstyle = \ttfamily\scriptsize,
	commentstyle = {\itshape \color[cmyk]{1,0.4,1,0}},
	classoffset = 0,
	keywordstyle = {\bfseries \color[cmyk]{0,1,0,0}},
	stringstyle = {\ttfamily \color[rgb]{0,0,1}},
}

\title[Ehrhart theory on periodic graphs II]
{Ehrhart theory on periodic graphs II: Stratified Ehrhart ring theory}


\author[T.\ Inoue]{Takuya Inoue}
\address{Graduate School of Mathematical Sciences, 
the University of Tokyo, 3-8-1 Komaba, Meguro-ku, Tokyo 153-8914, Japan.}
\email{inoue@ms.u-tokyo.ac.jp}

\author{Yusuke Nakamura}
\address{Graduate School of Mathematics, Nagoya University, Furo-cho, Chikusa-ku, Nagoya, 464-8602, Japan.}
\email{y.nakamura@math.nagoya-u.ac.jp}
\urladdr{https://sites.google.com/site/ynakamuraagmath/}

\newtheorem{thm}{Theorem}[section]
\newtheorem{lem}[thm]{Lemma}

\newtheorem{cor}[thm]{Corollary}
\newtheorem{prop}[thm]{Proposition}

\newtheorem{quest}[thm]{Question}

\newtheorem{fact}[thm]{Fact}

\theoremstyle{definition}
\newtheorem{defi}[thm]{Definition}

\theoremstyle{remark}
\newtheorem{rmk}[thm]{Remark}
\newtheorem{ex}[thm]{Example}
\newtheorem*{ackn}{Acknowledgements}

\begin{document}
\subjclass[2020]{Primary 05A15; Secondary 52B20, 05C30}

\keywords{periodic graphs, growth sequence, growth series, Ehrhart theory, convex geometry}

\begin{abstract}
We investigate the ``stratified Ehrhart ring theory'' for periodic graphs, 
which gives an algorithm for determining the growth sequences of periodic graphs. 
The growth sequence $(s_{\Gamma, x_0, i})_{i \ge 0}$ is defined for a graph $\Gamma$ and its fixed vertex $x_0$, where 
$s_{\Gamma, x_0, i}$ is defined as the number of vertices of $\Gamma$ at distance $i$ from $x_0$. 
Although the sequences $(s_{\Gamma, x_0, i})_{i \ge 0}$ for periodic graphs are known to be of quasi-polynomial type, 
their determination had not been established, even in dimension two. 
Our theory and algorithm can be applied to arbitrary periodic graphs of any dimension.
As an application of the algorithm, we determine the growth sequences in several new examples. 
\end{abstract}

\maketitle

\tableofcontents

\section{Introduction}

An \textit{$n$-dimensional periodic graph} $(\Gamma, L)$ is a pair of a directed graph $\Gamma$ (that may have loops and multiple edges) and a free abelian group $L$ of rank $n$ such that $L$ freely acts on $\Gamma$ and its quotient graph $\Gamma/L$ is finite (see Definition \ref{defi:pg}). 
For a vertex $x_0$ of $\Gamma$, the \textit{growth sequence} $(s_{\Gamma, x_0, i})_{i \ge 0}$ 
(resp.\ \textit{cumulative growth sequence} $(b_{\Gamma, x_0, i})_{i \ge 0}$) is defined as the number of vertices of $\Gamma$ whose distance from $x_0$ is $i$ (resp.\ at most $i$). 
Periodic graphs naturally appear in crystallography, also appear as periodic tilings in combinatorics, and as Cayley graphs of virtually abelian groups in geometric group theory (see \cite{GS19}*{Section 13}). 
Furthermore, it is shown in \cite{IN} that the theory of growth sequence of periodic graphs potentially includes the Ehrhart theory of rational polytopes. 

In \cite{GKBS96}, Grosse-Kunstleve, Brunner and Sloane conjectured 
that the growth sequences of periodic graphs are of quasi-polynomial type, i.e., 
there exist an integer $M$ and a quasi-polynomial $f_s: \mathbb{Z} \to \mathbb{Z}$ such that 
$s_{\Gamma, x_0, i} = f_s(i)$ holds for any $i \ge M$ (see Definition \ref{defi:qp}). 
In \cite{NSMN21}, the second author, Sakamoto, Mase, and Nakagawa prove that this conjecture is true for any periodic graphs (Theorem \ref{thm:NSMN}). 
Although it was proved to be of quasi-polynomial type, determining the explicit formulae of growth sequences is still difficult. 
Thus, the following natural question arises. 

\begin{quest}[{cf.\ \cite{IN}*{Question 1.1}}]\label{quest:algorithm}
Find an effective algorithm to determine the explicit formulae of the growth sequences.
\end{quest}

So far, various computational methods have been established for several special classes of periodic graphs. 
In \cite{CS97}, Conway and Sloane study the growth sequences of the contact graphs of some lattices from the viewpoint of the Ehrhart theory, and they give explicit formulae for the root lattices $A_d$. 
In \cite{BHV99}, Bacher, de la Harpe, and Venkov give the proof for the conjectural formulae for the root lattices $B_d$, $C_d$ and $D_d$ (cf.\ \cite{ABHPS}). 
We note that the periodic graphs $(\Gamma, L)$ obtained from the lattices satisfy $\# (V_{\Gamma} /L) = 1$, where $V_{\Gamma}$ denotes the set of vertices of $\Gamma$. 
When $\# (V_{\Gamma} /L) = 1$, $V_{\Gamma}$ has a monoid structure, and the growth sequence can be directly studied by the Hilbert series of the corresponding graded monoid. 
However, when $\# (V_{\Gamma} /L) > 1$, it is more difficult to study the growth sequence. 
In \cite{GS19}, Goodman-Strauss and Sloane proposed ``the coloring book approach" 
and obtained the growth sequence for some periodic tilings. 
In \cites{SM19, SM20a}, Shutov and Maleev obtained the growth sequences for 
tilings satisfying certain conditions that contain the 20 2-uniform tilings. 
In \cite{IN}, the authors introduce a class of periodic graphs called ``well-arranged'', and they give an algorithm to calculate their growth sequences. However, as far as we know, no algorithm has been proposed that can be applied to any periodic graph, even in dimension two. 

The purpose of this paper is to give an algorithm for determining 
the growth sequences that can be applied to all periodic graphs in all dimensions. 
In Subsection \ref{subsection:inv}, we define invariants $\beta \in \mathbb{R}_{\ge 0}$ and 
$\operatorname{cpx}_{\Gamma} \in \mathbb{Z}_{>0}$ from combinatorial information of $\Gamma$. 
In Corollary \ref{cor:main}, we prove that the growth sequence $(s_{\Gamma, x_0, i})_{i \ge 0}$ is a quasi-polynomial on 
$i > \beta$ and $\operatorname{cpx}_{\Gamma}$ is its quasi-period.  
More precisely, we prove that its generating function $G_{\Gamma, x_0}(t)$ is given by 
\[
G_{\Gamma, x_0}(t) := \sum _{i \ge 0} s_{\Gamma, x_0, i} t^i = \frac{Q(t)}{(1-t^{\operatorname{cpx}_{\Gamma}}) ^n}
\]
with some polynomial $Q(t)$ of $\deg Q \le \beta + n \cdot \operatorname{cpx}_{\Gamma}$. 
On the other hand, with the help of a computer program (breadth-first search algorithm), 
we can compute the first few terms of $(s_{\Gamma, x_0, i})_{i \ge 0}$. 
After we compute the first $\lfloor \beta \rfloor + n \cdot \operatorname{cpx}_{\Gamma} + 1$ terms, 
we can determine the generating function as follows: For $\gamma := \lfloor \beta \rfloor + n \cdot \operatorname{cpx}_{\Gamma}$, 
we have 
\[
G_{\Gamma, x_0}(t) = \frac{\bigl( \text{The terms of $(1-t^{\operatorname{cpx}_{\Gamma}}) ^n \sum _{i = 0} ^{\gamma} s_{\Gamma, x_0, i} t^i$ of degree $\gamma$ or less}\bigr)}{(1-t^{\operatorname{cpx}_{\Gamma}}) ^n}. 
\]

Corollary \ref{cor:main} follows from Theorem \ref{thm:fingen}, which we call the ``stratified Ehrhart ring theory''.
Theorem \ref{thm:fingen} gives an algebraic meaning to the growth sequence, and by using it, 
Corollary \ref{cor:main} can be proved by a standard technique in algebraic combinatorics. 
In what follows, we will outline the statement of Theorem \ref{thm:fingen}. 

First, we briefly review the Ehrhart ring of a rational polytope and its structure.
Let $Q \subset \mathbb{R}^n$ be a rational polytope. 
Then, the Ehrhart ring of a rational polytope $Q \subset \mathbb{R}^n$ is defined as the group ring $k[A]$ corresponding to the (commutative) monoid
\[
A := \left \{ (d, y) \in \mathbb{Z}_{\ge 0} \times \mathbb{Z}^n \ \middle | \ y \in dQ\right \}. 
\]
This $A$ has a graded monoid structure with respect to the degree function $\operatorname{deg}: \mathbb{Z}_{\ge 0} \times \mathbb{Z}^n \to \mathbb{Z}_{\ge 0}: (d,y) \mapsto d$. 
When $0 \in \operatorname{int}(Q)$, the monoid $A$ has a nice algebraic structure as follows: 
 
\begin{fact}[{cf.\ \cite{BR}*{Section 3.2}}]\label{fact}
Suppose $0 \in \operatorname{int}(Q)$. 
For each $\sigma \in \operatorname{Facet}(Q)$, we take a triangulation $T_{\sigma}$ of $\sigma$ satisfying 
$V(\Delta) \subset V(\sigma)$ for any $\Delta \in T_{\sigma}$ (see Section \ref{section:notation} for the notation of triangulations). 
For $v \in V(Q)$, let $a_v$ be the minimum positive integer satisfying $a_v v \in \mathbb{Z}^n$. 
For $\sigma \in \operatorname{Facet}(Q)$, $\Delta \in T_{\sigma}$ and $\Delta' \in \operatorname{Face}(\Delta)$, we define 
\[
L_{\Delta'} := \mathbb{R}_{\ge 0} \Delta' \subset \mathbb{R}^n, \qquad 
A \left( L_{\Delta'} \right) := A \cap \left( \mathbb{Z}_{\ge 0} \times L_{\Delta'} \right). 
\]
We also define a free submonoid $M_{\sigma , \Delta'} \subset \mathbb{Z}_{\ge 0} \times \mathbb{Z}^n$ by 
\[
M_{\sigma , \Delta'} := 
\mathbb{Z}_{\ge 0}(1,0) + 
\sum _{v \in V(\Delta')} \mathbb{Z}_{\ge 0} (a_v, a_v v). 
\]
Then, for $\sigma \in \operatorname{Facet}(Q)$, $\Delta \in T_{\sigma}$ and $\Delta' \in \operatorname{Face}(\Delta)$, it follows that 
\begin{itemize}
\item
each $A \left( L_{\Delta'} \right)$ is a free $M_{\sigma , \Delta'}$-module. 
Furthermore, $A \left( L_{\Delta'} \right)$ is freely generated by finitely many elements whose degrees are less than $\sum _{v \in V(\Delta')} a_v$. 
\end{itemize}
\end{fact}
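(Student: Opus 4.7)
The plan is to identify $A(L_{\Delta'})$ with the lattice points of a simplicial cone $C \subset \mathbb{R}^{n+1}$, and then apply the classical decomposition of such cones into a disjoint union of translated copies of the submonoid $\sum_i \mathbb{Z}_{\geq 0} w_i$, indexed by lattice points in the half-open fundamental parallelepiped. Write $v_1, \dots, v_k$ for the vertices of $\Delta'$, $a_i := a_{v_i}$, $w_0 := (1,0)$, and $w_i := (a_i, a_i v_i)$ for $1 \le i \le k$. The first task is to verify that $w_0, w_1, \dots, w_k$ are linearly independent in $\mathbb{R}^{n+1}$. Since $\sigma$ is a facet of $Q$ with $0 \in \operatorname{int}(Q)$, there is a linear functional $\ell$ with $\ell \equiv 1$ on $\operatorname{aff}(\sigma)$ and $\ell(0) = 0$; in particular $0 \notin \operatorname{aff}(\Delta')$, so the affine independence of $v_1, \dots, v_k$ upgrades to linear independence in $\mathbb{R}^n$, whence linear independence of $w_0, \dots, w_k$ in $\mathbb{R}^{n+1}$ follows at once.

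Next I would identify $A(L_{\Delta'}) = C \cap (\mathbb{Z} \times \mathbb{Z}^n)$, where $C := \sum_{i=0}^k \mathbb{R}_{\geq 0} w_i$. For $y = \sum \lambda_i v_i \in L_{\Delta'}$ with $\lambda_i \geq 0$, the condition $y \in dQ$ reduces to the single facet inequality $\ell(y) = \sum \lambda_i \leq d$: the segment from $0$ to any $u \in \Delta' \subset \sigma$ lies in $Q$, whereas no point beyond $u$ on the ray from $0$ does, because $0 \in \operatorname{int}(Q)$ forces the exit point of that ray on $\partial Q$ to lie on $\sigma$. Setting $\mu_i := \lambda_i / a_i$ and $c := d - \sum \lambda_i$, both nonnegative, one has $(d, y) = c\, w_0 + \sum \mu_i w_i \in C$, and the reverse inclusion is immediate.

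Finally I would invoke the standard half-open parallelepiped decomposition for simplicial cones (e.g.\ Beck--Robins, \S3.2): with $\Pi := \{\sum_i t_i w_i : 0 \leq t_i < 1\}$ one has the disjoint decomposition
\[
C \cap (\mathbb{Z} \times \mathbb{Z}^n) \;=\; \bigsqcup_{s \in S} \Bigl( s + \textstyle\sum_{i=0}^k \mathbb{Z}_{\geq 0} w_i \Bigr), \qquad S := \Pi \cap (\mathbb{Z} \times \mathbb{Z}^n).
\]
The set $S$ is finite, of cardinality equal to the index $[(\mathbb{Z} \times \mathbb{Z}^n) \cap \operatorname{span}_{\mathbb{R}}\{w_0, \dots, w_k\} : \sum_i \mathbb{Z} w_i]$. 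This exhibits $A(L_{\Delta'})$ as a free $M_{\sigma,\Delta'}$-module with basis $S$, and each generator $s = \sum t_i w_i \in S$ has degree $t_0 + \sum_{i \geq 1} t_i a_i$, from which the claimed bound on $\deg s$ in terms of $\sum_v a_v$ follows by an elementary argument exploiting that $\deg s \in \mathbb{Z}$ and $t_0 < 1$.

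The principal obstacle is the geometric identification of $A(L_{\Delta'})$ with the lattice points of $C$: it is precisely here that the hypothesis $0 \in \operatorname{int}(Q)$ is used essentially, restricting the defining inequalities of $Q$ to the single facet inequality of $\sigma$ along $L_{\Delta'}$. Once this is done, both the linear-independence check and the parallelepiped decomposition are routine tools from Ehrhart theory.
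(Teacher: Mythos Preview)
Your argument is correct and is precisely the standard simplicial-cone decomposition from the cited reference (Beck--Robins \S3.2); the paper itself does not supply a proof of this Fact, so there is nothing further to compare.

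One small caveat concerns your final sentence on the degree bound. From $s = \sum_{i=0}^k t_i w_i$ with $t_i \in [0,1)$ you get $\deg s = t_0 + \sum_{i\ge 1} t_i a_i < 1 + \sum_i a_i$, and integrality then gives only $\deg s \le \sum_{v \in V(\Delta')} a_v$, not the strict inequality stated in the Fact. In fact equality can occur: take $Q = \operatorname{conv}\{(2,0),(0,1),(-1,-1)\}$, $\sigma$ the edge through $(2,0)$ and $(0,1)$, and $\Delta' = \{(2,0)\}$; then $a_{(2,0)} = 1$, while $(1,(1,0)) \in A(L_{\Delta'})$ is a generator of degree $1$. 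So your ``elementary argument exploiting $\deg s \in \mathbb{Z}$ and $t_0 < 1$'' cannot close the gap to strict inequality, because that inequality is not true in general. This is a minor imprecision in the paper's statement of the Fact (the paper's own main results, Theorems~\ref{thm:fingen} and~\ref{thm:main}, use the non-strict bound), not a defect in your method.
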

\noindent
By using this structure (and the inclusion-exclusion principle), we can prove the counting function 
\[
\mathbb{Z}_{\ge 0} \to \mathbb{Z}_{\ge 0}; \ d \mapsto 
\# \left\{ y \in \mathbb{Z}^n \ \middle | \ y \in dQ  \right\}
= \# \left\{ y \in \mathbb{Z}^n \ \middle | \ (d,y) \in A \right\}
\]
is a quasi-polynomial. 

Next, we shall explain the ``stratified Ehrhart ring theory'' for periodic graphs, which is an analogy of Fact \ref{fact}. 
Let $(\Gamma, L)$ be an $n$-dimensional strongly connected periodic graph, and let $x_0$ be a vertex of $\Gamma$. 
Let $V_{\Gamma}$ denote the set of vertices of $\Gamma$. 
We define 
\[
B := \left \{ (d,y) \in \mathbb{Z}_{\ge 0} \times V_{\Gamma}  \ \middle | \ d_{\Gamma}(x_0, y) \le d \right\}. 
\]
Unlike the case of the Ehrhart ring, $B$ itself does not have a monoid structure when $\# (V_{\Gamma} /L) > 1$, and the situation is more complicated. 
In the statement of Theorem \ref{thm:fingen}, the \textit{growth polytope} $P_{\Gamma}$ plays an important role. 
The growth polytope $P_{\Gamma} \subset L_{\mathbb{R}} := L \otimes _{\mathbb{Z}} \mathbb{R}$ is a rational polytope and is canonically defined from the periodic graph $\Gamma$ (Definition \ref{defi:P}). 
Furthermore, we fix a \textit{periodic realization} $\Phi : V_{\Gamma} \to L_{\mathbb{R}}$, that is a map satisfying $\Phi (u + y) = u + \Phi(y)$ for any $y \in V_{\Gamma}$ and $u \in L$ (see Definition \ref{defi:pr}).

\begin{thm}[{cf.\ Theorem \ref{thm:fingen}}]\label{thm:intro}
There exist 
\begin{itemize}
\item $\beta \in \mathbb{R}_{\ge 0}$, 
\item a triangulation $T_{\sigma}$ for each $\sigma \in \operatorname{Facet}(P_{\Gamma})$, and 
\item $\operatorname{cpx}_{\sigma} (v) \in \mathbb{Z}_{>0}$ and $\beta _{\Delta , v} \in \mathbb{R}_{\ge 0}$ for any $\sigma \in \operatorname{Facet}(P_{\Gamma})$, $\Delta \in T_{\sigma}$ and $v \in V(\Delta)$
\end{itemize}
with the condition ($\spadesuit$) below: 
For $\sigma \in \operatorname{Facet}(P_{\Gamma})$, $\Delta \in T_{\sigma}$, $\Delta' \in \operatorname{Face}(\Delta)$ and $\Delta'' \in \operatorname{Face}(\Delta')$, we define $L_{\Delta'}, L_{\Delta, \Delta', \Delta''} \subset L_{\mathbb{R}}$ by
\[
L_{\Delta'} := \mathbb{R}_{\ge 0} \Delta', \quad 
L_{\Delta, \Delta', \Delta''} := 
\sum_{v \in V(\Delta'')} (\beta _{\Delta , v}, \infty) \cdot v + 
\sum_{v \in V(\Delta') \setminus V(\Delta'')} [0,\beta _{\Delta , v}] \cdot v. 
\]
(Note that we have $L_{\Delta'} 
= 
\bigsqcup\limits_{\Delta'' \in \operatorname{Face}(\Delta')} 
L_{\Delta, \Delta', \Delta''}$.)
For a subset $F \subset L_{\mathbb{R}}$, we define $B \left( F \right) := B \cap \left( \mathbb{Z}_{\ge 0} \times \Phi^{-1} \left( F \right) \right)$. 
We also define a free submonoid $M_{\sigma , \Delta'} \subset \mathbb{Z}_{\ge 0} \times L$ by 
\[
M_{\sigma , \Delta'} := 
\mathbb{Z}_{\ge 0}(1,0) + 
\sum _{v \in V(\Delta')} \mathbb{Z}_{\ge 0} \left( \operatorname{cpx}_{\sigma} (v), \operatorname{cpx}_{\sigma} (v) v \right). 
\]
Then, for $\sigma \in \operatorname{Facet}(P_{\Gamma})$, $\Delta \in T_{\sigma}$, $\Delta' \in \operatorname{Face}(\Delta)$ and $\Delta'' \in \operatorname{Face}(\Delta')$, it follows that 
\begin{enumerate}
\item[($\spadesuit$)]
each $B \left( L_{\Delta, \Delta', \Delta''} \right)$ is a free $M_{\sigma , \Delta''}$-module. Furthermore, $B \left( L_{\Delta, \Delta', \Delta''} \right)$ is freely generated by finitely many elements whose degrees are at most $\beta + \sum _{v \in V(\Delta'')} \operatorname{cpx}_{\sigma} (v)$. 
\end{enumerate}
\end{thm}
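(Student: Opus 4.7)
My proof plan mirrors the classical Ehrhart argument (Fact \ref{fact}), with the crucial new ingredient being an \emph{affine description of the graph distance} $d_\Gamma(x_0, \cdot)$ valid deep inside each cone over a facet of the growth polytope. The overall structure is: (i) produce such an affine formula with effective error constants; (ii) deduce the $M_{\sigma, \Delta''}$-action on each $B(L_{\Delta,\Delta',\Delta''})$ and its freeness; (iii) bound the minimal representatives.

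For step (i), I would fix $\sigma \in \operatorname{Facet}(P_\Gamma)$ and a simplex $\Delta \in T_\sigma$, and study $y \in V_\Gamma$ with $\Phi(y) = \sum_{v \in V(\Delta)} \lambda_v v$. Since $P_\Gamma$ is exactly designed to capture the asymptotic shape of distance balls, one expects that for $\lambda_v$ sufficiently large (say $\lambda_v > \beta_{\Delta,v}$) one has a formula
\[
d_\Gamma(x_0, y) \;=\; \sum_{v \in V(\Delta)} \lambda_v \;+\; c(y),
\]
where $c(y)$ is a correction depending only on a finite invariant of $y$ (its $L$-orbit together with the residues $\lambda_v \bmod \operatorname{cpx}_\sigma(v)$ for $v \in V(\Delta'')$). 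Here $\operatorname{cpx}_\sigma(v)$ is chosen as the least positive integer with $\operatorname{cpx}_\sigma(v) v \in L$ (possibly enlarged to kill any transient effect), and $\beta_{\Delta,v}$ is chosen large enough to absorb non-asymptotic geodesic behavior. Establishing this formula is the technical heart of the theorem; I expect it to follow from a careful geodesic-surgery argument exploiting $L$-periodicity, showing that a shortest path to $y$ can, once $\lambda_v$ exceeds the threshold, be modified to include a translate by $\operatorname{cpx}_\sigma(v) v$ that lengthens it by exactly $\operatorname{cpx}_\sigma(v)$.

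For step (ii), given this formula, the action of $M_{\sigma, \Delta''}$ on $B(L_{\Delta,\Delta',\Delta''})$ is immediate: the generator $(1,0)$ just raises $d$ and preserves $d_\Gamma(x_0, y) \le d$; and the generator $(\operatorname{cpx}_\sigma(v), \operatorname{cpx}_\sigma(v) v)$ with $v \in V(\Delta'')$ translates $y$ by an element of $L$ (so $y + \operatorname{cpx}_\sigma(v) v \in V_\Gamma$), increases $d$ by $\operatorname{cpx}_\sigma(v)$, and by step (i) increases $d_\Gamma(x_0, y)$ by exactly the same amount. The image stays in $L_{\Delta,\Delta',\Delta''}$ because only the large coordinates $v \in V(\Delta'')$ are altered and they remain large. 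Freeness of the $M_{\sigma, \Delta''}$-action then follows because the generators of $M_{\sigma, \Delta''}$ are linearly independent in $\mathbb{Z} \times L$ and $\Phi$ is $L$-equivariant, so distinct monoid elements produce distinct translates.

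For step (iii), a minimal representative $(d,y)$ of an $M_{\sigma,\Delta''}$-orbit is characterized by $d = d_\Gamma(x_0, y)$ (using the $(1,0)$ generator) together with $\lambda_v \in (\beta_{\Delta,v}, \beta_{\Delta,v} + \operatorname{cpx}_\sigma(v)]$ for $v \in V(\Delta'')$ (using the remaining generators); the small coordinates $\lambda_v \in [0, \beta_{\Delta,v}]$ for $v \in V(\Delta') \setminus V(\Delta'')$ are already bounded. Thus the $\Phi$-image of $y$ lies in a bounded polytope and, since $V_\Gamma / L$ is finite, only finitely many $y$ occur. The degree bound $d \le \beta + \sum_{v \in V(\Delta'')} \operatorname{cpx}_\sigma(v)$ then falls out of the affine formula from step (i) applied to these bounded coordinates, after choosing $\beta$ large enough to dominate the contributions of the small coordinates plus the uniform bound on $c(y)$. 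The main obstacle, as noted, is step (i): unlike the Ehrhart setting where ``$y \in dQ$'' restricted to a cone is literally a linear inequality, here the distance function is only asymptotically linear, and producing effective constants $\beta_{\Delta,v}$ and $\operatorname{cpx}_\sigma(v)$ requires controlling all possible shortest-path behaviors — this is what the ``stratified'' refinement by $\Delta''$ is ultimately designed to handle.
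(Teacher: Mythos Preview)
Your three-step plan matches the paper's architecture almost exactly: step (i) is the paper's Theorem \ref{thm:free} (the increment identity $d_\Gamma(x_0, y + \operatorname{cpx}_\sigma(v)v) = d_\Gamma(x_0, y) + \operatorname{cpx}_\sigma(v)$ once $\operatorname{proj}_{\Delta,v}(\Phi(y)) > \beta_{\Delta,v}$), and steps (ii)--(iii) are carried out verbatim in the proof of Theorem \ref{thm:fingen}, with the minimal representatives being exactly your set $\{\lambda_v \in (\beta_{\Delta,v}, \beta_{\Delta,v} + \operatorname{cpx}_\sigma(v)]\}$.

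There is, however, a real gap in your step (i). First, your proposed definition of $\operatorname{cpx}_\sigma(v)$ as the denominator of $v$ is insufficient: the surgery must add or remove \emph{actual cycles} of $\Gamma/L$, so $\operatorname{cpx}_\sigma(v) v$ must be a non-negative integer combination of the vectors $\mu(\langle q\rangle)$ for cycles $q$ \emph{that already appear in the given geodesic's decomposition}. If, say, every cycle with $\nu(q)=v$ has weight $2k$ while the denominator of $v$ is $k$, the denominator fails; and if $v \notin \operatorname{Im}(\nu)$ (which happens for interior triangulation vertices forced by condition $(\diamondsuit)_2$), there are no cycles landing at $v$ at all. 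The paper's $\operatorname{cpx}_\sigma(v)$ (Lemma \ref{lem:r}) is defined so that $\operatorname{cpx}_\sigma(v) v$ lies in the integer cone generated by \emph{any} subset of cycle vectors whose convex hull contains $v$; the $\mathcal{H}''_{\sigma,v}$ machinery (Lemma \ref{lem:H}) then guarantees, via a half-space counting argument (Lemma \ref{lem:rs}), that any geodesic reaching deep into the $v$-direction must use enough cycles from every such subset. Second, you only describe the \emph{lengthening} surgery, but the equality in step (i) needs the \emph{shortening} direction too (Theorem \ref{thm:gen}), and this is the more delicate one: when you delete cycles from a walk decomposition you must check that the remaining cycles are still connected to the path (walkability, Lemma \ref{lem:bunkai}(2)). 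This is why condition (S) in the paper carries the extra support-containment clause, which drives the more elaborate $s^F_{\sigma,v}$ bounds. Your parenthetical ``possibly enlarged to kill any transient effect'' gestures at these issues but does not resolve them; the enlargement is algebraic (cycle-monoid structure) rather than transient.
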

\noindent
We define 
\[
\operatorname{cpx}_{\Gamma} :=
\operatorname{LCM} \bigl\{ \operatorname{cpx}_{\sigma}(v) \ \big | \ \sigma \in \operatorname{Facet}(P _{\Gamma}),\ \Delta \in T_{\sigma},\ v \in V(\Delta) \bigr \}.
\]
In Corollary \ref{cor:main}, by using the algebraic structure in Theorem \ref{thm:intro} (and the inclusion-exclusion principle), we prove that the generating function $G_{\Gamma, x_0}(t)$ is given by the form
\[
G_{\Gamma, x_0}(t) = \frac{Q(t)}{(1-t^{\operatorname{cpx}_{\Gamma}}) ^n}
\]
with some polynomial $Q(t)$ of $\deg Q \le \beta + n \cdot \operatorname{cpx}_{\Gamma}$. 

In \cite{SM20a} and \cite{GS19}, 
similar ideas of calculating the growth sequence by partitioning $V_{\Gamma}$ into finite regions, as in Theorem \ref{thm:intro}, also appear (although in both papers, the construction is explained only for a specific class of planar graphs and not for general graphs in arbitrary dimension). 
Theorem \ref{thm:intro} can be said to be a rigorous formulation and a generalization of their ideas. 

In \cite{IN}, the authors introduce a class of periodic graphs called ``well-arranged''. 
For well-arranged periodic graphs, they prove that $\beta = 0$ and $\beta _{\Delta, v} = 0$ satisfy the statement of Theorem \ref{thm:intro} (see \cite{IN}*{Claim 4.10}). As a consequence, they prove that their growth sequences are quasi-polynomial on $i  > 0$. 

In the two-dimensional case, 
it is relatively simple to implement our algorithm into a computer program 
since there is no need to consider the complicated triangulations of the facets of $P_{\Gamma}$ (see Subsection \ref{subsection:2dim} and Remark \ref{rmk:implement}). 
In fact, using the computer program, the growth sequences of the examples treated in \cite{GS19} can be computed automatically without separate consideration (see Subsection \ref{subsection:tilings}). 
In Subsection \ref{subsection:tilings}, we also examine a $6$-uniform tiling. 
As far as we know, this is the first time the growth series of this example has been determined with a proof. 
In Subsection \ref{subsection:ex1}, we illustrate the algorithm for a $3$-uniform tiling. 
In higher dimensions, although it is possible to implement the algorithm in a computer program, 
it is practical to give the triangulation by hand. 
In Subsections \ref{subsection:ex2} and \ref{subsection:ex3}, we illustrate the algorithm for the periodic graphs corresponding to two carbon allotropes called K6 and {\sf CFS}. 
There, the triangulation and other calculations are partially given by hand, and the rest is done by a computer program. 
As far as we know, this is the first time the growth series of these two examples have been determined with proofs.

The paper is organized as follows: 
in Section \ref{section:pre}, we introduce notations related to periodic graphs following \cite{IN}. 
In Subsection \ref{subsection:inv}, we define the invariants $\beta$ and $\operatorname{cpx}_{\Gamma}$. 
In Subsection \ref{subsection:2dim}, we explain the invariants for $2$-dimensional periodic graphs. 
In Subsection \ref{subsection:main}, we prove the main theorem (Theorem \ref{thm:fingen} and Corollary \ref{cor:main}). 
In Section \ref{section:ex}, we apply our algorithm to some specific periodic graphs. 
In Appendix \ref{section:a}, we briefly describe an input form of periodic graphs to implement the algorithm in a computer program.

\begin{ackn}
Figures \ref{fig:sacada12_1} and \ref{fig:sacada29_1} are drawn by using {\sf VESTA} (\cite{MI11}). 
We would like to thank Professors Atsushi Ito, Takafumi Mase, Junichi Nakagawa, and Hiroyuki Ochiai for many discussions. 
The first author is partially supported by FoPM, WINGS Program, the University of Tokyo and JSPS KAKENHI No.\ 23KJ0795.
The second author is partially supported by JSPS KAKENHI No.\ 18K13384, 22K13888 and JPJSBP120219935. 
\end{ackn}

\section{Notation}\label{section:notation}
\begin{itemize}
\item 
For a set $X$, $\# X$ denotes the cardinality of $X$, and $2 ^X$ denotes the power set of $X$.

\item
For a finite subset $S \subset \mathbb{Z}_{>0}$, $\operatorname{LCM}(S)$ denotes the least common multiple of the elements of $S$. 

\item 
For a polytope $P \subset \mathbb{R}^N$, 
$\operatorname{Facet}(P)$ denotes the set of facets of $P$, 
$\operatorname{Face}(P)$ denotes the set of faces of $P$, and 
$V(P)$ denotes the set of vertices of $P$. 
Note that both $P$ itself and the empty set $\emptyset$ are considered as faces of $P$. 

\item 
For a set $C \subset \mathbb{R}^N$, 
$\operatorname{int}(C)$ denotes the interior of $C$, and 
$\operatorname{relint}(C)$ denotes the relative interior of $C$. 

\item
For a polytope $\sigma \subset \mathbb{R}^N$ of dimension $d$, 
a \textit{triangulation} $T_{\sigma}$ means a finite collection of $d$-simplices with the following two conditions: 
\begin{itemize}
\item $\sigma = \bigcup _{\Delta \in T_{\sigma}} \Delta$. 
\item For any $\Delta _1, \Delta _2 \in T_{\sigma}$, $\Delta _1 \cap \Delta _2$ is a face of $\Delta _1$. 
\end{itemize}

\item 
Let $M$ be a set equipped with a binary operation $*$. 
For $u \in M$ and subsets $X, Y \subset M$, we define subsets $u * X, X * Y \subset M$ by
\[
u * X := \{ u * x \mid x \in X \}, \qquad
X * Y := \{ x * y \mid (x,y) \in X \times Y \}. 
\]

\item
In this paper, monoids always mean commutative monoids. 
We refer the reader to \cite{BG2009} and \cite{NSMN21} for the terminology of monoid and its module theory. 

\end{itemize}

\section{Preliminaries}\label{section:pre}

Following \cite{IN}, we introduce notations related to periodic graphs.

\subsection{Graphs and walks}

In this paper, a \textit{graph} means a directed weighted graph which may have loops and multiple edges. 
A graph $\Gamma = (V_{\Gamma}, E_{\Gamma}, s_{\Gamma}, t_{\Gamma}, w_{\Gamma})$ 
consists of the set $V_{\Gamma}$ of vertices, the set $E_{\Gamma}$ of edges, 
the source function $s_{\Gamma}: E_{\Gamma} \to V_{\Gamma}$, 
the target function $t_{\Gamma}: E_{\Gamma} \to V_{\Gamma}$, and 
the weight function $w_{\Gamma}: E_{\Gamma} \to \mathbb{Z}_{> 0}$. 
We often abbreviate $s_{\Gamma}$, $t_{\Gamma}$ and $w_{\Gamma}$ to $s$, $t$ and $w$ 
when no confusion can arise. 

\begin{defi}
Let $\Gamma = (V_{\Gamma}, E_{\Gamma}, s, t, w)$ be a graph. 

\begin{enumerate}
\item 
$\Gamma$ is called to be \textit{unweighted} if $w(e) = 1$ holds for any $e \in E_{\Gamma}$. 
$\Gamma$ is called to be \textit{undirected} when 
there exists an involution $E_{\Gamma} \to E_{\Gamma};\ e \mapsto e'$
such that $s(e)=t(e')$, $t(e)=s(e')$ and $w(e)=w(e')$. 

\item
A \textit{walk} $p$ in $\Gamma$ is a sequence $e_1 e_2 \cdots e_{\ell}$ of edges $e_i$ of $\Gamma$ satisfying 
$t(e_i) = s(e_{i+1})$ for each $i = 1, \ldots , \ell -1$. 
We define 
\[
s(p) := s(e_1), \quad
t(p) := t(e_{\ell}), \quad
w(p) := \sum _{i = 1} ^{\ell} w(e_i), \quad 
\operatorname{length}(p) := \ell. 
\]
Note that we have $w(p) = \operatorname{length}(p)$ if $\Gamma$ is unweighted.

We say that ``$p$ is a walk from $x$ to $y$" when $x = s(p)$ and $y = t(p)$. 
We also define the \textit{support} $\operatorname{supp}(p) \subset V_{\Gamma}$ of $p$ by
\[
\operatorname{supp}(p) := \{ s(e_1), t(e_1), t(e_2), \ldots , t(e_{\ell}) \} \subset V_{\Gamma}. 
\]

By convention, each vertex $v \in V_{\Gamma}$ is also considered as a walk of length $0$. 
This is called the \textit{trivial walk} at $v$ and denoted by $\emptyset _v$: i.e., 
we define 
\[
s(\emptyset _v) := v, \ \ 
t(\emptyset _v) := v, \ \ 
w(\emptyset _v) := 0, \ \  
\operatorname{length}(\emptyset _v) := 0, \ \ 
\operatorname{supp}(\emptyset _v) := \{ v \}.
\]

\item 
A \textit{path} in $\Gamma$ is a walk $e_1 \cdots e_{\ell}$ such that $s(e_1), t(e_1), t(e_2), \ldots, t(e_{\ell})$ are distinct. 
A walk of length $0$ is considered as a path. 

\item
A \textit{cycle} in $\Gamma$ is a walk $e_1 \cdots e_{\ell}$ with $s(e_1) = t(e_{\ell})$ 
such that $t(e_1), t(e_2), \ldots, t(e_{\ell})$ are distinct. 
A walk of length $0$ is \textbf{NOT} considered as a cycle. 
$\operatorname{Cyc}_{\Gamma}$ denotes the set of cycles in $\Gamma$. 

\item 
For $x, y \in V_{\Gamma}$, $d_{\Gamma}(x, y) \in \mathbb{Z}_{\ge 0} \cup \{ \infty \}$ denotes the smallest weight $w(p)$ of any walk $p$ from $x$ to $y$. 
By convention, we define $d_{\Gamma}(x, y) = \infty$ when there is no walk from $x$ to $y$. 
A graph $\Gamma$ is said to be \textit{strongly connected} 
when we have $d_{\Gamma}(x, y) < \infty$ for any $x, y \in V_{\Gamma}$. 
When $\Gamma$ is undirected, we have $d_{\Gamma}(x,y) = d_{\Gamma}(y,x)$ for any $x,y \in V_{\Gamma}$. 

\item 
$C_1(\Gamma, \mathbb{Z})$ denotes the group of $1$-chains on $\Gamma$ with coefficients in $\mathbb{Z}$, i.e., 
$C_1(\Gamma, \mathbb{Z})$ is a free abelian group generated by $E_{\Gamma}$. 
For a walk $p = e_1 \cdots e_{\ell}$ in $\Gamma$, let $\langle p \rangle$ denote the $1$-chain 
$\sum _{i = 1} ^{\ell} e_i \in C_1(\Gamma, \mathbb{Z})$. 
$H_1(\Gamma, \mathbb{Z}) \subset C_1(\Gamma, \mathbb{Z})$ denotes the $1$-st homology group, i.e., 
$H_1(\Gamma, \mathbb{Z})$ is a subgroup generated by $\langle p \rangle$ for $p \in \operatorname{Cyc}_{\Gamma}$. 
We refer the reader to \cite{Sunada} for more detail. 
\end{enumerate}
\end{defi}

\subsection{Periodic graphs}

\begin{defi}\label{defi:pg}
Let $n$ be a positive integer. 
An \textit{$n$-dimensional periodic graph} $(\Gamma, L)$ is a graph $\Gamma$ and 
a free abelian group $L \simeq \mathbb{Z}^n$ of rank $n$ with the following two conditions:
\begin{itemize}
\item $L$ freely acts on both $V_\Gamma$ and $E_\Gamma$, 
and their quotients $V_\Gamma / L$ and $E_\Gamma / L$ are finite sets. 
\item This action preserves the edge relations and the weight function, i.e.,\ for any $u \in L$ and $e \in E_\Gamma$, 
we have $s_\Gamma(u(e))=u(s_\Gamma(e))$, $t_\Gamma(u(e))=u(t_\Gamma(e))$ and 
$w_{\Gamma}(u(e))=w_{\Gamma}(e)$.
\end{itemize}
\end{defi}

If $(\Gamma, L)$ is an $n$-dimensional periodic graph, 
then the \textit{quotient graph} $\Gamma /L = (V_{\Gamma/L}, E_{\Gamma/L}, s_{\Gamma/L},t_{\Gamma/L},w_{\Gamma/L})$ is defined by 
$V_{\Gamma/L} := V_{\Gamma} /L$, $E_{\Gamma/L} := E_{\Gamma} /L$, 
and the functions $s_{\Gamma/L}: E_\Gamma/L \to V_\Gamma/L$, 
$t_{\Gamma/L}: E_\Gamma/L \to V_\Gamma/L$, and 
$w_{\Gamma/L}: E_\Gamma/L \to \mathbb{Z}_{>0}$ induced from $s_{\Gamma}$, $t_{\Gamma}$, and $w_{\Gamma}$. 
Note that the functions $s_{\Gamma/L}$, $t_{\Gamma/L}$ and $w_{\Gamma/L}$ 
are well-defined due to the second condition in Definition \ref{defi:pg}.

\begin{defi}\label{defi:peri}
Let $(\Gamma,L)$ be an $n$-dimensional periodic graph.
\begin{enumerate}
\item Since $L$ is an abelian group, we use the additive notation: 
for $u \in L$, $x \in V_{\Gamma}$, $e \in E_{\Gamma}$ and a walk $p = e_1 \cdots e_{\ell}$,
\[
u+x := u(x), \quad u+e := u(e), \quad u+p := u(e_1) \cdots u(e_{\ell})
\]
denote their translations by $u$. 

\item 
For any $x \in V_{\Gamma}$ and $e \in E_{\Gamma}$, 
let $\overline{x} \in V_{\Gamma /L}$ and $\overline{e} \in E_{\Gamma /L}$ denote their images in 
$V_{\Gamma /L} = V_{\Gamma} /L$ and $E_{\Gamma /L} = E_{\Gamma} /L$. 
For a walk $p = e_1 \cdots e_{\ell}$ in $\Gamma$, we define its image in $\Gamma /L$ by $\overline{p} := \overline{e_1} \cdots \overline{e_{\ell}}$. 

\item 
When $x, y \in V_{\Gamma}$ satisfy $\overline{x} = \overline{y}$, there exists an element $u \in L$ such that $u + x = y$. 
Since the action is free, such $u \in L$ uniquely exists and is denoted by $y - x$. 

\item 
For a walk $p$ in $\Gamma$ with $\overline{s(p)} = \overline{t(p)}$, we define 
\[
\operatorname{vec}(p) := t(p) - s(p) \in L. 
\]
\end{enumerate}
\end{defi}

\begin{defi}\label{defi:pr}
Let $(\Gamma,L)$ be an $n$-dimensional periodic graph.
We define $L_{\mathbb{R}} := L \otimes _{\mathbb{Z}} \mathbb{R}$. 
\begin{enumerate}
\item 
A \textit{periodic realization} $\Phi: V_{\Gamma} \to L_{\mathbb{R}}$ is a map satisfying
$\Phi(u+x) = u + \Phi(x)$ for any $u \in L$ and $x \in V _{\Gamma}$. 

\item 
Let $\Phi$ be a periodic realization of $(\Gamma, L)$. 
For an edge $e$ and a walk $p$ in $\Gamma$, we define 
\begin{align*}
\operatorname{vec}_{\Phi}(e) &:= \Phi(t(e)) - \Phi(s(e)) \in L_{\mathbb{R}}, \\
\operatorname{vec}_{\Phi}(p) &:= \Phi(t(p)) - \Phi(s(p)) \in L_{\mathbb{R}}. 
\end{align*}
It is easy to see that the value $\operatorname{vec}_{\Phi}(e) \in L_{\mathbb{R}}$ depends only on the class $\overline{e} \in E_{\Gamma /L}$, 
and therefore, 
the map 
\[
\mu _{\Phi} : E_{\Gamma /L} \to L_{\mathbb{R}};\quad \overline{e} \mapsto \operatorname{vec}_{\Phi}(e)
\]
is well-defined. 
It can be extended to a homomorphism 
\[
\mu _{\Phi}: C_1(\Gamma/L, \mathbb{Z}) \to L_{\mathbb{R}}; \quad 
\sum a_i \overline{e_i} \mapsto \sum a_i \mu _{\Phi} (\overline{e_i}). 
\]
By construction, we have $\mu _{\Phi} (\langle \overline{p} \rangle) = \operatorname{vec}_{\Phi}(p)$ for any walk $p$ in $\Gamma$. 

\item 
It is known that the restriction map $\mu _{\Phi} |_{H_1(\Gamma/L, \mathbb{Z})}: H_1(\Gamma/L, \mathbb{Z}) \to L_{\mathbb{R}}$ is independent of the choice of $\Phi$ and that its image is contained in $L$ (see \cite{IN}*{Lemma 2.7}). 
This restriction map is denoted by $\mu : H_1(\Gamma/L, \mathbb{Z}) \to L$. 
\end{enumerate}
\end{defi}

\begin{rmk}\label{rmk:finmap}
In Definition \ref{defi:pr}(2), we have $\operatorname{vec}_{\Phi}(p) = \operatorname{vec}(p)$
for any $p$ satisfying $\overline{s(p)} = \overline{t(p)}$. 
\end{rmk}

We finish this subsection with an observation from \cite{IN} on the decomposition and the composition of walks.
The notation differs slightly from that in \cite{IN}, but is essentially the same.

\begin{defi}[{cf.\ \cite{IN}*{Definition 2.11}}]
Let $(\Gamma, L)$ be an $n$-dimensional periodic graph. 
Let $q_0$ be a path in $\Gamma /L$, and let $a : \operatorname{Cyc}_{\Gamma /L} \to \mathbb{Z}_{\ge 0}$ be a function. 
The pair $(q_0, a)$ is called to be \textit{walkable} if there exists a walk $q'$ in $\Gamma / L$ such that 
$\langle q' \rangle = \langle q_0 \rangle + \sum _{q \in \operatorname{Cyc}_{\Gamma /L}} a(q) \langle q \rangle$. 
\end{defi}

\begin{lem}[\cite{IN}*{Lemma 2.12, Remark 2.13}]\label{lem:bunkai}
Let $(\Gamma, L)$ be an $n$-dimensional periodic graph. 
\begin{enumerate}
\item 
For a walk $q'$ in $\Gamma / L$, there exists a walkable pair $(q_0, a)$ such that 
$\langle q' \rangle = \langle q_0 \rangle + \sum _{q \in \operatorname{Cyc}_{\Gamma /L}} a(q) \langle q \rangle$. 
Furthermore, if $q'$ satisfies $s(q') = t(q')$, then $q_0$ should be a trivial path. 

\item 
Let $q_0$ be a path in $\Gamma /L$, and let $a : \operatorname{Cyc}_{\Gamma /L} \to \mathbb{Z}_{\ge 0}$ be a function. 
Then, $(q_0, a)$ is walkable if and only if 
there exists a sequence $q_1, \ldots , q_{\ell} \in a^{-1}(\mathbb{Z}_{>0})$ satisfying
$a^{-1}(\mathbb{Z}_{>0}) = \{ q_1, \ldots , q_{\ell} \}$ such that 
\[
\biggl( \operatorname{supp}(q_0) \cup \bigcup _{1 \le i \le k} \operatorname{supp}(q_i) \biggr) 
\cap \operatorname{supp}(q_{k+1}) \not = \emptyset
\] 
holds for any $0 \le k \le \ell -1$.

\item
For a walk $q$ in $\Gamma / L$ and a vertex $x \in V_{\Gamma}$ satisfying $s(q) = \overline{x}$, there exists a unique walk $p$ in $\Gamma$ such that $q = \overline{p}$ and $s(p) = x$. 
\end{enumerate}
\end{lem}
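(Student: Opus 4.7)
The plan is to prove the three parts of Lemma \ref{lem:bunkai} in the order (3), (1), (2), since the construction used for (1) already suggests the ordering condition in (2).

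For (3), I would proceed by induction on $\operatorname{length}(q)$. If $q = \overline{e}_1 \overline{e}_2 \cdots \overline{e}_\ell$, the freeness of the $L$-action on $E_{\Gamma}$ guarantees a unique $e_1 \in E_{\Gamma}$ lifting $\overline{e}_1$ with $s(e_1) = x$: pick any lift $\tilde{e}_1$ of $\overline{e}_1$ and translate by the unique $u \in L$ satisfying $u + s(\tilde{e}_1) = x$. Then apply the inductive hypothesis to $\overline{e}_2 \cdots \overline{e}_\ell$ starting from $t(e_1)$. Uniqueness at each step gives uniqueness of $p$.

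For (1), I would inductively shorten $q'$ by peeling off cycles. If $q'$ is already a path, take $q_0 := q'$ and $a \equiv 0$. Otherwise, list the vertices $v_0 = s(q'), v_1, \ldots, v_m = t(q')$ visited by $q'$ and choose the smallest $j$ such that $v_j = v_i$ for some $i < j$; the corresponding subwalk $q := \overline{e}_{i+1} \cdots \overline{e}_j$ is a cycle (its intermediate targets are forced distinct by minimality of $j$). Writing $q' = q'_{\le i} \cdot q \cdot q'_{>j}$ and setting $q'' := q'_{\le i} \cdot q'_{>j}$, we have $\langle q'' \rangle = \langle q' \rangle - \langle q \rangle$ and $\operatorname{length}(q'') < \operatorname{length}(q')$, so induction gives the decomposition. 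Walkability of the resulting pair will follow from the construction in part (2); alternatively, the cycles can be recorded in the order they were extracted and then it is clear that each shares the vertex $v_i$ with what remains. The extra assertion when $s(q') = t(q')$ follows because the path $q_0$ then has equal endpoints and so must be trivial.

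For (2), the ``only if'' direction uses the extraction procedure of part (1): given a walk $q'$ witnessing walkability, extract cycles in the order they first complete, and each such cycle $q_{k+1}$ closes at a vertex lying on the residual walk, which is built from $q_0$ and the previously extracted $q_1, \ldots, q_k$. For the ``if'' direction, I would construct a walk by insertion: start with $q_0$, and at step $k+1$ use the hypothesis to pick $v \in \operatorname{supp}(q_{k+1})$ that already appears in the current walk $w_1 v w_2$, cyclically rotate $q_{k+1}$ so that it starts and ends at $v$ (which remains a cycle by the definition, and whose $1$-chain $\langle \cdot \rangle$ is unchanged since it only records the multiset of edges), and splice it in to form $w_1 \cdot (\text{rotated } q_{k+1}) \cdot w_2$. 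After processing all $q_1, \ldots, q_\ell$, the resulting walk $q'$ has $\langle q' \rangle = \langle q_0 \rangle + \sum_{q} a(q) \langle q \rangle$. The main obstacle is the bookkeeping in this last step: one must verify that the cyclic rotation is legitimate (the target-distinctness condition in the definition of cycle) and that the insertion does not disturb the source and target of the surrounding walk. Both checks are essentially immediate, so the only real work is organizing the induction cleanly.
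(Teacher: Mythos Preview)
The paper does not prove this lemma; it is quoted from \cite{IN}, so there is no in-paper argument to compare against. Your arguments for (3), (1), and the ``if'' direction of (2) are essentially correct and standard. One small omission in the ``if'' direction: the sequence $q_1,\ldots,q_\ell$ enumerates the \emph{set} $a^{-1}(\mathbb{Z}_{>0})$, so after splicing in one rotated copy of $q_{k+1}$ you must insert $a(q_{k+1})-1$ further copies (which is now possible since $\operatorname{supp}(q_{k+1})$ lies in the current walk) to obtain the correct $1$-chain.

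There is, however, a genuine gap in your ``only if'' argument for (2). You propose to take a witnessing walk $q'$ and extract cycles greedily in the order they first close. The problem is that the cycles produced by this extraction need not coincide with the cycles in the \emph{given} set $a^{-1}(\mathbb{Z}_{>0})$: decomposition of a $1$-chain into a path plus cycles is not unique. For instance, on two vertices with parallel edges $a,c:1\to 2$ and $b,d:2\to 1$, the pair $(q_0,a)$ with $q_0=\emptyset_1$ and $a(ad)=a(cb)=1$ is witnessed by $q'=abcd$, but greedy extraction from $abcd$ yields the cycles $ab$ and $cd$, not $ad$ and $cb$. So your procedure orders the wrong cycles. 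A correct approach is to argue connectivity directly: form the auxiliary graph on $\{q_0\}\cup a^{-1}(\mathbb{Z}_{>0})$ with an edge between $q_i$ and $q_j$ whenever $\operatorname{supp}(q_i)\cap\operatorname{supp}(q_j)\neq\emptyset$. If this graph were disconnected, the supports would split into two disjoint vertex sets $S_A\ni s(q_0)$ and $S_B$, and since every edge appearing in $\langle q'\rangle$ lies inside some $\operatorname{supp}(q_i)$, the walk $q'$ could never leave $S_A$, contradicting that cycles in the $B$-part contribute edges to $\langle q'\rangle$. Connectivity then lets you order $a^{-1}(\mathbb{Z}_{>0})$ by a breadth-first search from $q_0$, which is exactly the condition required.
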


\subsection{Growth sequences of periodic graphs}\label{subsection:CS}
Let $\Gamma$ be a locally finite graph, and let $x_0 \in V_{\Gamma}$. 
For $i \in \mathbb{Z}_{\ge 0}$, we define subsets $B_{\Gamma, x_0, i}, S_{\Gamma, x_0,i} \subset V _{\Gamma}$ by
\[
B_{\Gamma, x_0,i} := \{ y \in V _{\Gamma} \mid d_{\Gamma}(x_0, y) \le i \}, \quad
S_{\Gamma, x_0,i} := \{ y \in V _{\Gamma} \mid d_{\Gamma}(x_0, y) = i \}. 
\]
Let $b_{\Gamma, x_0,i} := \# B_{\Gamma, x_0,i}$ and $s_{\Gamma, x_0,i} := \# S_{\Gamma, x_0,i}$ denote their cardinarities. 
The sequences $(s_{\Gamma, x_0,i})_i$ and $(b_{\Gamma, x_0,i})_i$ are called the \textit{growth sequence} and the \textit{cumulative growth sequence} of $\Gamma$ with the start point $x_0$, respectively. 

The \textit{growth series} $G_{\Gamma, x_0}(t)$ of $\Gamma$ with the start point $x_0$ is the generating function 
\[
G_{\Gamma, x_0}(t) := \sum _{i \ge 0} s_{\Gamma, x_0,i} t^i
\]
of the growth series $(s_{\Gamma, x_0,i})_{i}$.

The growth sequences of periodic graphs are known to be of quasi-polynomial type. 
\begin{thm}[{\cite{NSMN21}*{Theorem 2.2}}]\label{thm:NSMN}
Let $(\Gamma, L)$ be a periodic graph, and let $x_0 \in V_{\Gamma}$. 
Then, the functions $b: i \mapsto b_{\Gamma, x_0, i}$ and $s: i \mapsto s_{\Gamma, x_0, i}$ are of quasi-polynomial type (see Definition \ref{defi:qp} below). 
In particular, its growth series is rational. 
\end{thm}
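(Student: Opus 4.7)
The plan is to show that the generating function $G_{\Gamma, x_0}(t) = \sum_i s_{\Gamma, x_0, i} t^i$ is a rational function of the form $P(t)/\prod_j (1 - t^{k_j})$ with $P(t) \in \mathbb{Z}[t]$; by a standard partial-fraction argument this is equivalent to $s_{\Gamma, x_0, i}$ being eventually quasi-polynomial in $i$, and the same then follows for $b_{\Gamma, x_0, i}$ via $\sum_i b_{\Gamma, x_0, i} t^i = G_{\Gamma, x_0}(t)/(1-t)$. The overall strategy is to realize the generating function (or a close variant) as the Hilbert series of a finitely generated graded module over a finitely generated graded monoid ring, in analogy with Fact \ref{fact} for Ehrhart rings of rational polytopes.

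Concretely, since $V_{\Gamma}/L$ is finite, I fix orbit representatives $v_1, \ldots, v_m$ and decompose
\[
b_{\Gamma, x_0, i} = \sum_{j=1}^{m} N_j(i), \qquad N_j(i) := \#\bigl\{u \in L \;\big|\; d_{\Gamma}(x_0, u + v_j) \le i \bigr\},
\]
reducing to quasi-polynomiality of each $N_j$. I then consider
\[
B_j := \bigl\{(d, u) \in \mathbb{Z}_{\ge 0} \times L \;\big|\; d_{\Gamma}(x_0, u + v_j) \le d \bigr\}
\]
and introduce the finitely generated submonoid
\[
A := \mathbb{Z}_{\ge 0}(1,0) + \sum_{c} \mathbb{Z}_{\ge 0}\bigl(w(c),\mu(\langle c \rangle)\bigr) \subset \mathbb{Z}_{\ge 0} \times L,
\]
where $c$ ranges over a finite set of cycles in $\Gamma/L$ chosen so that the classes $\langle c \rangle$ generate $H_1(\Gamma/L, \mathbb{Z})$ and the vectors $\mu(\langle c \rangle)$ span $L_{\mathbb{R}}$. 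Using Lemma \ref{lem:bunkai}(1), any walk realizing $d_{\Gamma}(x_0, u + v_j)$ projects to a $1$-chain of the form $\langle q_0 \rangle + \sum_c a(c)\langle c \rangle$ with $q_0$ one of the finitely many paths in $\Gamma/L$; this will force every element of $B_j$ to differ from one of boundedly many ``base'' elements by something coming from $A$.

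The hard part will be that $B_j$ is not literally an $A$-module: adjoining an element of $A$ to a pair $(d,u) \in B_j$ corresponds to concatenating cycles to an existing walk, and by Lemma \ref{lem:bunkai}(2) this is legal only when the supports of the cycles interlock with the original walk. To get around this, I would partition $B_j$ into finitely many pieces indexed by which base vertices of the quotient graph the cycles ``see'', and on each piece exhibit closure under a suitable sub-monoid of $A$; each piece is then a finitely generated graded module over a finitely generated graded monoid ring, so its Hilbert series is rational with denominator a product of $(1 - t^{k_i})$ by the standard theory (e.g.\ Stanley, or finite generation over a Noetherian $\mathbb{Z}_{\ge 0}$-graded ring). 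Summing the finitely many pieces and the contributions from the $m$ orbits yields rationality of $G_{\Gamma, x_0}(t)$ in the required form, hence quasi-polynomial type for both $s$ and $b$. The walkability obstruction and the resulting stratification is precisely what the present paper refines quantitatively via the growth polytope $P_{\Gamma}$ in Theorem \ref{thm:intro}.
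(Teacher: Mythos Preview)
Your overall strategy matches the paper's --- reduce to rationality of the Hilbert series of $B$ (or its orbit-pieces $B_j$) as a graded module over a finitely generated graded monoid --- and you correctly isolate the walkability obstruction from Lemma \ref{lem:bunkai}(2). The gap is in the step ``partition $B_j$ by which base vertices the cycles see; each piece is then a finitely generated module.'' This partition is not well-defined (a given $(d,u)$ may admit shortest walks with different supports), the strata are not obviously preserved by the proposed sub-monoid action (adding a cycle may change which walk is shortest and hence which stratum the new element belongs to), and most importantly you never establish finite generation. Closure under a sub-monoid $A_S$ gives only a module structure; to conclude rationality you must bound the degrees of a generating set, and the decomposition $\langle\overline p\rangle=\langle q_0\rangle+\sum a(c)\langle c\rangle$ does not do this, because peeling a cycle off a shortest walk yields a walk that need not be shortest to its new endpoint.

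The paper's resolution is geometric rather than support-combinatorial: it stratifies $L_{\mathbb R}$ via the growth polytope into regions $L_{\Delta,\Delta',\Delta''}$ (Definition \ref{defi:M}), and the crucial content is the asymptotic-additivity statement Theorem \ref{thm:free}: once $\operatorname{proj}_{\Delta,v}(\Phi(y))>\beta_{\Delta,v}$, one has the \emph{equality} $d_\Gamma(x_0,y+\operatorname{cpx}_\sigma(v)v)=d_\Gamma(x_0,y)+\operatorname{cpx}_\sigma(v)$. This equality (requiring the machinery of conditions (R), (S) and Lemma \ref{lem:rs}) is what forces each $B(L_{\Delta,\Delta',\Delta''})$ to be a \emph{free} module over the free monoid $M_{\sigma,\Delta''}$, with generators confined to the bounded region $\overline L_{\Delta,\Delta',\Delta''}$ (Theorem \ref{thm:fingen}); finite generation is then immediate. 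Your sketch contains no analogue of this additivity result, and without it the finite-generation claim is unsupported.
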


\begin{defi}[{cf.\ \cite{Sta96}*{Chapter 0}}]\label{defi:qp}
\begin{enumerate}
\item \label{item:f}
A function $f: \mathbb{Z} \to \mathbb{C}$ is called a \textit{quasi-polynomial} 
if there exist a positive integer $N$ and polynomials $Q_0, \ldots, Q_{N-1} \in \mathbb{C}[x]$ such that 
\[
f(n) = \begin{cases} 
Q_0(n) & \text{if $n \equiv 0 \pmod N$, } \\
Q_1(n) & \text{if $n \equiv 1 \pmod N$, } \\
\hspace{5mm} \vdots & \\
Q_{N-1}(n) & \text{if $n \equiv N-1 \pmod N$. }
\end{cases}
\]

\item 
A function $g: \mathbb{Z} \to \mathbb{C}$ is called to be of \textit{quasi-polynomial type} if 
there exists a non-negative integer $M \in \mathbb{Z}_{\ge 0}$ and a quasi-polynomial $f$ such that
$g(n) = f(n)$ holds for any $n > M$. 
The positive integer $N$ is called a \textit{quasi-period} of $g$ when $f$ is of the form in (\ref{item:f}). 
Note that the notion of quasi-period is not unique. 
The minimum quasi-period is called the \textit{period} of $g$. 
We say that the function $g$ is a \textit{quasi-polynomial on} $n \ge m$ if 
$g(n) = f(n)$ holds for $n \ge m$. 
\end{enumerate}
\end{defi}

\subsection{Growth polytope} \label{subsection:GP}
In this subsection, according to \cite{IN}, we define the \textit{growth polytope} $P_{\Gamma} \subset L_{\mathbb{R}}$ for a periodic graph $(\Gamma, L)$. The concept of a growth polytope has been defined and studied in various papers \cites{KS02, KS06, Zhu02, Eon04, MS11, Fri13, ACIK}. 

\begin{defi}\label{defi:P}
Let $(\Gamma , L)$ be an $n$-dimensional periodic graph. 
\begin{enumerate}
\item 
We define the \textit{normalization map} 
$\nu : \operatorname{Cyc}_{\Gamma /L} \to L_{\mathbb{R}} := L \otimes _{\mathbb{Z}} \mathbb{R}$ by 
\[
\nu: \operatorname{Cyc}_{\Gamma /L} \to L_{\mathbb{R}}; \quad p \mapsto \frac{\mu(\langle p \rangle )}{w(p)}. 
\]
We define the \textit{growth polytope}
\[
P_{\Gamma} := \operatorname{conv} \bigl( \operatorname{Im}(\nu) \cup \{ 0 \} \bigr) \subset L_{\mathbb{R}}
\]
as the convex hull of the set 
$\operatorname{Im}(\nu) \cup \{ 0 \} \subset L_{\mathbb{R}}$. 
 
\item 
For a polytope $Q \subset L_{\mathbb{R}}$ and $y \in L_{\mathbb{R}}$, we define 
\[
d_{Q}(y) := \min \{ t \in \mathbb{R}_{\ge 0} \mid x \in t Q \} \in \mathbb{R}_{\ge 0} \cup \{ \infty \}. 
\]
When $0 \in \operatorname{int}(Q)$, 
we have $d_{Q}(y) < \infty$ for any $y \in L_{\mathbb{R}}$. 

\item
For a periodic realization $\Phi:V_{\Gamma} \to L_{\mathbb{R}}$, we define 
\[
d_{P_{\Gamma}, \Phi}(x, y) := d_{P_{\Gamma}}\bigl( \Phi(y) - \Phi(x) \bigr)
\]
for $x, y \in V_{\Gamma}$. 
\end{enumerate}
\end{defi}

\begin{rmk}
Note that $P_{\Gamma}$ is a rational polytope (i.e., $P_{\Gamma}$ is a polytope whose vertices are on $L_{\mathbb{Q}} := L \otimes _{\mathbb{Z}} \mathbb{Q}$). 
This is because $\operatorname{Cyc}_{\Gamma /L}$ is a finite set, and we have $\operatorname{Im}(\nu) \subset L_{\mathbb{Q}}$. 
Furthermore, when $\Gamma$ is strongly connected, we have $0 \in \operatorname{int}(P_{\Gamma})$ by \cite{Fri13}*{Proposition 21} (cf.\ \cite{IN}*{Lemma A.1}). 
\end{rmk}

We define $C_{1}(\Gamma, \Phi, x_0)$ and $C_{2}(\Gamma, \Phi, x_0)$ as invariants that measure the difference between $d_{\Gamma}$ and $d_{P_{\Gamma}, \Phi}$. 

\begin{defi}\label{defi:C}
Let $(\Gamma , L)$ be a strongly connected periodic graph. 
Let $\Phi : V_{\Gamma} \to L_{\mathbb{R}}$ be a periodic realization, and let $x_0 \in V_{\Gamma}$. 
Then, we define 
\begin{align*}
C_{1}(\Gamma, \Phi, x_0) &:= \sup _{y \in V_{\Gamma}} \bigl( d_{P_{\Gamma}, \Phi} (x_0, y) - d_{\Gamma} (x_0, y) \bigr), \\
C_{2}(\Gamma, \Phi, x_0) &:= \sup _{y \in V_{\Gamma}} \bigl( d_{\Gamma}(x_0, y) - d_{P_{\Gamma}, \Phi} ( x_0, y ) \bigr). 
\end{align*}
By \cite{IN}*{Theorem A.2}, we have $C_{1}(\Gamma, \Phi, x_0) < \infty$ and $C_{2}(\Gamma, \Phi, x_0) < \infty$. 
\end{defi}

\begin{rmk}
As in Proposition \ref{prop:C2}(1), it is easy to determine $C_{1}(\Gamma, \Phi, x_0)$. 
However, it is not easy to determine $C_{2}(\Gamma, \Phi, x_0)$ in general, and we just give an upper bound of it in Proposition \ref{prop:C2}(2). 
In Theorem \ref{thm:C2}, we give an algorithm to determine $C_{2}(\Gamma, \Phi, x_0)$ using the invariants defined in Section \ref{section:main}. 
\end{rmk}

\begin{prop}[{cf.\ \cite{IN}*{Theorem A.2}}]\label{prop:C2}
Let $(\Gamma , L)$ be a strongly connected periodic graph. 
Let $\Phi : V_{\Gamma} \to L_{\mathbb{R}}$ be a periodic realization, and let $x_0 \in V_{\Gamma}$. 
\begin{enumerate}
\item 
We have 
\[
C_{1}(\Gamma, \Phi, x_0) = 
\max _{y \in B'_{c-1}} \bigl( d_{P_{\Gamma}, \Phi} (x_0, y) - d_{\Gamma} (x_0, y) \bigr), 
\]
where $c := \# (V_{\Gamma} /L)$ and 
\[
B'_{c-1} := \{ y \in V_{\Gamma} \mid \text{there exists a walk $p$ from $x_0$ to $y$ with $\operatorname{length}(p) \le c-1$}\}. 
\]

\item 
We have $C_{2}(\Gamma, \Phi, x_0) \le C'_2$ when we define $C'_2$ as follows: 
\begin{itemize}
\item 
First, we define $d_v := \min _{q \in \nu ^{-1}(v)} w(q)$ for each $v \in V(P_{\Gamma})$. 

\item
For each $\sigma \in \operatorname{Facet}(P_{\Gamma})$, we fix a triangulation $T_{\sigma}$ of $\sigma$ such that $V(\Delta) \subset V(\sigma)$ holds for any $\Delta \in T_{\sigma}$. 

\item
We define a bounded set $Q \subset L_{\mathbb{R}}$ as follows: 
\[
Q := 
\bigcup _{\substack{\sigma \in \operatorname{Facet}(P_{\Gamma}),\\ 
\Delta \in T_{\sigma}}}
\left( \sum _{v \in V(\Delta)}  [0,1) d_v v \right)
\subset L_{\mathbb{R}}. 
\]

\item
For $y \in V_{\Gamma}$, we define $d'(x_0, y)$ as the smallest weight $w(p)$ of 
a walk $p$ from $x_0$ to $y$ satisfying $\operatorname{supp}(\overline{p}) = V_{\Gamma} /L$. 

\item
Then, we set
\[
C' _2 := \max \bigl \{ d'(x_0, y) - d_{P_{\Gamma}, \Phi} ( x_0, y ) \ \big | \ 
y \in V_{\Gamma}, \ \Phi(y) - \Phi(x_0) \in Q \bigr \}. 
\]
\end{itemize}

\end{enumerate}
\end{prop}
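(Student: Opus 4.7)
The plan is to handle the two parts separately. For (1), the inequality ``$\ge$'' is trivial since $B'_{c-1} \subset V_\Gamma$, so the task is to show that any $y \in V_\Gamma$ can be replaced by some $y' \in B'_{c-1}$ without decreasing the quantity $d_{P_\Gamma,\Phi}(x_0,y) - d_\Gamma(x_0,y)$. I would take a minimum-weight walk $p$ from $x_0$ to $y$: if $\operatorname{length}(p) \le c-1$ then $y \in B'_{c-1}$ and we are done. Otherwise, the pigeonhole principle applied to the $\operatorname{length}(p)+1 \ge c+1$ vertices of $p$ modulo $L$ yields a factorization $p = p_1 q p_2$ in which $\overline{q}$ is a cycle in $\Gamma/L$. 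Setting $y'' := y - \mu(\langle \overline{q}\rangle)$, translating $p_2$ by $-\mu(\langle\overline{q}\rangle) \in L$ produces a walk from $x_0$ to $y''$ of weight $w(p)-w(q)$, so $d_\Gamma(x_0,y'') \le d_\Gamma(x_0,y) - w(q)$, while $\nu(\overline{q}) \in P_\Gamma$ and the subadditivity of the Minkowski functional $d_{P_\Gamma}$ yield $d_{P_\Gamma,\Phi}(x_0,y) \le d_{P_\Gamma,\Phi}(x_0,y'') + w(q)$. These bounds combine to show the difference does not decrease, and iterating terminates since the minimality of $w(p)$ forces $\mu(\langle\overline{q}\rangle) \ne 0$, so $d_\Gamma(x_0,\cdot)$ drops strictly at each step.

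For (2), fix $y \in V_\Gamma$, set $t := d_{P_\Gamma,\Phi}(x_0,y)$, and assume $t > 0$ (the case $t=0$ being trivial). The point $(\Phi(y)-\Phi(x_0))/t$ lies on $\partial P_\Gamma$, hence in some $\Delta \in T_\sigma$ for a facet $\sigma$, so it can be written as $\sum_{v \in V(\Delta)} \lambda_v v$ with $\lambda_v \ge 0$ and $\sum_v \lambda_v = 1$. Setting $\alpha_v := t\lambda_v/d_v$ and decomposing $\alpha_v = n_v + \beta_v$ with $n_v := \lfloor\alpha_v\rfloor$ and $\beta_v \in [0,1)$, and choosing $q_v \in \nu^{-1}(v)$ with $w(q_v) = d_v$ (so that $\mu(\langle\overline{q_v}\rangle) = d_v v \in L$), I would define $y'' := y - \sum_v n_v \mu(\langle\overline{q_v}\rangle) \in V_\Gamma$. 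A direct calculation gives $\Phi(y'') - \Phi(x_0) = \sum_v \beta_v d_v v \in Q$, so the very definition of $C'_2$ supplies a full-support walk $p''$ from $x_0$ to $y''$ with $w(p'') = d'(x_0,y'') \le C'_2 + d_{P_\Gamma,\Phi}(x_0,y'')$.

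Next I would glue $p''$ with $n_v$ copies of each cycle $q_v$. Because $\operatorname{supp}(\overline{p''}) = V_{\Gamma/L}$ meets the support of every $\overline{q_v}$, the hypothesis of Lemma \ref{lem:bunkai}(2) is automatic, so the pair $(\overline{p''},a)$ with $a(\overline{q_v}) := n_v$ is walkable and produces a walk $q'$ in $\Gamma/L$ from $\overline{x_0}$ to $\overline{y}$ of weight $w(p'') + \sum_v n_v d_v$. Lifting $q'$ to $\Gamma$ starting at $x_0$ by Lemma \ref{lem:bunkai}(3) yields a walk $p'$ whose endpoint satisfies $\Phi(t(p')) = \Phi(x_0) + \mu_\Phi(\langle q'\rangle) = \Phi(y)$ and $\overline{t(p')} = \overline{y}$, forcing $t(p') = y$. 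Finally, since $r := \sum_v \beta_v d_v v$ is a nonnegative combination of vectors in $P_\Gamma$ with coefficient sum $\sum_v \beta_v d_v$, one has $d_{P_\Gamma,\Phi}(x_0,y'') = d_{P_\Gamma}(r) \le \sum_v \beta_v d_v$, and combining this with the identity $\sum_v n_v d_v + \sum_v \beta_v d_v = t$ gives $d_\Gamma(x_0,y) \le w(p') \le C'_2 + t$, as required.

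The main obstacle is the gluing step in (2): one must verify that the formal 1-chain $\langle\overline{p''}\rangle + \sum_v n_v \langle\overline{q_v}\rangle$ actually represents a genuine walk in $\Gamma/L$, and that its lift to $\Gamma$ ends at the vertex $y$ itself and not merely in its $L$-orbit. This is where both the full-support definition of $d'$ (to apply Lemma \ref{lem:bunkai}(2)) and the compatibility between $\Phi$, $\mu_\Phi$, and the $L$-action (to pin down the endpoint via $\Phi$) must be handled with care; the rest of the argument is essentially a Minkowski-functional bookkeeping calculation on the cone over $\Delta$.
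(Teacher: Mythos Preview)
Your argument is essentially correct and follows the natural line of reasoning; the paper itself does not give a proof but simply refers to \cite{IN}*{Theorem A.2}, and what you wrote is a faithful reconstruction of the argument one would expect there.

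One imprecision deserves comment. In part (2), you invoke Lemma~\ref{lem:bunkai}(2) with the pair $(\overline{p''},a)$, but that lemma is stated for pairs $(q_0,a)$ with $q_0$ a \emph{path}, whereas $\overline{p''}$ is in general only a walk. The fix is immediate: since $\operatorname{supp}(\overline{p''})=V_{\Gamma/L}$, each cycle $q_v$ shares a vertex with $\overline{p''}$ and can be inserted directly into the walk at that vertex; iterating produces a walk $q'$ in $\Gamma/L$ with $\langle q'\rangle=\langle\overline{p''}\rangle+\sum_v n_v\langle q_v\rangle$, $s(q')=\overline{x_0}$ and $t(q')=\overline{y}$, with no appeal to Lemma~\ref{lem:bunkai}(2) needed. (Alternatively, first decompose $\overline{p''}$ via Lemma~\ref{lem:bunkai}(1) into a path plus cycles; the standard cycle-extraction construction preserves the union of supports, so the connectivity condition of Lemma~\ref{lem:bunkai}(2) is still automatic.) A second cosmetic point: the cycles $q_v$ already live in $\Gamma/L$, so writing $\overline{q_v}$ is redundant. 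With these adjustments your proof goes through cleanly.
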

\begin{proof}
In the proof of \cite{IN}*{Theorem A.2}, (1) and (2) are proved. 
\end{proof}

\section{Stratified Ehrhart ring theory}\label{section:main}

Throughout this section, we fix a strongly connected $n$-dimensional periodic graph $(\Gamma , L)$. 
We also fix $x_0 \in V_{\Gamma}$. 
Let $(b(d))_d$ be the cumulative growth sequence of $\Gamma$ with the start point $x_0$. 
By Theorem \ref{thm:NSMN}, it is known that the function $b: d \mapsto b(d)$ is of quasi-polynomial type. 
The goal of this section is to give an algorithm for finding a quasi-period and an integer $m$ such that 
the function $b$ is a quasi-polynomial on $d \ge m$. 

In Subsection \ref{subsection:inv}, 
we will define invariants $\operatorname{cpx}_{\Gamma} \in \mathbb{Z}_{>0}$ and $\beta \in \mathbb{R}_{\ge 0}$. 
In Subsection \ref{subsection:main}, 
we will prove that the function $b$ is a quasi-polynomial on $d > \beta -1$, 
and $\operatorname{cpx}_{\Gamma}$ is its quasi-period 
(Corollary \ref{cor:main}). 

\subsection{Invariants}\label{subsection:inv}

Let $P := P_{\Gamma}$ be the growth polytope of $\Gamma$. 
We define $\nu: \operatorname{Cyc}_{\Gamma /L} \to L_{\mathbb{R}}$ as in Definition \ref{defi:P}. 
We also fix a periodic realization $\Phi: V_{\Gamma} \to L_{\mathbb{R}}$ such that $\Phi (x_0) = 0$. 
Let $C_1 := C_{1}(\Gamma, \Phi, x_0)$ and $C_2:= C_{2}(\Gamma, \Phi, x_0)$ (see Definition \ref{defi:C}). 

We will define $\beta$ and $\operatorname{cpx}_{\Gamma}$ in Subsection \ref{subsubsection:beta}: 
\begin{itemize}
\item 
In Subsection \ref{subsubsection:T}, for each $\sigma \in \operatorname{Facet}(P)$, 
we take a triangulation $T_{\sigma}$  with conditions $(\diamondsuit)_1$ and $(\diamondsuit)_2$. 

\item
In Subsection \ref{subsubsection:H}, for each $\sigma \in \operatorname{Facet}(P)$ and $v \in \sigma$, 
we define a subset $\mathcal{H}'' _{\sigma, v} \subset 2 ^{\operatorname{Im}(\nu) \cap \sigma}$.

\item 
In Subsection \ref{subsubsection:rs}, 
for $\sigma \in \operatorname{Facet}(P)$, $\Delta \in T_{\sigma}$, $v \in V(\Delta)$ and $F \in \mathcal{H}'' _{\sigma, v}$, 
we take invariants $\operatorname{cpx}_{\sigma}(v) \in \mathbb{Z}_{>0}$, $r^{F} _{\sigma, v} \in \mathbb{Z}_{\ge 0}$ and $s^{F} _{\sigma, v} \in \mathbb{Z}_{\ge 0}$. 

\item 
In Subsection \ref{subsubsection:ah}, 
for $\sigma \in \operatorname{Facet}(P)$, $\Delta \in T_{\sigma}$, $v \in V(\Delta)$ and $F \in \mathcal{H}'' _{\sigma, v}$, 
we take invariants $a^{F} _{\Delta, v} (v) \in (0,1)$ and $h^F_{\Delta, v} \in (0,1)$. 

\item 
In Subsection \ref{subsubsection:beta}, using the invariants above, 
we define invariants  $\beta \in \mathbb{R}_{\ge 0}$ and $\operatorname{cpx}_{\Gamma} \in \mathbb{Z}_{> 0}$. 
\end{itemize}

\subsubsection{Triangulation $T_{\sigma}$}\label{subsubsection:T}
For each facet $\sigma$ of $P$, we take a triangulation $T_{\sigma}$ of $\sigma$ (see Section \ref{section:notation}) with the following two conditions: 
\begin{itemize}
\item[$(\diamondsuit) _1$]
$V(\Delta) \subset L_{\mathbb{Q}}$ holds for any $\Delta \in T_{\sigma}$. 

\item[$(\diamondsuit) _2$]
For any $\Delta \in T_{\sigma}$ and any subset $F \subset \operatorname{Im}(\nu) \cap \sigma$, 
$\Delta \cap \operatorname{conv}(F)$ is a face of $\Delta$. 
\end{itemize}
See Lemma \ref{lem:tri} for the existence of such a triangulation. 
The rationality condition $(\diamondsuit) _1$ will be used in the proof of Lemma \ref{lem:r}. 
Condition $(\diamondsuit) _2$ will be used in the proof of Lemma \ref{lem:a} (see Lemma \ref{lem:sep}). 

\begin{lem}\label{lem:tri}
For each $\sigma \in \operatorname{Facet}(P)$, there exists a triangulation $T_{\sigma}$ of $\sigma$ with the conditions $(\diamondsuit) _1$ and $(\diamondsuit) _2$. 
\end{lem}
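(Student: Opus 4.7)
My plan is to build a polyhedral subdivision $\mathcal{C}$ of $\sigma$ whose vertices all lie in $L_{\mathbb{Q}}$ and in which, for every subset $F \subset \operatorname{Im}(\nu) \cap \sigma$, the polytope $\operatorname{conv}(F)$ is a subcomplex, and then to triangulate $\mathcal{C}$ without adding any new vertices. Condition $(\diamondsuit)_1$ will then follow immediately from the rationality of the vertices of $\mathcal{C}$, and $(\diamondsuit)_2$ will reduce to a short convexity argument. First I would observe that $\operatorname{Im}(\nu) \cap \sigma$ is a finite subset of $L_{\mathbb{Q}} \cap \sigma$: the set $\operatorname{Cyc}_{\Gamma /L}$ is finite because $\Gamma/L$ is finite, and $\nu(q) = \mu(\langle q \rangle)/w(q) \in L_{\mathbb{Q}}$ for every cycle $q$. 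Hence $\mathcal{F} := \{\operatorname{conv}(F) \mid F \subset \operatorname{Im}(\nu) \cap \sigma\}$ is a finite family of rational polytopes contained in $\sigma$.

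To construct $\mathcal{C}$, I would process the polytopes $P \in \mathcal{F}$ one at a time, starting from the face complex of $\sigma$ itself and refining at each step so that the next $P$ becomes a subcomplex of the current complex. When $\dim P = \dim \sigma$, this is accomplished by cutting along the rational facet-hyperplanes of $P$ in $\operatorname{aff}(\sigma)$. When $\dim P < \dim \sigma$, one first cuts by enough rational hyperplanes so that $\operatorname{aff}(P)$ becomes an intersection of faces of cells, and then refines each cell that meets $\operatorname{aff}(P)$ further so that the facets of $P$ in $\operatorname{aff}(P)$ become faces of subcells. Every cut is along a rational affine subspace, so the final complex $\mathcal{C}$ has all vertices in $L_{\mathbb{Q}}$ and contains each member of $\mathcal{F}$ as a subcomplex.

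Next I would triangulate $\mathcal{C}$ by induction on the dimension of its skeleta, adding no new vertices: the $1$-skeleton is already simplicial, and given a coherent triangulation of the $(k-1)$-skeleton, each $k$-cell is triangulated by coning (a pulling triangulation) from one of its vertices over the already-triangulated boundary. The resulting $T_{\sigma}$ has $V(\Delta) \subset V(\mathcal{C}) \subset L_{\mathbb{Q}}$ for every $\Delta \in T_{\sigma}$, verifying $(\diamondsuit)_1$, and each cell of $\mathcal{C}$ — in particular each $\operatorname{conv}(F)$ — remains a subcomplex of $T_{\sigma}$. For $(\diamondsuit)_2$, fix $F \subset \operatorname{Im}(\nu) \cap \sigma$ and $\Delta \in T_{\sigma}$. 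Then $\Delta \cap \operatorname{conv}(F)$ is the union of $\Delta \cap \Delta'$ over those simplices $\Delta' \in T_{\sigma}$ contained in $\operatorname{conv}(F)$, and each $\Delta \cap \Delta'$ is a face of $\Delta$ by the triangulation axioms, so $\Delta \cap \operatorname{conv}(F)$ is a union of faces of $\Delta$. It is also convex as an intersection of convex sets. Setting $V' := V(\Delta) \cap \operatorname{conv}(F)$, one has $\operatorname{conv}(V') \subset \Delta \cap \operatorname{conv}(F)$ by convexity, and the reverse inclusion follows because every face of $\Delta$ contained in $\operatorname{conv}(F)$ has all its vertices in $V'$. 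Therefore $\Delta \cap \operatorname{conv}(F) = \operatorname{conv}(V')$ is a face of $\Delta$.

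The hard part will be the inductive refinement producing $\mathcal{C}$, especially for lower-dimensional members $P \in \mathcal{F}$ whose affine hull has codimension at least two in $\operatorname{aff}(\sigma)$: one must choose enough rational hyperplanes so that $\operatorname{aff}(P)$ becomes an intersection of faces of cells, then separately refine within that affine slice so that the vertices and facets of $P$ become cells, all while preserving polyhedrality and rationality of the complex. This is a standard but somewhat delicate dimensional induction in polyhedral geometry, and keeping careful track of what ``subcomplex'' means across the hierarchy of $\operatorname{aff}(P)$'s is where most of the bookkeeping lives.
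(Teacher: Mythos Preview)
Your proposal is correct and follows essentially the same strategy as the paper's proof: both define the family $\mathcal{S} = \mathcal{F} = \{\operatorname{conv}(F) \mid F \subset \operatorname{Im}(\nu)\cap\sigma\}$ of rational polytopes, construct a rational triangulation $T_\sigma$ in which every member of this family is a union of faces of simplices, and then deduce $(\diamondsuit)_2$ by observing that $\Delta \cap \operatorname{conv}(F)$ is a convex union of faces of the simplex $\Delta$ and hence itself a face. The paper is terser---it simply asserts that such a subdivision exists---whereas you spell out the hyperplane-cutting and pulling-triangulation steps and make the ``convex union of faces is a face'' step explicit via $V' = V(\Delta)\cap\operatorname{conv}(F)$; but the logical skeleton is identical.
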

\begin{proof}
We fix $\sigma \in \operatorname{Facet}(P)$. 
First, we define a set $\mathcal{S}$ of polytopes by 
\[
\mathcal{S} := \left\{ \operatorname{conv}(F) \ \middle | \  F \in 2^{\operatorname{Im}(\nu) \cap \sigma} \right \}. 
\]
Then, since $\mathcal{S}$ is a finite set of polytopes, by taking a subdivision, we can construct a triangulation $T_{\sigma}$ of $\sigma$ with the following condition: 
\begin{itemize}
\item[(i)] 
For any $Q \in \mathcal{S}$, there exist $\Delta '_1, \ldots, \Delta '_{\ell} 
\in \{ \Delta ' \mid \Delta \in T_{\sigma},\ \Delta ' \in \operatorname{Face}(\Delta) \}$
such that $Q = \bigcup _{i = 1} ^{\ell} \Delta ' _{i}$. 
\end{itemize}
Since $\operatorname{Im}(\nu) \subset L_{\mathbb{Q}}$, $\mathcal{S}$ consists of rational polytopes. 
Therefore, we may take such a triangulation $T_{\sigma}$ to satisfy $(\diamondsuit)_1$. 

We prove that the $T_{\sigma}$ satisfies $(\diamondsuit)_2$. 
Let $F \subset \operatorname{Im}(\nu) \cap \sigma$ and $\Delta \in T_{\sigma}$. 
By (i), $\Delta \cap \operatorname{conv}(F)$ is a union of some faces of $\Delta$. 
Since $\Delta \cap \operatorname{conv}(F)$ is a convex set, $\Delta \cap \operatorname{conv}(F)$ should be a face of $\Delta$. 
\end{proof}

\begin{ex}
Figure \ref{fig:T} shows the example of $T_{\sigma}$. 
In this example, we assume that $\sigma$ is a quadrilateral with vertices $u_1, \ldots, u_4$ and 
$\operatorname{Im}(\nu) \cap \sigma = V(\sigma) = \{ u_1, \ldots , u_4 \}$. 
Then, $T_{\sigma} = \{ \Delta _1,  \Delta _2, \Delta _3, \Delta _4 \}$ in the figure satisfies the condition $(\diamondsuit)_2$. 
\begin{figure}[htbp]
\centering
\includegraphics[width=14cm]{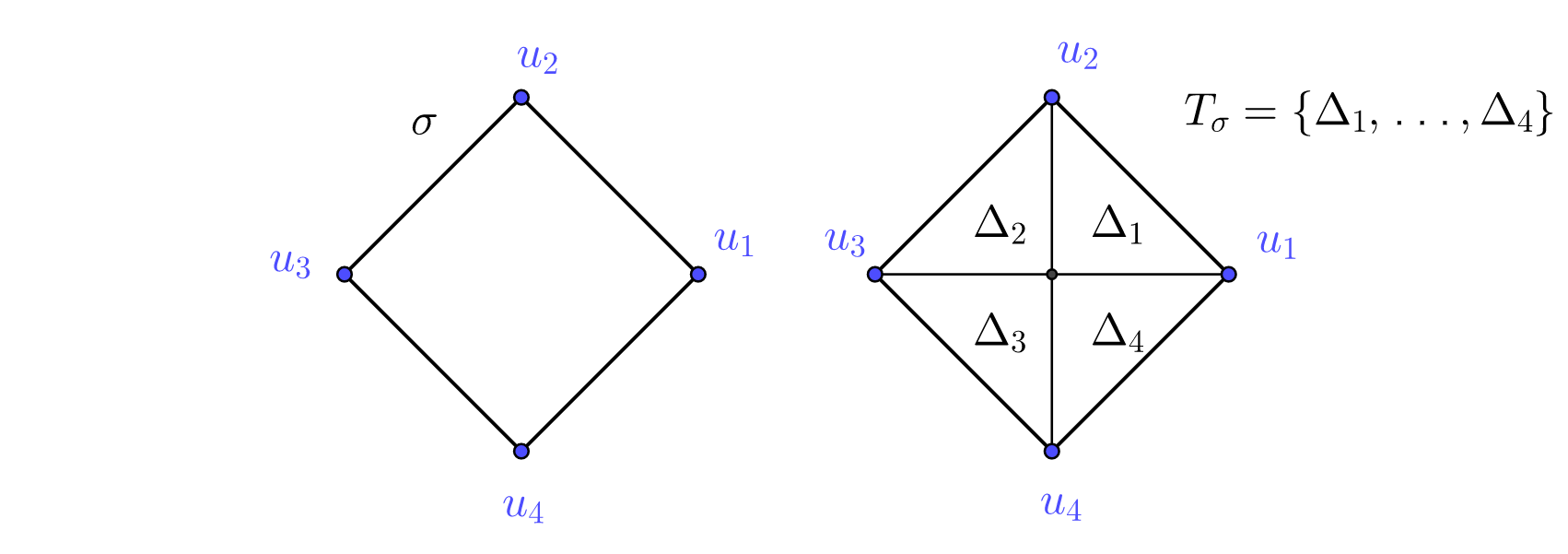}
\caption{$T_{\sigma}$ for a quadrilateral $\sigma$.}
\label{fig:T}
\end{figure}
\end{ex}

\subsubsection{$\mathcal{H}''_{\sigma, v}$}\label{subsubsection:H}
\begin{defi}
Let $\sigma \in \operatorname{Facet}(P)$. 
Let $\overline{\sigma}$ denote the hyperplane of $L_{\mathbb{R}}$ that contains $\sigma$. 
For $v \in \overline{\sigma}$, let $\mathcal{H}_{\sigma, v}$ denote the set of all closed half-spaces $H$ of 
$\overline{\sigma}$ satisfying $v \in \partial H := H  \setminus \operatorname{relint}(H)$. 
Let 
\[
\mathcal{H}' _{\sigma, v} := \{ \operatorname{Im}(\nu) \cap H \mid H \in \mathcal{H}_{\sigma, v} \} 
\subset 2 ^{\operatorname{Im}(\nu) \cap \sigma}. 
\]
Note that $\mathcal{H}' _{\sigma, v}$ is a finite set since $\operatorname{Im}(\nu)$ is a finite set. 
Let $\mathcal{H}'' _{\sigma, v} \subset \mathcal{H}' _{\sigma, v}$ 
be the set of minimal elements of $\mathcal{H}' _{\sigma, v}$ with respect to the inclusion. 
\end{defi}

See Lemma \ref{lem:H} for a property of $\mathcal{H}'' _{\sigma, v}$. 

\begin{lem}\label{lem:H}
Let $\sigma \in \operatorname{Facet}(P)$ and $v \in \sigma$. 
For each $F \in \mathcal{H}'' _{\sigma, v}$, we pick arbitrary $u_F \in F$. 
For any choice of $u_F$, we have $v \in \operatorname{conv}\bigl ( \{ u_F \mid F \in \mathcal{H}'' _{\sigma, v} \} \bigr)$. 
\end{lem}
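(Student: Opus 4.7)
The plan is to argue by contradiction using the separating hyperplane theorem inside the affine hyperplane $\overline{\sigma}$. I will suppose $v \notin K := \operatorname{conv}\bigl(\{u_F \mid F \in \mathcal{H}''_{\sigma, v}\}\bigr)$; because $K$ is a compact convex subset of $\overline{\sigma}$ and $v \in \overline{\sigma}$, separation within $\overline{\sigma}$ produces a linear functional $\theta \colon L_{\mathbb{R}} \to \mathbb{R}$ whose restriction to $\overline{\sigma}$ is nonconstant and satisfies $\theta(u_F) > \theta(v)$ for every $F \in \mathcal{H}''_{\sigma, v}$.

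The key move is to introduce the \emph{opposite} closed half-space $H := \{x \in \overline{\sigma} \mid \theta(x) \le \theta(v)\} \in \mathcal{H}_{\sigma, v}$ and its intersection $F^{*} := \operatorname{Im}(\nu) \cap H \in \mathcal{H}'_{\sigma, v}$. I first need to check $F^{*} \ne \emptyset$: since $\Gamma$ is strongly connected, $0 \in \operatorname{int}(P_{\Gamma})$, so $0 \notin \sigma$, whence $V(\sigma) \subset \operatorname{Im}(\nu)$; therefore $v \in \sigma \subset \operatorname{conv}(\operatorname{Im}(\nu) \cap \sigma)$, and expanding $v = \sum_i \lambda_i u_i$ as a convex combination from $\operatorname{Im}(\nu) \cap \sigma$, the identity $\sum_i \lambda_i (\theta(u_i) - \theta(v)) = 0$ forbids $\theta(u_i) > \theta(v)$ from holding for every $i$ simultaneously, so some $u_i \in F^{*}$.

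Since $\mathcal{H}'_{\sigma, v}$ is finite (as $\operatorname{Im}(\nu)$ is finite), I can then pick a minimal element $F_0 \in \mathcal{H}''_{\sigma, v}$ with $F_0 \subset F^{*}$. This will force $u_{F_0} \in F_0 \subset F^{*} \subset H$, hence $\theta(u_{F_0}) \le \theta(v)$, in direct contradiction with $\theta(u_{F_0}) > \theta(v)$.

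The main subtlety is taking $H$ on the \emph{opposite} side of the separating hyperplane from $K$: this reversal is exactly what compels any minimal $F_0 \subset F^{*}$ to supply a representative $u_{F_0}$ on the forbidden side of $\theta$. The only genuine technical input is $F^{*} \ne \emptyset$, which falls out immediately from $0 \in \operatorname{int}(P_{\Gamma})$, so I anticipate no real obstacle beyond noticing this ``flip'' in the choice of half-space.
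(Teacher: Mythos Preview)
Your proof is correct and follows essentially the same route as the paper's: both argue by contradiction, use separation in $\overline{\sigma}$ to obtain a closed half-space $H \in \mathcal{H}_{\sigma,v}$ disjoint from $\{u_F\}$, pick a minimal element $F_0 \in \mathcal{H}''_{\sigma,v}$ below $\operatorname{Im}(\nu)\cap H$, and derive the contradiction $u_{F_0}\in H$. Your explicit verification that $F^{*}\ne\emptyset$ (equivalently, that $\emptyset\notin\mathcal{H}'_{\sigma,v}$) is a welcome bit of care that the paper leaves implicit.
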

\begin{proof}
Supose the contrary that $v \not \in \operatorname{conv}\bigl ( \{ u_F \mid F \in \mathcal{H}'' _{\sigma, v} \} \bigr)$. 
Then, $\operatorname{conv}\bigl ( \{ u_F \mid F \in \mathcal{H}'' _{\sigma, v} \} \bigr)$ 
should be contained in an open half-space of $\overline{\sigma}$ whose boundary passes through $v$, 
and hence, we have
\[
\{ u_F \mid F \in \mathcal{H}'' _{\sigma, v} \} \cap H = \emptyset
\]
for some $H \in \mathcal{H}_{\sigma, v}$. 
Let $F_H := \operatorname{Im}(\nu) \cap H \in \mathcal{H}' _{\sigma, v}$. 
Then, by the definition of $\mathcal{H}'' _{\sigma, v}$, there exists $F' \in \mathcal{H}'' _{\sigma, v}$ such that $F' \subset F_H$.
Since we have 
\[
u_{F'} \in \{ u_F \mid F \in \mathcal{H}'' _{\sigma, v} \}, \qquad 
u_{F'} \in F' \subset F_H \subset H, 
\]
we get a contradiction. 
\end{proof}

\begin{ex}
For example, in Figure \ref{fig:H}, 
we have 
\begin{align*}
\mathcal{H}' _{\sigma, v} 
= \bigl\{ 
&\{ x_1 \}, 
\{ x_1, x_2 \}, 
\{ x_1, x_2 , x_6 \}, 
\{ x_2 , x_6 \}, 
\{ x_2, x_6 , x_3 \}, \\
&\{ x_2, x_6 , x_3 , x_4 \}, 
\{ x_2, x_6 , x_3 , x_4, x_5 \}, 
\{ x_6 , x_3 , x_4, x_5 \}, \\
&\{ x_3 , x_4, x_5 \}, 
\{ x_3 , x_4, x_5, x_1 \},
\{ x_4, x_5, x_1 \}, 
\{ x_5, x_1 \}
\bigr\}, \\
\mathcal{H}'' _{\sigma, v} 
= \bigl\{ 
&\{ x_1 \}, 
\{ x_2 , x_6 \}, 
\{ x_3 , x_4, x_5 \}
\bigr\}. 
\end{align*}
\begin{figure}[htbp]
\centering
\includegraphics[width=6cm]{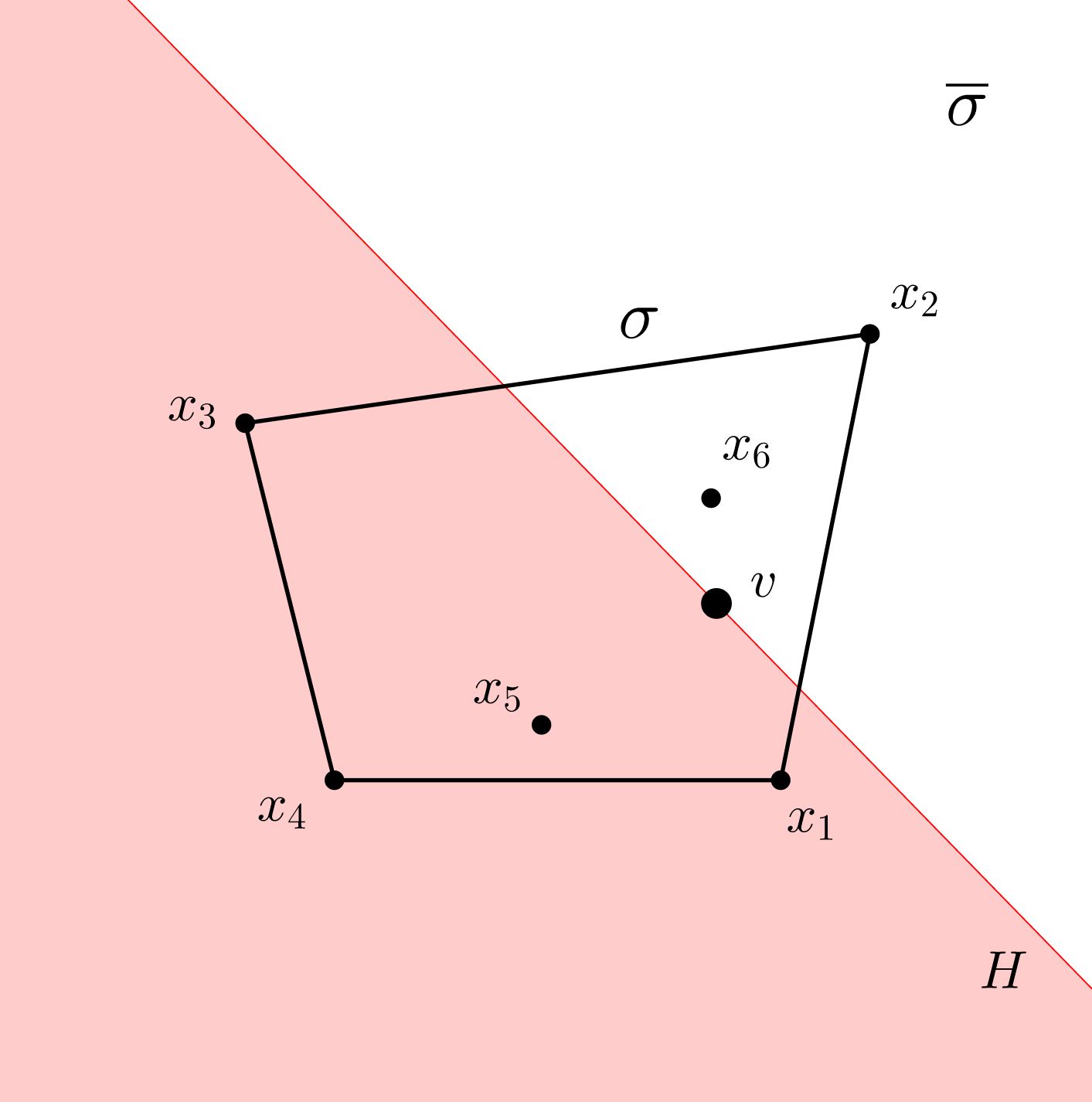}
\caption{$\operatorname{Im}(\nu) \cap \sigma = \{ x_1, \ldots , x_6 \}$.}
\label{fig:H}
\end{figure}
\end{ex}

\subsubsection{ $\operatorname{cpx}_{\sigma}(v)$, $r^{F} _{\sigma, v}$ and $s^{F} _{\sigma, v}$}\label{subsubsection:rs}
\begin{defi}
For a subset $F \subset L_{\mathbb{R}}$, 
we define $\operatorname{Cyc}_{\Gamma /L}(F) := \nu ^{-1} (F) \subset \operatorname{Cyc}_{\Gamma /L}$. 
\end{defi}

We fix $\sigma \in \operatorname{Facet}(P)$, $\Delta \in T_{\sigma}$ and $v \in V(\Delta)$. 
We pick $\operatorname{cpx}_{\sigma}(v) \in \mathbb{Z}_{> 0}$ and 
$r^{F} _{\sigma, v}, s^{F} _{\sigma, v} \in \mathbb{Z}_{\ge 0}$ for each $F \in \mathcal{H}'' _{\sigma, v}$ 
satisfying the following two conditions (R) and (S):
\begin{enumerate}
\item[(R)] 
Suppose that a function $a: \operatorname{Cyc}_{\Gamma /L}(\sigma) \to \mathbb{Z}_{\ge 0}$ satisfies 
\[
\sum _{q \in \operatorname{Cyc}_{\Gamma /L}(F)} a(q) \cdot w(q) > r^{F} _{\sigma, v}
\]
for any $F \in \mathcal{H}'' _{\sigma, v}$. 
Then, there exists a function $b: \operatorname{Cyc}_{\Gamma /L}(\sigma) \to \mathbb{Z}_{\ge 0}$ 
satisfying 
\begin{itemize}
\item 
$b^{-1}(\mathbb{Z}_{>0}) \subset a^{-1}(\mathbb{Z}_{>0})$, and

\item
$\sum _{q \in \operatorname{Cyc}_{\Gamma /L}(\sigma)} b(q) \cdot \mu(\langle q \rangle) = \operatorname{cpx}_{\sigma}(v) v$. 
\end{itemize}

\item[(S)] 
Suppose that a function $a: \operatorname{Cyc}_{\Gamma /L}(\sigma) \to \mathbb{Z}_{\ge 0}$ satisfies 
\[
\sum _{q \in \operatorname{Cyc}_{\Gamma /L}(F)} a(q) \cdot w(q) > s^{F} _{\sigma, v}
\]
for any $F \in \mathcal{H}'' _{\sigma, v}$. 
Then, there exists a function $b: \operatorname{Cyc}_{\Gamma /L}(\sigma) \to \mathbb{Z}_{\ge 0}$ 
satisfying 
\begin{itemize}
\item 
$b(q) \le a(q)$ holds for any $q \in \operatorname{Cyc}_{\Gamma /L}(\sigma)$, 

\item 
$\sum _{q \in \operatorname{Cyc}_{\Gamma /L}(\sigma)} b(q) \cdot \mu(\langle q \rangle) = \operatorname{cpx}_{\sigma}(v) v$, and 

\item
for any $q \in b^{-1}(\mathbb{Z}_{>0})$, 
there exists $q' \in \operatorname{Cyc}_{\Gamma /L}(\sigma)$ 
such that $b(q') < a(q')$ and $\operatorname{supp}(q) \subset \operatorname{supp}(q')$.
\end{itemize}
\end{enumerate}

\noindent
See Lemma \ref{lem:r} for the existence and a construction of $\operatorname{cpx}_{\sigma}(v)$, 
$r^{F} _{\sigma, v}$ and $s^{F} _{\sigma, v}$. 
These technical conditions will be used in the proof of Theorems \ref{thm:mod} and \ref{thm:gen}. 

\begin{lem}\label{lem:r}
Fix $\sigma \in \operatorname{Facet}(P)$, $\Delta \in T_{\sigma}$ and $v \in V(\Delta)$. 
Then, there exist $\operatorname{cpx}_{\sigma}(v) \in \mathbb{Z}_{> 0}$ and  
$r^{F} _{\sigma, v}, s^{F} _{\sigma, v} \in \mathbb{Z}_{\ge 0}$ for each $F \in \mathcal{H}'' _{\sigma, v}$ satisfying the conditions (R) and (S). 
\end{lem}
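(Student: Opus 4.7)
The plan is to exploit two finiteness facts that make the situation essentially combinatorial: $\operatorname{Cyc}_{\Gamma/L}(\sigma)$ is a finite set (the paper's definition of ``cycle'' requires the target vertices of the constituent edges to be distinct, so cycles are ordinary simple cycles in the finite graph $\Gamma/L$), and $\operatorname{Im}(\nu) \cap \sigma$ is a finite subset of $L_{\mathbb{Q}}$. Combined with Lemma \ref{lem:H} and condition $(\diamondsuit)_1$ (which guarantees $v \in L_{\mathbb{Q}}$), this reduces the proof to bookkeeping on finite data.

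I would first construct $\operatorname{cpx}_{\sigma}(v)$. Let $\mathcal{U}$ be the finite collection of subsets $U \subset \operatorname{Im}(\nu) \cap \sigma$ meeting every $F \in \mathcal{H}''_{\sigma,v}$. By Lemma \ref{lem:H}, for each $U \in \mathcal{U}$ we have $v \in \operatorname{conv}(U)$, so I can fix a representation $v = \sum_{u \in U} \lambda_u^U u$ with $\lambda_u^U \in \mathbb{Q}_{\ge 0}$ and $\sum_u \lambda_u^U = 1$. Let $D \in \mathbb{Z}_{>0}$ be a common multiple of the denominators of all finitely many $\lambda_u^U$, and let $W := \operatorname{LCM}\{w(q) : q \in \operatorname{Cyc}_{\Gamma/L}(\sigma)\}$. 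Set $\operatorname{cpx}_{\sigma}(v) := DW$.

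For condition (R) I would take $r^F_{\sigma,v} := 0$. Given $a$ satisfying the hypothesis, pick $q_F \in \operatorname{Cyc}_{\Gamma/L}(F) \cap a^{-1}(\mathbb{Z}_{>0})$ for each $F$, and set $U_a := \{\nu(q_F) : F \in \mathcal{H}''_{\sigma,v}\} \in \mathcal{U}$. For each $u \in U_a$ choose one $F_u$ with $\nu(q_{F_u}) = u$, and define $b(q_{F_u}) := DW \lambda_u^{U_a}/w(q_{F_u})$, with $b \equiv 0$ elsewhere. Since $D \lambda_u^{U_a} \in \mathbb{Z}_{\ge 0}$ and $w(q_{F_u}) \mid W$, these values lie in $\mathbb{Z}_{\ge 0}$; using $\mu(\langle q_{F_u} \rangle) = w(q_{F_u}) u$ gives $\sum_q b(q)\mu(\langle q\rangle) = DW \sum_{u \in U_a} \lambda_u^{U_a} u = \operatorname{cpx}_{\sigma}(v) v$, and the inclusion $b^{-1}(\mathbb{Z}_{>0}) \subset a^{-1}(\mathbb{Z}_{>0})$ is immediate.

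For condition (S) the new obstacle is enforcing $b \le a$ pointwise. I would set $s^F_{\sigma,v} := DW \cdot |\operatorname{Cyc}_{\Gamma/L}(F)| \cdot \max_{q \in \operatorname{Cyc}_{\Gamma/L}(\sigma)} w(q)$; then a pigeonhole argument on the finite set $\operatorname{Cyc}_{\Gamma/L}(F)$ produces, for each $F$, some $q_F^* \in \operatorname{Cyc}_{\Gamma/L}(F) \cap a^{-1}(\mathbb{Z}_{>0})$ with $a(q_F^*) > DW$. Running the construction above with these $q_F^*$ in place of $q_F$ yields $b(q_{F_u}^*) \le DW < a(q_{F_u}^*)$, so $b \le a$ pointwise and in fact $b(q) < a(q)$ at every $q \in b^{-1}(\mathbb{Z}_{>0})$; the final clause of (S) is then satisfied trivially by taking $q' := q$. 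The one genuine subtlety is that a single $\lambda^U$ must be fixed per $U$ once and for all, independently of $a$, and this is precisely what forces $\operatorname{cpx}_{\sigma}(v)$, $r^F_{\sigma,v}$ and $s^F_{\sigma,v}$ to depend only on $\sigma$ and $v$.
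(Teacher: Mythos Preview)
Your proof is correct. Both you and the paper set $r^F_{\sigma,v}=0$ and invoke Lemma~\ref{lem:H} to locate $v$ in the convex hull of one point per $F$; the essential differences lie in the choice of constants and in how condition~(S) is handled.

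The paper defines $\operatorname{cpx}_{\sigma}(v)$ as the \emph{minimum} integer with the required divisibility property, and it constructs $s^F_{\sigma,v}$ via a finer sum
\[
s^F_{\sigma,v}=\sum_{u\in\operatorname{Im}(\nu)\cap F}\;\sum_{d\in\operatorname{Len}(\{u\})}\bigl(m_{\sigma,v}(u)+d(\operatorname{num}(u,d)-1)\bigr),
\]
using separate bookkeeping over the distinct supports of cycles with a given $(\nu,w)$-value. To verify the last clause of~(S) the paper does not insist on $b(q)<a(q)$ at every $q$ with $b(q)>0$; instead it decrements $a$ by~$1$ on one representative per support class and then chooses $b$ below that decremented function, so that for each $q$ with $b(q)>0$ there is a witness $q'$ with the \emph{same} support and $b(q')<a(q')$. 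Your argument is more direct: you force $a(q_F^*)>DW\ge b(q_F^*)$ by a crude pigeonhole bound, which makes the clause trivial with $q'=q$.

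What each buys: your route is shorter and makes the existence statement transparent, but produces $\operatorname{cpx}_{\sigma}(v)=DW$ and $s^F_{\sigma,v}$ of order $DW\cdot|\operatorname{Cyc}_{\Gamma/L}(F)|\cdot\max w$, typically much larger than the paper's values. Since these invariants feed directly into $\alpha'_{\Delta,v}$, $\beta$, and ultimately the number of breadth-first-search steps in the algorithm (Corollary~\ref{cor:main}), the paper's tighter construction is what makes the method practical; it is also the specific construction referenced in Lemma~\ref{lem:rsineq}, Remark~\ref{rmk:ineq}(2), and throughout Section~\ref{section:ex}. One small point to make explicit in your write-up: the rationality of the $\lambda^U_u$ follows from $v\in L_{\mathbb{Q}}$ (condition~$(\diamondsuit)_1$) and $U\subset L_{\mathbb{Q}}$ via Carath\'eodory and linear algebra over~$\mathbb{Q}$.
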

\begin{proof}
We set $S_{\sigma, v} := \bigcup _{F \in \mathcal{H}'' _{\sigma, v}} F \subset \operatorname{Im}(\nu) \cap \sigma$. 

We define $\operatorname{cpx}_{\sigma}(v)$ as the minimum positive integer with the following condition. 
\begin{itemize}
\item 
If a subset $G \subset \operatorname{Cyc}_{\Gamma /L}(S_{\sigma, v})$ satisfies $v \in \operatorname{conv}( \nu(G) )$, 
then we have 
\[
\operatorname{cpx}_{\sigma}(v) v \in \sum _{q \in G} \mathbb{Z}_{\ge 0} \cdot \mu ( \langle q \rangle ). 
\]
\end{itemize}
\noindent
Such $\operatorname{cpx}_{\sigma}(v) \in \mathbb{Z}_{>0}$ exists 
since we have $v \in L_{\mathbb{Q}}$ by the assumed condition $(\diamondsuit)_1$ on $T_{\sigma}$, 
and the choice of $G$ has finitely many possibilities. 

Next, for $u \in S_{\sigma, v}$, 
we define $m_{\sigma, v}(u)$ as the minimum positive integer $\ell$ with the following condition. 
\begin{itemize}
\item
Let $G \subset S_{\sigma, v}$ be 
a set of $\mathbb{R}$-linear independent elements satisfying $u \in G$ and $v \in \operatorname{conv}(G)$. 
For $u' \in G$, we define $\operatorname{proj}_{G,u'}(v) \in \mathbb{R}_{\ge 0}$ by the unique expression 
$v = \sum _{u' \in G} \operatorname{proj}_{G,u'}(v) \cdot u'$. 
Then, we require $\ell$ to satisfy $\operatorname{cpx}_{\sigma}(v) \cdot \operatorname{proj}_{G,u}(v) \le \ell$. 
\end{itemize}

For $u \in \operatorname{Im}(\nu)$ and $d \in \mathbb{Z}_{>0}$, we define 
\begin{align*}
\operatorname{Len} (\{ u \}) 
&:= \bigl \{ w(q) \ \big| \ q \in \operatorname{Cyc}_{\Gamma /L} (\{ u \}) \bigr \}, \\
\operatorname{Cyc}_{\Gamma /L} ^d (\{ u \}) 
&:= \bigl \{ q \in \operatorname{Cyc}_{\Gamma /L} (\{ u \}) \ \big| \ w(q) = d \bigr \}, \\
\operatorname{num}(u,d) 
&:= \# \bigl\{ \operatorname{supp}(q) \ \big| \  q \in \operatorname{Cyc}_{\Gamma /L}^d (\{ u \}) \bigr \}. 
\end{align*}
Then, we define $s^{F} _{\sigma, v}$ by 
\[
s^{F} _{\sigma, v} := 
\sum_{u \in \operatorname{Im}(\nu) \cap F} 
\sum_{d \in \operatorname{Len} (\{ u \})}
\bigl( m_{\sigma, v}(u) + d (\operatorname{num}(u,d) - 1) \bigr).
\]
In what follows, we shall show that $\operatorname{cpx}_{\sigma}(v)$ and $s^{F} _{\sigma, v}$ satisfy condition (S). 

Suppose that a function $a: \operatorname{Cyc}_{\Gamma /L}(\sigma) \to \mathbb{Z}_{\ge 0}$ satisfies 
\[
\sum _{q \in \operatorname{Cyc}_{\Gamma /L}(F)} a(q) \cdot w(q) > s^{F} _{\sigma, v}
\]
for each $F \in \mathcal{H}'' _{\sigma, v}$. 
Note that 
\[
\sum _{q \in \operatorname{Cyc}_{\Gamma /L}(F)} a(q) \cdot w(q) = 
\sum_{u \in \operatorname{Im}(\nu) \cap F} 
\sum_{d \in \operatorname{Len} (\{ u \})}
\Biggl( d \cdot \sum _{q \in \operatorname{Cyc}_{\Gamma /L}^{d} (\{ u \})} a(q) \Biggr). 
\]
Therefore, for each $F$, we can pick $u_F \in \operatorname{Im}(\nu) \cap F$ and $d_F \in \operatorname{Len} (\{ u_F \})$ such that 
\[
d_F \cdot \sum _{q \in \operatorname{Cyc}_{\Gamma /L}^{d_F} (\{ u_F \})} a(q) 
> m_{\sigma, v}(u_F) + d_F \bigl( \operatorname{num}(u_F, d_F) -1 \bigr). 
\]
By Lemma \ref{lem:H}, we have
$v \in \operatorname{conv}\bigl( \{ u_F \mid F \in \mathcal{H}'' _{\sigma, v} \} \bigr)$. 
We take a minimal subset $I \subset \mathcal{H}'' _{\sigma, v}$ such that 
\[
v \in \operatorname{conv}\bigl( \{ u_{F} \mid F \in I \} \bigr). 
\]
Then by the minimality of $I$, the elements of $\{ u_{F} \mid F \in I \}$ are $\mathbb{R}$-linear independent. 
Therefore, we may uniquely write 
\[
\operatorname{cpx}_{\sigma}(v) v = \sum_{F \in I} c_F d_F u_{F} 
\]
with $c_F \in \mathbb{R}_{> 0}$. 
By the choice of $\operatorname{cpx}_{\sigma}(v)$, we have $c_F \in \mathbb{Z}_{> 0}$. 
Moreover, by the choice of $m_{\sigma, v}(u_{F})$, we have $c_F d_F \le m_{\sigma, v}(u_{F})$. 
In particular, we have 
\[
c_F \le \frac{m_{\sigma, v}(u_{F})}{d_F} 
< 1 - \operatorname{num}(u_F, d_F) + \sum _{q \in \operatorname{Cyc}_{\Gamma /L}^{d_F} (\{ u_F \})} a(q). 
\]
For each $F$, by the definition of $\operatorname{num}(u_F, d_F)$, we can take $q_1, \ldots , q_{\ell} \in  \operatorname{Cyc}_{\Gamma /L}^{d_F} (\{ u_F \})$ such that
\begin{itemize}
\item 
$\ell \le \operatorname{num}(u_F, d_F)$, 

\item 
$a(q_1), \ldots, a(q_{\ell}) > 0$, and 

\item 
$\bigl\{ \operatorname{supp}(q) \ \big| \  q \in \operatorname{Cyc}_{\Gamma /L}^{d_F} (\{ u_F \}),\ a(q)>0 \bigr \}
= \{ \operatorname{supp}(q_1), \ldots ,  \operatorname{supp}(q_{\ell}) \}$. 
\end{itemize} 
Then we define a function $f' _F: \operatorname{Cyc}_{\Gamma /L}^{d_F} (\{ u_F \}) \to \mathbb{Z}_{\ge 0} $ by 
\[
f'_F (q) := 
\begin{cases}
a(q)-1 & \text{if $q = q_i$ for some $i = 1, \ldots , \ell$,}\\
a(q) & \text{otherwise.}
\end{cases}
\]
Then, we have 
\begin{align*}\tag{i}
\bigl\{ \operatorname{supp}(q) \  & \big| \  q \in \operatorname{Cyc}_{\Gamma /L}^{d_F} (\{ u_F \}),\ f'_F(q)>0 \bigr \} \\
&\subset \{ \operatorname{supp}(q_1), \ldots ,  \operatorname{supp}(q_{\ell})  \} \\
&= \bigl\{ \operatorname{supp}(q) \ \big| \  q \in \operatorname{Cyc}_{\Gamma /L}^{d_F} (\{ u_F \}),\ a(q)>f'_F(q) \bigr \}. 
\end{align*}
Since we have
\begin{align*}
c_F 
& \le 
- \operatorname{num}(u_F, d_F) + \sum _{q \in \operatorname{Cyc}_{\Gamma /L}^{d_F} (\{ u_F \})} a(q) \\
& \le 
- \ell + \sum _{q \in \operatorname{Cyc}_{\Gamma /L}^{d_F} (\{ u_F \})} a(q) \\
& =
\sum _{q \in \operatorname{Cyc}_{\Gamma /L}^{d_F} (\{ u_F \})} f' _F(q), 
\end{align*}
we can take a function $f_F : \operatorname{Cyc}_{\Gamma /L}^{d_F} (\{ u_F \}) \to \mathbb{Z}_{\ge 0}$ satisfying the following two conditions:
\begin{itemize}
\item[(ii)] 
$\sum _{q \in \operatorname{Cyc}_{\Gamma /L}^{d_F} (\{ u_F \})} f_F(q) = c_F$. 

\item[(iii)] 
$f_F (q) \le f'_F(q)$ holds for any $q \in \operatorname{Cyc}_{\Gamma /L}^{d_F} (\{ u_F \})$. 
\end{itemize}
By (i) and (iii), we have  
\begin{align*}\tag{iv}
\bigl\{ \operatorname{supp}(q) \ &\big| \  q \in \operatorname{Cyc}_{\Gamma /L}^{d_F} (\{ u_F \}),\ f_F(q)>0 \bigr \} \\
&\subset \bigl\{ \operatorname{supp}(q) \ \big| \  q \in \operatorname{Cyc}_{\Gamma /L}^{d_F} (\{ u_F \}),\ a(q)>f_F(q) \bigr \}. 
\end{align*}
We define a function $b : \operatorname{Cyc}_{\Gamma /L}(\sigma) \to \mathbb{Z}_{\ge 0}$ by 
\[
b(q) := 
\begin{cases}
f_F(q) & \text{if $q \in \operatorname{Cyc}_{\Gamma /L}^{d_F} (\{ u_F \})$ for some $F \in I$,}\\
0 & \text{otherwise.}
\end{cases}
\]
Then, by (iii) and (iv), the function $b$ satisfies the following two conditions:
\begin{itemize}
\item 
$b(q) \le a(q)$ holds for any $q \in \operatorname{Cyc}_{\Gamma /L}(\sigma)$. 

\item 
For any $q \in b^{-1}(\mathbb{Z}_{>0})$, 
there exists $q' \in \operatorname{Cyc}_{\Gamma /L}(\sigma)$ 
such that $b(q') < a(q')$ and $\operatorname{supp}(q) = \operatorname{supp}(q')$ 
(in particular, $\operatorname{supp}(q) \subset \operatorname{supp}(q')$).
\end{itemize}
By (ii), we have 
\begin{align*}
\sum _{q \in \operatorname{Cyc}_{\Gamma /L}(\sigma)} b(q) \cdot \mu (\langle q \rangle) 
&= \sum _{F \in I} \sum _{q \in \operatorname{Cyc}_{\Gamma /L}^{d_F} (\{ u_F \})} f_F(q) \cdot \mu (\langle q \rangle) \\
&= \sum _{F \in I} \sum _{q \in \operatorname{Cyc}_{\Gamma /L}^{d_F} (\{ u_F \})} f_F(q) d_F u_F \\
&= \sum_{F \in I} c_F d_F u_{F} \\
&= \operatorname{cpx}_{\sigma}(v) v. 
\end{align*}
Therefore, this $b$ satisfies the condition (S). 

We define $r_{\sigma, v}^F := 0$ for any $F \in \mathcal{H}''_{\sigma , v}$. 
Then by the same argument as for $s_{\sigma, v}^F$, we can show that 
$\operatorname{cpx}_{\sigma}(v)$ and $r_{\sigma, v}^F$'s satisfy the condition (R).
\end{proof}

\subsubsection{$a^{F} _{\Delta, v} (v')$ and $h^F_{\Delta, v}$}\label{subsubsection:ah}
\begin{defi}\label{defi:ah}
For a subset $F \subset L_{\mathbb{R}}$ that consists of $\mathbb{R}$-linear independent $n$ elements, 
we define 
\[
H(F) := \left \{ \sum _{x \in F} a_x x \ \middle | \ \text{$a_x \in \mathbb{R}$ with $\sum_{x \in F} a_x \le 1$} \right \}. 
\]
This is the closed half-space $H$ of $L_{\mathbb{R}}$ satisfying $0 \in H$ and $F \subset \partial H := H \setminus \operatorname{relint}(H)$. 
\end{defi}

We fix $\sigma \in \operatorname{Facet}(P)$, $\Delta \in T_{\sigma}$, $v \in V(\Delta)$ and $F \in \mathcal{H}'' _{\sigma, v}$. 
For each $v' \in V(\Delta)$, 
we pick $a^{F} _{\Delta, v} (v') \in (0,1]$ with the following conditions: 
\begin{itemize}
\item[(A1)] 
$a^{F} _{\Delta, v} (v) \in (0,1)$. 

\item[(A2)]
$\operatorname{Im}(\nu) \setminus F \subset H \bigl( \bigl \{ a^{F} _{\Delta, v} (v') v' \ \big | \  v' \in V(\Delta) \bigr \} \bigr)$. 
\end{itemize}
See Lemma \ref{lem:a} for the existence of such $a^{F} _{\Delta, v} (v')$'s. 

We set
\[\tag{H}
h^F_{\Delta, v} := \max \bigl \{ \alpha \in \mathbb{R}_{\ge 0} \ \big | \ 
\alpha P \subset 
H \bigl( \bigl \{ a^{F} _{\Delta, v} (v') v' \mid v' \in V(\Delta) \bigr \} \bigr) \bigr \}. 
\]
We have $0 < h^F_{\Delta, v} \le a^{F} _{\Delta, v} (v) < 1$ since $a^{F} _{\Delta, v} (v') > 0$ for each $v' \in V(\Delta)$. 

Lemma \ref{lem:sep} below will be used in the proof of Lemma \ref{lem:a}. 
We prove Lemma \ref{lem:sep} following the proof of the hyperplane separation theorem (cf.\ \cite{HW}*{Theorem 1.17}). 
In the proof below, the finiteness of $F'$ and the assumption $(\diamondsuit) _2$ on 
the triangulation $T_{\sigma}$ will be important. 
\begin{lem}\label{lem:sep}
Fix $\sigma \in \operatorname{Facet}(P)$, $\Delta \in T_{\sigma}$ and $v \in V(\Delta)$. 
Let $F' \subset \operatorname{Im}(\nu) \cap \sigma$ be a subset satisfying $v \not \in \operatorname{conv} (F')$. 
Then, there exists a closed halfspace $H'$ of $\overline{\sigma}$ such that 
\[
v \not \in H', \quad \operatorname{relint}(\Delta) \cap H' = \emptyset, \quad F' \subset H'. 
\]
\end{lem}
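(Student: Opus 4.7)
The plan is to trade the half-space $H'$ for the affine functional defining it, and then to apply a Farkas-style theorem of the alternative.

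First, observe that any closed half-space of $\overline{\sigma}$ has the form $H' = \{x \in \overline{\sigma} \mid \ell(x) \ge 0\}$ for an affine functional $\ell \colon \overline{\sigma} \to \mathbb{R}$, and the three required properties of $H'$ translate to the inequalities
\begin{align*}
\ell(v) &< 0, \\
\ell(x) &\le 0 \quad \text{for every } x \in V(\Delta) \setminus \{v\}, \\
\ell(u) &\ge 0 \quad \text{for every } u \in F'.
\end{align*}
The first two lines force $\ell \le 0$ on $\Delta$ by convexity, and combined with $\ell(v) < 0$ they make $\{\ell = 0\} \cap \Delta$ a proper face of $\Delta$; hence $\ell < 0$ strictly on $\operatorname{relint}(\Delta)$, giving $\operatorname{relint}(\Delta) \cap H' = \emptyset$, while $v \notin H'$ and $F' \subset H'$ are immediate.

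Next, I would apply Motzkin's theorem of the alternative to this finite linear inequality system (finite precisely because $F'$ is a finite set, which is where the finiteness hypothesis enters). It tells us that either a feasible $\ell$ exists, or there are scalars $\gamma > 0$, $\alpha_x \ge 0$ for $x \in V(\Delta) \setminus \{v\}$, and $\beta_u \ge 0$ for $u \in F'$ with
\[
\gamma\,(v,1) + \sum_{x \in V(\Delta) \setminus \{v\}} \alpha_x\,(x,1) \;=\; \sum_{u \in F'} \beta_u\,(u,1)
\]
in $\overline{\sigma} \times \mathbb{R}$, the last coordinate encoding the affine slot.

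To exclude this alternative, I would invoke hypothesis $(\diamondsuit)_2$: the set $\Delta'' := \Delta \cap \operatorname{conv}(F')$ is a face of $\Delta$, and $v \notin \Delta''$ by the assumption $v \notin \operatorname{conv}(F')$. Equating the last coordinates of the displayed identity gives $B := \sum_u \beta_u = \gamma + \sum_x \alpha_x > 0$, and dividing the spatial components by $B$ produces a point
\[
p \;=\; \tfrac{\gamma}{B}\,v + \sum_{x} \tfrac{\alpha_x}{B}\,x \;=\; \sum_{u} \tfrac{\beta_u}{B}\,u
\]
that is simultaneously a convex combination of $V(\Delta)$ (with coefficient $\gamma/B > 0$ at $v$) and a convex combination of elements of $F'$. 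Thus $p \in \Delta \cap \operatorname{conv}(F') = \Delta''$. But $\Delta$ is a simplex, so the convex combination of $V(\Delta)$ expressing $p$ is unique, and $p \in \Delta''$ forces this representation to be supported on $V(\Delta'') \subset V(\Delta) \setminus \{v\}$, contradicting $\gamma/B > 0$. The main conceptual obstacle is recognizing that $(\diamondsuit)_2$ is the correct device here: a naive separation of $\{v\}$ from $\operatorname{conv}(F')$ need not avoid $\operatorname{relint}(\Delta)$, and some face-theoretic restriction on $\operatorname{conv}(F') \cap \Delta$ is essential. Once $(\diamondsuit)_2$ is in hand, the uniqueness of barycentric coordinates delivers the contradiction with no further geometric estimate.
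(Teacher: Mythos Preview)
Your proof is correct and takes a genuinely different route from the paper's. The paper argues by hand: it forms the difference polytope $Q=\operatorname{conv}(\{x_j-v_i\})$, observes that $0$ lies on its boundary (using $(\diamondsuit)_2$ once to get $\operatorname{relint}(\Delta)\cap\operatorname{conv}(F')=\emptyset$), takes a supporting hyperplane at the minimal face of $Q$ through $0$, and then uses $(\diamondsuit)_2$ a second time in an explicit index computation to upgrade the inequality at $v$ to a strict one. Your argument replaces all of this geometry with a single invocation of Motzkin's transposition theorem on the finite linear system for $\ell$, and then kills the infeasibility certificate with $(\diamondsuit)_2$ plus uniqueness of barycentric coordinates in the simplex $\Delta$. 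The two proofs invoke $(\diamondsuit)_2$ at essentially the same moment and for the same reason (to force the intersection $\Delta\cap\operatorname{conv}(F')$ to miss $v$), but your packaging is shorter and avoids the case split $\Delta\cap\operatorname{conv}(F')=\emptyset$ versus $\neq\emptyset$; the paper's version, on the other hand, is self-contained and does not appeal to a named theorem of the alternative.
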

\begin{proof}
Note that we have $\operatorname{relint}(\Delta) \cap \operatorname{conv} (F') = \emptyset$. 
Otherwise, we have $\Delta \subset \operatorname{conv} (F')$ by the assumption $(\diamondsuit)_2$, 
and it contradicts $v \not \in \operatorname{conv} (F')$. 
If $\Delta \cap \operatorname{conv} (F') = \emptyset$, then the assetion follows from 
the hyperplane separation theorem (cf.\ \cite{HW}*{Theorem 1.17}). 
In what follows, we assume that $\Delta \cap \operatorname{conv} (F') \not = \emptyset$. 

We identify $\overline{\sigma} = \mathbb{R}^{n-1}$, 
and $\langle \cdot , \cdot \rangle$ denotes the standard inner product on $\mathbb{R}^{n-1}$. 
Let $v_1 = v, v_2, \ldots, v_n$ be the vertices of $\Delta$, and 
let $x_1, \ldots , x_{\ell}$ be the elements of $F'$. 
For $1 \le i \le n$ and $1 \le j \le \ell$, we define $y_{ij} := x_j - v_i$. 
We set 
\[
Q := \operatorname{conv}\bigl( \{ y_{ij} \mid 1 \le i \le n, \ 1 \le j \le \ell \} \bigr). 
\]
Since $\Delta \cap \operatorname{conv} (F') \not = \emptyset$, we have $0 \in Q$. 
Furthermore, since $\operatorname{relint}(\Delta) \cap \operatorname{conv} (F') = \emptyset$, 
we have $0 \not \in \operatorname{relint}(Q)$. 
Let $G$ be the minimal face of $Q$ satisfying $0 \in G$. 
Then, there exists $u \in \mathbb{R}^{n-1} \setminus \{ 0 \}$ such that 
\[
Q \subset \left\{ z \in \mathbb{R}^{n-1} \ \middle | \  \langle z, u \rangle \ge 0 \right \}, \quad 
\left\{ z \in \mathbb{R}^{n-1} \ \middle | \  \langle z, u \rangle = 0 \right \} \cap Q = G. 
\]
We set $\alpha := \min _{1 \le j \le \ell} \langle x_j, u \rangle$. 
Then we have
\[\tag{1}
F' \subset \left\{ z \in \mathbb{R}^{n-1} \ \middle | \  \langle z, u \rangle \ge \alpha \right \}. 
\]
Since $Q \subset \left\{ z \in \mathbb{R}^{n-1} \ \middle | \  \langle z, u \rangle \ge 0 \right \}$, 
we have $\langle x_j, u \rangle \ge \langle v_i, u \rangle$ for any $1 \le i \le n$ and $1 \le j \le \ell$. 
Therefore, we have $\max _{1 \le i \le n} \langle v_i, u \rangle \le \alpha$, and hence, we have
\[\tag{2}
\Delta \subset \left\{ z \in \mathbb{R}^{n-1} \ \middle | \  \langle z, u \rangle \le \alpha \right \}.
\]

In what follows, we prove $\langle v, u \rangle < \alpha$. 
We define 
\[
I := \bigl\{ (i,j) \in \{ 1, \ldots , n\} \times \{ 1 , \ldots, \ell \} \ \big| \  y_{ij} \in G \bigr\}. 
\]
By the minimality of $G$, we have $0 \in \operatorname{relint}(G)$. 
Therefore, there exist $c_{ij} \in \mathbb{R}_{>0}$ for $(i,j) \in I$ such that
\[
\sum _{(i,j) \in I} c_{ij} y_{ij} = 0, \quad \sum _{(i,j) \in I} c_{ij} = 1. 
\]
Then, we have 
\[\tag{3}
\sum _{(i,j) \in I} c_{ij} v_i = \sum _{(i,j) \in I} c_{ij} x_j. 
\]
Let $\Delta '$ be the face of $\Delta$ such that $V(\Delta ') = \{ v_i \mid (i,j) \in I \}$. 
Then, by (3), we have $\operatorname{relint}(\Delta ') \cap \operatorname{conv}(F') \not = \emptyset$. 
Therefore, by $(\diamondsuit)_2$, we have $\Delta' \subset \operatorname{conv}(F')$. 
Since $v \not \in \operatorname{conv}(F')$, we can conclude that $v_1 = v \not \in V(\Delta ')$. 
Therefore, by the definition of $I$, we have $y_{11}, \ldots , y_{1\ell} \not \in G$, and hence, 
$\langle y_{1j}, u \rangle > 0$ for any $1 \le j \le \ell$. 
Thus, for $j'$ satisfying $\langle x_{j'}, u \rangle = \alpha$, we have
\[\tag{4}
\langle v_1, u \rangle = \langle x_{j'}, u \rangle - \langle y_{1j'}, u \rangle < \alpha. 
\]

By (1), (2) and (4), we conclude that
\[
H' := \left\{ z \in \mathbb{R}^{n-1} \ \middle | \  \langle z, u \rangle \ge \alpha \right \}
\]
satisfies the desired conditions. 
\end{proof}

\begin{lem}\label{lem:a}
Fix $\sigma \in \operatorname{Facet}(P)$, $\Delta \in T_{\sigma}$, $v \in V(\Delta)$ and $F \in \mathcal{H}''_{\sigma , v}$. 
Then, for each $v' \in V(\Delta)$, there exists $a^{F} _{\Delta, v} (v') \in (0,1]$ 
such that 
\begin{itemize}
\item[(A1)] $a^{F} _{\Delta, v} (v) \in (0,1)$, and

\item[(A2)]
$\operatorname{Im}(\nu) \setminus F \subset H \bigl( \bigl \{ a^{F} _{\Delta, v} (v') v' \ \big | \  v' \in V(\Delta) \bigr \} \bigr)$. 
\end{itemize}
\end{lem}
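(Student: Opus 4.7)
The plan is to construct $a^{F}_{\Delta,v}(v')$ by perturbing slightly the ``uniform'' choice $a^{F}_{\Delta,v}(v') = 1$. Let $u \in (L_{\mathbb{R}})^{*}$ denote the unique linear functional with $u \equiv 1$ on the affine hyperplane $\overline{\sigma}$; this exists because $0 \notin \overline{\sigma}$. Under the substitution $u_{a}(v') := 1/a^{F}_{\Delta,v}(v')$, the half-space $H\bigl(\bigl\{a^{F}_{\Delta,v}(v')\, v' \mid v' \in V(\Delta)\bigr\}\bigr)$ equals $\{x \in L_{\mathbb{R}} \mid u_{a}(x) \le 1\}$, so conditions (A1) and (A2) together amount to finding $u_{a} \in (L_{\mathbb{R}})^{*}$ with $u_{a}(v) > 1$, $u_{a}(v') \ge 1$ for every $v' \in V(\Delta)$, and $u_{a}(y) \le 1$ for every $y \in \operatorname{Im}(\nu) \setminus F$. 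I will take $u_{a} := u + \epsilon \phi$ for a suitable linear functional $\phi$ and a sufficiently small $\epsilon > 0$.

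Since $u \equiv 1$ on $\overline{\sigma}$ and $u < 1$ on $P \setminus \overline{\sigma}$, and since $\operatorname{Im}(\nu)$ is finite, the problem reduces to producing $\phi \in (L_{\mathbb{R}})^{*}$ satisfying $\phi(v) > 0$, $\phi(v') \ge 0$ for all $v' \in V(\Delta)$, and $\phi(y) \le 0$ for all $y \in F' := \operatorname{Im}(\nu) \cap \sigma \setminus F$: once such a $\phi$ is in hand, any sufficiently small $\epsilon > 0$ preserves the strict inequality $u_{a}(y) < 1$ at the finitely many $y \in \operatorname{Im}(\nu) \setminus \overline{\sigma}$, while the other required inequalities hold for any $\epsilon > 0$.

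To produce $\phi$, I will apply Lemma \ref{lem:sep} to $F'$. Its hypothesis $v \notin \operatorname{conv}(F')$ is exactly what the definition of $\mathcal{H}''_{\sigma,v}$ delivers: writing $F = \operatorname{Im}(\nu) \cap H_{F}$ for some $H_{F} \in \mathcal{H}_{\sigma, v}$, the set $F'$ lies in the open half-space $\overline{\sigma} \setminus H_{F}$, which is convex and misses $v \in \partial H_{F}$, so $\operatorname{conv}(F')$ does too. Lemma \ref{lem:sep} then supplies a closed half-space $H' = \{y \in \overline{\sigma} \mid \phi_{0}(y) \le t\}$ (for some $\phi_{0} \in (L_{\mathbb{R}})^{*}$ and $t \in \mathbb{R}$) with $F' \subset H'$, $v \notin H'$, and $\operatorname{relint}(\Delta) \cap H' = \emptyset$. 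Setting $\phi := \phi_{0} - t u$, these three conditions on $H'$ translate (using $u \equiv 1$ on $\overline{\sigma}$) directly into the three required properties of $\phi$; the property $\phi(v') \ge 0$ at a vertex $v' \in V(\Delta)$ uses the continuity of $\phi_{0}$ and the fact that $V(\Delta)$ is contained in the closure of $\operatorname{relint}(\Delta)$.

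I expect the main hurdle to be the verification $v \notin \operatorname{conv}(F')$, which is the step that exploits the minimality built into the definition of $\mathcal{H}''_{\sigma,v}$; indeed, the whole construction of $\mathcal{H}''_{\sigma,v}$ appears designed precisely to make this separation possible. Once Lemma \ref{lem:sep} is invoked, the remainder---translating $H'$ into $\phi$ and choosing $\epsilon$ small enough---is routine.
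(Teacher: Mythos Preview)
Your proof is correct and follows essentially the same strategy as the paper's: verify $v \notin \operatorname{conv}(F')$, invoke Lemma~\ref{lem:sep} to obtain the separating half-space $H'$ in $\overline{\sigma}$, and then tilt the supporting hyperplane $\overline{\sigma}$ slightly toward $v$ to capture the finitely many points of $\operatorname{Im}(\nu)\setminus\sigma$ while preserving the separation on $\sigma$. Your linear-functional formulation $u_a = u + \epsilon\phi$ is just a coordinate-free rephrasing of the paper's geometric construction $H(\ell,(1-\epsilon)v)$; one minor remark is that the verification $v\notin\operatorname{conv}(F')$ uses only that $F\in\mathcal{H}'_{\sigma,v}$, not the minimality encoded in $\mathcal{H}''_{\sigma,v}$.
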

\begin{proof}
We set $F' _{\sigma} := ( \operatorname{Im}(\nu) \cap \sigma ) \setminus F$. 
By the definition of $\mathcal{H}''_{\sigma , v}$, we have $v \not \in \operatorname{conv} (F' _{\sigma})$. 
Therefore, by Lemma \ref{lem:sep}, 
there exists a closed halfspace $H'$ of $\overline{\sigma}$ such that 
\[
v \not \in H', \quad
\operatorname{relint}(\Delta) \subset \overline{\sigma} \setminus H', \quad
F'_{\sigma} \subset H'. 
\]
Let $\ell := \partial (H') = H' \setminus \operatorname{relint}(H')$. 

\begin{figure}[htbp]
\centering
\includegraphics[width=7cm]{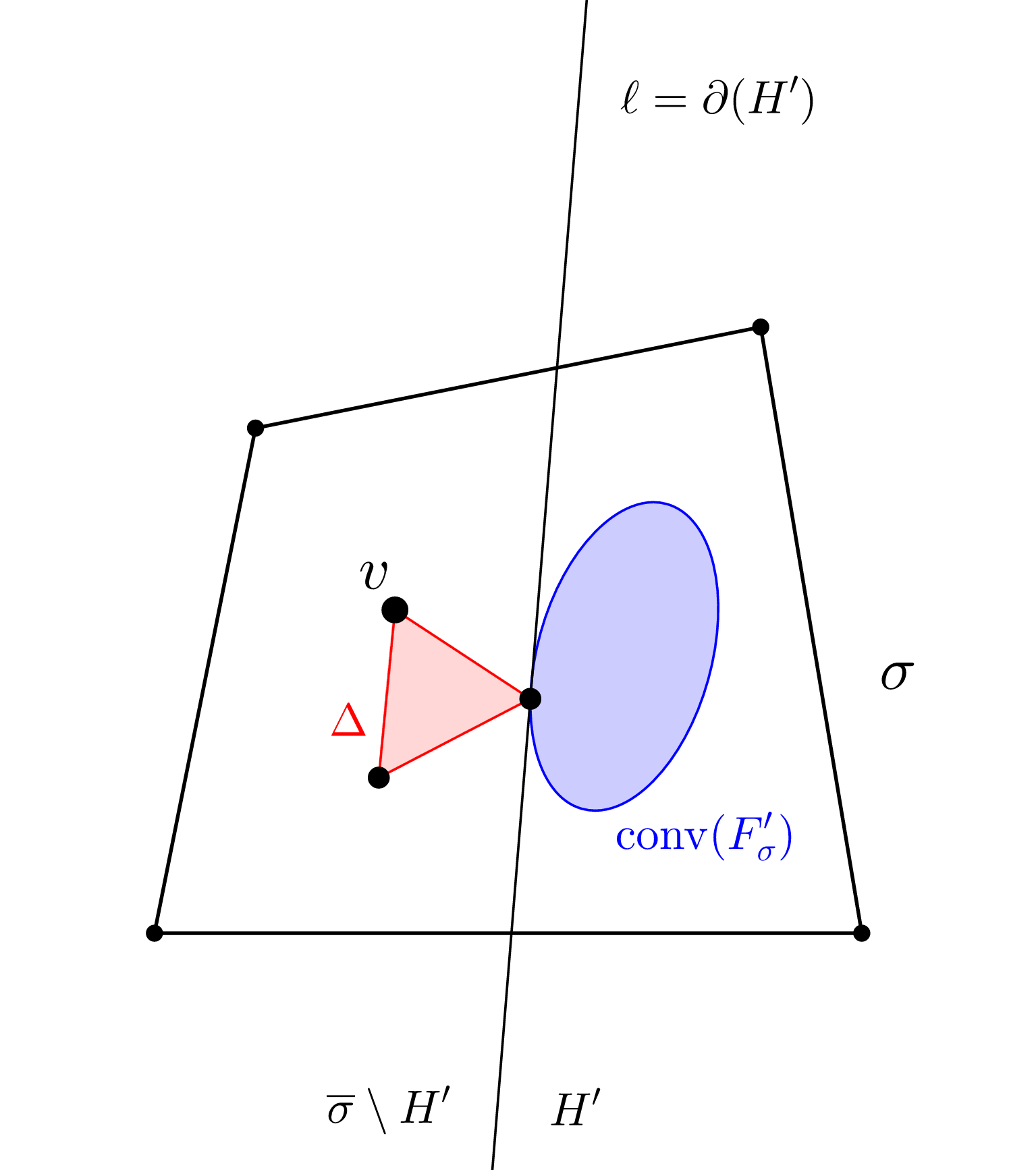}
\caption{$H'$ and $\ell$.}
\label{fig:H'}
\end{figure}

For $a \in \mathbb{R}_{>0}$, let $H(\ell, a v)$ denote
the closed halfspace of $L_{\mathbb{R}}$ uniquely determined by the following conditions: 
\[
0 \in H(\ell, a v), \quad 
\ell \subset \partial (H(\ell, av)), \quad 
av \in \partial (H(\ell, av)), 
\]
where $\partial (H(\ell, av)) := H(\ell, av) \setminus \operatorname{relint}(H(\ell, av))$. 
Note that $\partial (H(\ell, v)) = \overline{\sigma}$. 
Therefore, we have  
$\operatorname{Im}(\nu) \setminus \sigma \subset \operatorname{relint}(H(\ell, v))$. 
Since $\operatorname{Im}(\nu)$ is a finite set, 
there exists $\epsilon \in (0,1)$ such that 
\[
\operatorname{Im}(\nu) \setminus \sigma \subset H(\ell, (1-\epsilon) v). 
\]
For each $v' \in V(\Delta)$, we take $a(v') \in \mathbb{R}_{>0}$ so that 
\[
H(\ell, (1-\epsilon) v) = H \bigl( \{ a(v') v' \mid v' \in V(\Delta) \} \bigr). 
\]
This $a(v')$ can also be determined by $a(v')v' \in \partial (H(\ell, (1- \epsilon) v) )$. 
We have $a(v) = 1 - \epsilon \in (0,1)$. 
Note that on $\overline{\sigma}$, $\Delta$ is on the same side as $v$ with respect to the hyperplane $\ell$. 
Therefore, we have $a(v') \le 1$ for any $v' \in V(\Delta)$. 

Since $v \not \in H'$ and $1 - \epsilon < 1$, we have 
\[
H' = H(\ell, (1 - \epsilon ) v) \cap \overline{\sigma}. 
\]
Therefore, we have 
\begin{align*}
\operatorname{Im}(\nu) \setminus F 
&= (\operatorname{Im}(\nu) \setminus \sigma) \cup F' _{\sigma} \\
&\subset (\operatorname{Im}(\nu) \setminus \sigma) \cup H' \\
&\subset H(\ell, (1- \epsilon) v) = H \bigl( \{ a(v') v' \mid v' \in V(\Delta) \} \bigr). 
\end{align*}
Therefore, $a^{F} _{\Delta, v} (v') := a(v') \in (0,1]$ satisfies the desired conditions. 
\end{proof}

\subsubsection{$C'_2$, $\beta$ and $\operatorname{cpx}_{\Gamma}$}\label{subsubsection:beta}
We define 
\[
W := \max \{ w(e) \mid e \in E_{\Gamma} \}. 
\]
We have $W  < \infty$ by the definition of periodic graphs. 
We pick $C'_2 \in \mathbb{R}_{\ge 0}$ satisfying $C_2 \le C'_2$ (cf.\ Proposition \ref{prop:C2}(2)). 

Let $\sigma \in \operatorname{Facet}(P)$, $\Delta \in T_{\sigma}$ and $v \in V(\Delta)$. 
For $F \in \mathcal{H}'' _{\sigma, v}$, we set $\alpha ^{F} _{\Delta, v}, \alpha ^{\prime F} _{\Delta, v} \in \mathbb{R}_{\ge 0}$ by
\begin{align*}
\alpha ^{F} _{\Delta, v} 
:= \frac{a^{F} _{\Delta, v} (v)}{1-a^{F} _{\Delta, v} (v)}
\left( \frac{C_1}{h^F_{\Delta, v}} + C'_2 + \frac{1-h^F_{\Delta, v}}{h^F_{\Delta, v}} \bigl( r^{F} _{\sigma, v} + W ( \#(V_{\Gamma}/L) -1) \bigr) \right), \\
\alpha ^{\prime F} _{\Delta, v} 
:= \frac{a^{F} _{\Delta, v} (v)}{1-a^{F} _{\Delta, v} (v)}
\left( \frac{C_1}{h^F_{\Delta, v}} + C'_2 + \frac{1-h^F_{\Delta, v}}{h^F_{\Delta, v}} \bigl( s^{F} _{\sigma, v} + W(\#(V_{\Gamma}/L) -1) \bigr) \right). 
\end{align*}
Then, we define $\alpha _{\Delta, v}, \alpha ' _{\Delta, v} \in \mathbb{R}_{\ge 0}$ as follows. 
\[
\alpha _{\Delta, v} := \max _{F \in \mathcal{H}'' _{\sigma, v}} \alpha ^{F} _{\Delta, v}, \qquad
\alpha ' _{\Delta, v} := \max _{F \in \mathcal{H}'' _{\sigma, v}} \alpha ^{\prime F} _{\Delta, v}. 
\]
We define $\beta _{\Delta, v}, \beta' _{\Delta, v} \in \mathbb{R}_{\ge 0}$ by
\[
\beta_{\Delta, v} := \max \bigl\{ \alpha _{\Delta, v}, \alpha ' _{\Delta, v} - \operatorname{cpx}_{\sigma}(v) \bigr\}, \quad 
\beta' _{\Delta, v} := \beta_{\Delta, v} + \operatorname{cpx}_{\sigma}(v). 
\]
Furthermore, for $\Delta ' \in \operatorname{Face}(\Delta)$, we define
\[
\beta _{\Delta, \Delta'} := \sum_{v \in V(\Delta ')} \beta _{\Delta, v}.
\]
We also define $\beta \in \mathbb{R}_{\ge 0}$ by
\[
\beta := C' _2 + \max \bigl\{ \beta _{\Delta, \Delta} \ \big | \ \sigma \in \operatorname{Facet}(P),\ \Delta \in T_{\sigma} \bigr \}. 
\]

Let $\sigma \in \operatorname{Facet}(P)$, $\Delta \in T_{\sigma}$ and $\Delta ' \in \operatorname{Face}(\Delta)$. 
Then we define $\operatorname{cpx}_{\sigma}(\Delta ') \in \mathbb{Z}_{>0}$ by 
\[
\operatorname{cpx}_{\sigma}(\Delta ') := 
\operatorname{LCM}\{ \operatorname{cpx}_{\sigma}(v) \mid v \in V(\Delta ') \}. 
\]
We also define $\operatorname{cpx}_{\Gamma} \in \mathbb{Z}_{>0}$ by
\[
\operatorname{cpx}_{\Gamma} :=
\operatorname{LCM} \bigl\{ \operatorname{cpx}_{\sigma}(\Delta) \ \big | \ \sigma \in \operatorname{Facet}(P),\ \Delta \in T_{\sigma} \bigr \}. 
\]

\begin{rmk}\label{rmk:ineq}
\begin{enumerate}
\item 
The notations $\beta _{\Delta, \Delta '}$ and $\operatorname{cpx}_{\sigma}(\Delta ')$ for $\Delta ' \not = \Delta$ 
were not used to define $\beta$ and $\operatorname{cpx}_{\Gamma}$, but they will be used in Theorem \ref{thm:main}. 
The notation $\beta ' _{\Delta, v}$ will be used in Definition \ref{defi:M}. 

\item
When we use the invariants $\operatorname{cpx}_{\sigma}(v)$, $r_{\sigma, v}^F$ and $s_{\sigma, v}^F$ constructed in the proof of Lemma \ref{lem:r}, we have 
\[
\beta_{\Delta, v} = \alpha ' _{\Delta, v} - \operatorname{cpx}_{\sigma}(v)
\]
by Lemma \ref{lem:rsineq} below. 
\end{enumerate}
\end{rmk}

\begin{lem}\label{lem:rsineq}
Let $\sigma \in \operatorname{Facet}(P)$, $\Delta \in T_{\sigma}$, $v \in V(\Delta)$ and $F \in \mathcal{H}'' _{\sigma, v}$. 
Suppose that $\operatorname{cpx}_{\sigma}(v)$, $r_{\sigma, v}^F$ and $s_{\sigma, v}^F$ are the invariants constructed in the proof of Lemma \ref{lem:r}. 
Then, we have 
$\alpha ^{\prime F} _{\Delta, v} - \alpha ^{F} _{\Delta, v} \ge \operatorname{cpx}_{\sigma}(v)$, 
and hence, $\beta_{\Delta, v} = \alpha ' _{\Delta, v} - \operatorname{cpx}_{\sigma}(v)$. 
\end{lem}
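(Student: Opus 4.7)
The plan is to directly compute the difference $\alpha'^F_{\Delta,v} - \alpha^F_{\Delta,v}$ from the definitions and then interpret the resulting inequality geometrically. Writing $a := a^F_{\Delta,v}(v)$ and $h := h^F_{\Delta,v}$ for brevity, and using the fact that the construction in the proof of Lemma~\ref{lem:r} sets $r^F_{\sigma,v} = 0$, a short algebraic manipulation of the defining formulas yields
\[
\alpha'^F_{\Delta,v} - \alpha^F_{\Delta,v} = \frac{a(1-h)}{h(1-a)} \cdot s^F_{\sigma,v},
\]
so the task reduces to proving $\frac{a(1-h)}{h(1-a)}\,s^F_{\sigma,v} \ge \operatorname{cpx}_\sigma(v)$.

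Next I would re-express this ratio via the linear functional $\phi\colon L_{\mathbb{R}}\to\mathbb{R}$ that defines the half-space $H := H(\{a^F_{\Delta,v}(v')\,v' : v' \in V(\Delta)\}) = \{\phi \le 1\}$. Since each $a^F_{\Delta,v}(v')\,v'$ lies on $\partial H$, one has $\phi(v') = 1/a^F_{\Delta,v}(v')$; in particular $\phi(v) = 1/a$. Because $hP \subset H$ is maximal, $h = 1/\phi(u^*)$ for some $u^*$ attaining $\max_{p \in V(P)} \phi(p)$. Condition (A2) forces $\operatorname{Im}(\nu) \setminus F \subset H$, so $\phi(u^*) > 1$ forces $u^* \in F$; using $V(P) \cap \sigma = V(\sigma)$, we conclude $u^* \in V(\sigma) \cap F \subset \operatorname{Im}(\nu) \cap F$. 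Plugging $a = 1/\phi(v)$ and $h = 1/\phi(u^*)$ into the ratio gives $\frac{a(1-h)}{h(1-a)} = \frac{\phi(u^*)-1}{\phi(v)-1}$, so the goal becomes
\[
(\phi(u^*) - 1)\,s^F_{\sigma,v} \ge (\phi(v) - 1)\,\operatorname{cpx}_\sigma(v).
\]

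To finish, I would exhibit a linearly independent $G \subset S_{\sigma,v}$ with $u^* \in G$, $v \in \operatorname{conv}(G)$ and $\operatorname{proj}_{G,u^*}(v) \ge (\phi(v)-1)/(\phi(u^*)-1)$. Apply Lemma~\ref{lem:H} with $u_F := u^*$ and, for each other $F' \in \mathcal{H}''_{\sigma,v}$, choose $u_{F'} \in F' \setminus F$; this is available because the minimality of distinct $F, F' \in \mathcal{H}''_{\sigma,v}$ forces $F' \not\subset F$. Lemma~\ref{lem:H} then places $v$ in the convex hull of these representatives, all of which lie in $\{u^*\} \cup (\operatorname{Im}(\nu) \cap \sigma \setminus F) \subset S_{\sigma,v}$. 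A Carath\'eodory extraction inside the affine hyperplane $\overline{\sigma}$ produces an affinely independent $G$ with $v \in \operatorname{conv}(G)$; such a $G$ is automatically linearly independent in $L_{\mathbb{R}}$ and must contain $u^*$ (otherwise $v \in \operatorname{conv}(G) \subset H$, contradicting $\phi(v) > 1$). Expanding $v = \sum_{u \in G} \operatorname{proj}_{G,u}(v)\,u$, applying $\phi$ and subtracting $1$ from both sides, every term on the right except the $u^*$-term is $\le 0$ (since $\phi(u) \le 1$ for $u \in \operatorname{Im}(\nu) \setminus F$), giving the required projection bound. Then, by the definition of $m_{\sigma,v}(u^*)$ and the non-negativity of the remaining summands in $s^F_{\sigma,v}$,
\[
s^F_{\sigma,v} \ge m_{\sigma,v}(u^*) \ge \operatorname{cpx}_\sigma(v)\cdot\operatorname{proj}_{G,u^*}(v) \ge \operatorname{cpx}_\sigma(v)\cdot\frac{\phi(v)-1}{\phi(u^*)-1},
\]
completing the pointwise inequality, from which $\beta_{\Delta,v} = \alpha'_{\Delta,v} - \operatorname{cpx}_\sigma(v)$ follows by taking the maximum over $F$. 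The hardest part of the plan is the geometric extraction: guaranteeing $F' \not\subset F$ for distinct minimal $F, F'$, preserving $u^*$ through Carath\'eodory, and confirming that affine independence in $\overline{\sigma}$ upgrades to linear independence in $L_{\mathbb{R}}$.
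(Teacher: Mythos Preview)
Your proof is correct and follows the same overall skeleton as the paper's: both reduce to the inequality $\frac{a(1-h)}{h(1-a)}\,s^{F}_{\sigma,v}\ge\operatorname{cpx}_{\sigma}(v)$, both invoke Lemma~\ref{lem:H} to place $v$ in the convex hull of representatives $u_{F'}\in F'$, both extract a linearly independent set $G$ by minimality/Carath\'eodory, and both finish via $s^{F}_{\sigma,v}\ge m_{\sigma,v}(u)$ for a suitable $u\in F$. The execution, however, is genuinely different. The paper splits into the cases $v\in F$ and $v\notin F$; in the second case it takes an \emph{arbitrary} $v_F\in F$, writes $v=t\,v_F+(1-t)w$, and then proves $\frac{a(1-h)}{h(1-a)}\ge 1/t$ via a projective-geometry computation using Menelaus's theorem. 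You instead introduce the linear functional $\phi$ defining $H$, choose $u^{*}$ to be the vertex of $P$ maximizing $\phi$ (so that $h=1/\phi(u^{*})$ and $a=1/\phi(v)$ give $\frac{a(1-h)}{h(1-a)}=\frac{\phi(u^{*})-1}{\phi(v)-1}$ by pure algebra), and then obtain the projection bound $\operatorname{proj}_{G,u^{*}}(v)\ge\frac{\phi(v)-1}{\phi(u^{*})-1}$ directly from applying $\phi$ to the convex combination. This simultaneously handles both cases, eliminates the Menelaus step, and makes transparent why the specific choice of $u^{*}$ (forced into $F$ by condition~(A2)) is the right one. One cosmetic remark: the clause ``using $V(P)\cap\sigma=V(\sigma)$'' is not needed, since $u^{*}\in F\subset\operatorname{Im}(\nu)\cap\sigma$ already gives everything you use.
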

\begin{proof}
Note that we set $r_{\sigma, v}^F = 0$ in the construction. Therefore, it is sufficient to show the inequality
\[\tag{i}
\frac{a_{\Delta,v}^F(v)}{1-a^F _{\Delta,v}(v)} \cdot \frac{1 - h^F _{\Delta,v}}{h^F _{\Delta,v}} 
\cdot s_{\sigma, v}^F  \ge \operatorname{cpx}_{\sigma}(v). 
\]

When $v \in F$, we have $s_{\sigma, v}^F \ge m_{\sigma,v}(v) = \operatorname{cpx}_{\sigma}(v)$ by the construction. 
Since $h_{\Delta, v} ^F \le a_{\Delta, v} ^F (v) <1$, we obtain the desired inequality (i). 

In what follows, we assume $v \not\in F$. 
Take any $v_F \in F$. 
For each $F' \in \mathcal{H}'' _{\sigma, v} \setminus \{ F \}$, we take any $v_{F'} \in F' \setminus F$. 
Then by Lemma \ref{lem:H}, we have 
\[
v \in \operatorname{conv} \bigl( \{ v_{F'} \mid F' \in \mathcal{H}'' _{\sigma, v} \}  \bigr). 
\]
Take a minimal subset $I \subset \mathcal{H}'' _{\sigma, v}$ satisfying $v \in \operatorname{conv} \bigl( \{ v_{F'} \mid F' \in I \}  \bigr)$. 
Then, by the minimality of $I$, the elements of $\{ v_{F'} \mid F' \in I \}$ are $\mathbb{R}$-linear independent. 
Furthermore, by the definition of $\mathcal{H}' _{\sigma, v}$, we conclude $F \in I$ since $v_{F'} \not \in F$ holds for each $F' \in I \setminus \{ F \}$. 

Since  $v \in \operatorname{conv} \bigl( \{ v_{F'} \mid F' \in I \}  \bigr)$, 
there exist $w \in \operatorname{conv} \bigl( \bigl\{ v_{F'} \ \big | \  F' \in I \setminus \{ F \} \bigr\}  \bigr)$ and $t \in [0,1]$ 
such that 
\[
v = t v_F + (1-t) w. 
\]
We have $t \ne 1$ by the assumption $v \not\in F$. 
We also have $t \ne 0$ by the minimality of $I$. 
Therefore, we conclude that $t \in (0,1)$. 
Furthermore, by the construction of $s_{\sigma, v}^F$ in the proof of Lemma \ref{lem:r}, we have 
\[\tag{ii}
s_{\sigma, v}^F \ge m_{\sigma,v}(v_F) \ge t\operatorname{cpx}_{\sigma}(v). 
\]

Let $H := H \bigl( \bigl \{ a^{F} _{\Delta, v} (v') v' \ \big| \  v' \in V(\Delta) \bigr \} \bigr)$ be the half-space defined in Definition \ref{defi:ah}. 
By the conditions (A1) and (A2) on $a^{F} _{\Delta, v} (v')$'s in Subsection \ref{subsubsection:ah}, we have 
\[
v \not \in H, \qquad w \in \operatorname{conv} \bigl( (\operatorname{Im}(\nu) \cap \sigma ) \setminus F \bigr) \subset H. 
\]
Therefore, $\partial H$ intersects with the line through $v$ and $w$.
Let $x$ be the intersection point.  
Then the points $v_F$, $v$, $x$, and $w$ are on the line in this order. 
Let $b \in \mathbb{R}$ be the unique real number such that $bv_F \in \partial H$. 
Then, we have $h^F _{\Delta,v} \le b < 1$ since $v_F \not\in H$ and $h^F _{\Delta,v} v_F \in H$.
Therefore, we have
\begin{align*}\tag{iii}
	\frac{a_{\Delta,v}^F(v)}{1-a^F_{\Delta,v}(v)} \cdot \frac{1-h^F_{\Delta,v}}{h^F_{\Delta,v}}
	&\ge \frac{\left \lvert a_{\Delta,v}^F(v)v-0 \right \rvert}{\left \lvert v-a^F_{\Delta,v}(v)v \right \rvert} \cdot 
		\frac{\left \lvert v_F-bv_F \right \rvert}{\lvert bv_F-0 \rvert } \\
	&=\frac{\lvert v_F-x \rvert}{\lvert x-v \rvert} \\
	&\ge \frac{\lvert v_F-w \rvert}{\lvert w-v \rvert} = \frac{1}{t}. 
\end{align*}
Here, we used Menelaus's theorem for the first equality. 
From (ii) and (iii), we get the desired inequality (i). 
\end{proof}

\subsection{Invariants for $2$-dimensional periodic graphs}\label{subsection:2dim}
In general, it is not easy to find the invariants in Subsection \ref{subsection:inv}. 
However, it is easy for $2$-dimensional periodic graphs. 
In what follows, we assume $n = 2$.

Let $\sigma \in \operatorname{Facet}(P)$, and let $\{ v_1, \ldots , v_{\ell} \} = \operatorname{Im}(\nu) \cap \sigma$. 
Since $\dim \sigma = 1$, the points $v_1, \ldots , v_{\ell}$ determine the triangulation $T_{\sigma}$ of $\sigma$ satisfying 
$\bigcup _{\Delta \in T_{\sigma}} V(\Delta) = \{ v_1, \ldots , v_{\ell} \}$. 
Furthremore, since $\dim \sigma = 1$, this $T_{\sigma}$ satisfies the required conditions $(\diamondsuit)_1$ and $(\diamondsuit)_2$. 

Since the triangulation $T_{\sigma}$ is concretely given, 
$\operatorname{cpx}_{\sigma} (v)$ and $s_{\sigma, v} ^F$ can be easily computed according to the construction in the proof of Lemma \ref{lem:r}. 
We shall explain the construction of the invariant $a_{\sigma, v}^F$ below. 

Suppose that the points $v_1, \ldots , v_{\ell}$ are on $\sigma$ in this order as in Figure \ref{fig:2dim}. 
For $1 \le i \le \ell -1$, let $\Delta _i \in T_{\sigma}$ denote the $1$-simplex determined by $V(\Delta _i) = \{ v_i, v_{i+1} \}$. 
For $1 \le i \le \ell$, we set 
\[
F_i^- := \{ v_1, \ldots , v_i \}, \quad 
F_i^+ := \{ v_i, \ldots , v_{\ell} \}. 
\]
Then we have 
\[
\mathcal{H}''_{\sigma, v_i} = 
\begin{cases}
\bigl \{ F_i ^-  \bigr\} = \{ \{ v_1 \} \} & \text{(if $i = 1$)} \\
\bigl \{ F_i^-, F_i^+  \bigr \} & \text{(if $2 \le i \le \ell -1$)} \\
\bigl \{ F_i ^+ \bigr \} = \{ \{ v_{\ell} \} \} & \text{(if $i = \ell$)} 
\end{cases}. 
\]
By symmetry, it is sufficient to see the construction of 
$a_{\Delta _{i-1}, v_i} ^{F^+ _i}(v_{i-1})$ and $a_{\Delta _{i-1}, v_i} ^{F^+ _i}(v_{i})$ for $2 \le i \le \ell$,
and $a_{\Delta _{i}, v_i} ^{F^+ _i}(v_i)$ and $a_{\Delta _{i}, v_i} ^{F^+ _i}(v_{i+1})$ for $2 \le i \le \ell -1$. 
We fix $2 \le i \le \ell$. 
First, we define $t_1 \in (0,1)$ by
\[
t_1 := \max \Bigl \{ \epsilon,  \min \bigl\{ \alpha \in \mathbb{R}_{\ge 0} \ \big| \  
\operatorname{Im}(\nu) \setminus F^+ _i \subset H(v_{i-1}, \alpha v_i) \bigr \} \Bigr \}, 
\]
where $\epsilon$ is any real number satisfying $\epsilon \in (0,1)$. 
Then, $a^{F^+ _i} _{\Delta _{i-1}, v_i} (v_i) := t_1$ and $a^{F^+ _i} _{\Delta _{i-1}, v_i} (v_{i-1}) := 1$ satisfy 
the conditions (A1) and (A2) in Subsection \ref{subsubsection:ah} for $\Delta _{i-1}$ and $v_{i}$. 
When $i \le \ell -1$, we define $t_2 \in (0,1)$ to satisfy 
\[
H ( v_{i-1}, t_1 v_i ) = H ( t_1 v_{i}, t_2 v_{i+1} \bigr). 
\]
Then, $a^{F^+ _i} _{\Delta _{i}, v_i} (v_i) := t_1$ and $a^{F^+ _i} _{\Delta _{i}, v_i} (v_{i+1}) := t_2$ satisfy 
the conditions (A1) and (A2) in Subsection \ref{subsubsection:ah} for $\Delta _{i}$ and $v_{i}$. 

\begin{figure}[htbp]
\centering
\includegraphics[width=12cm]{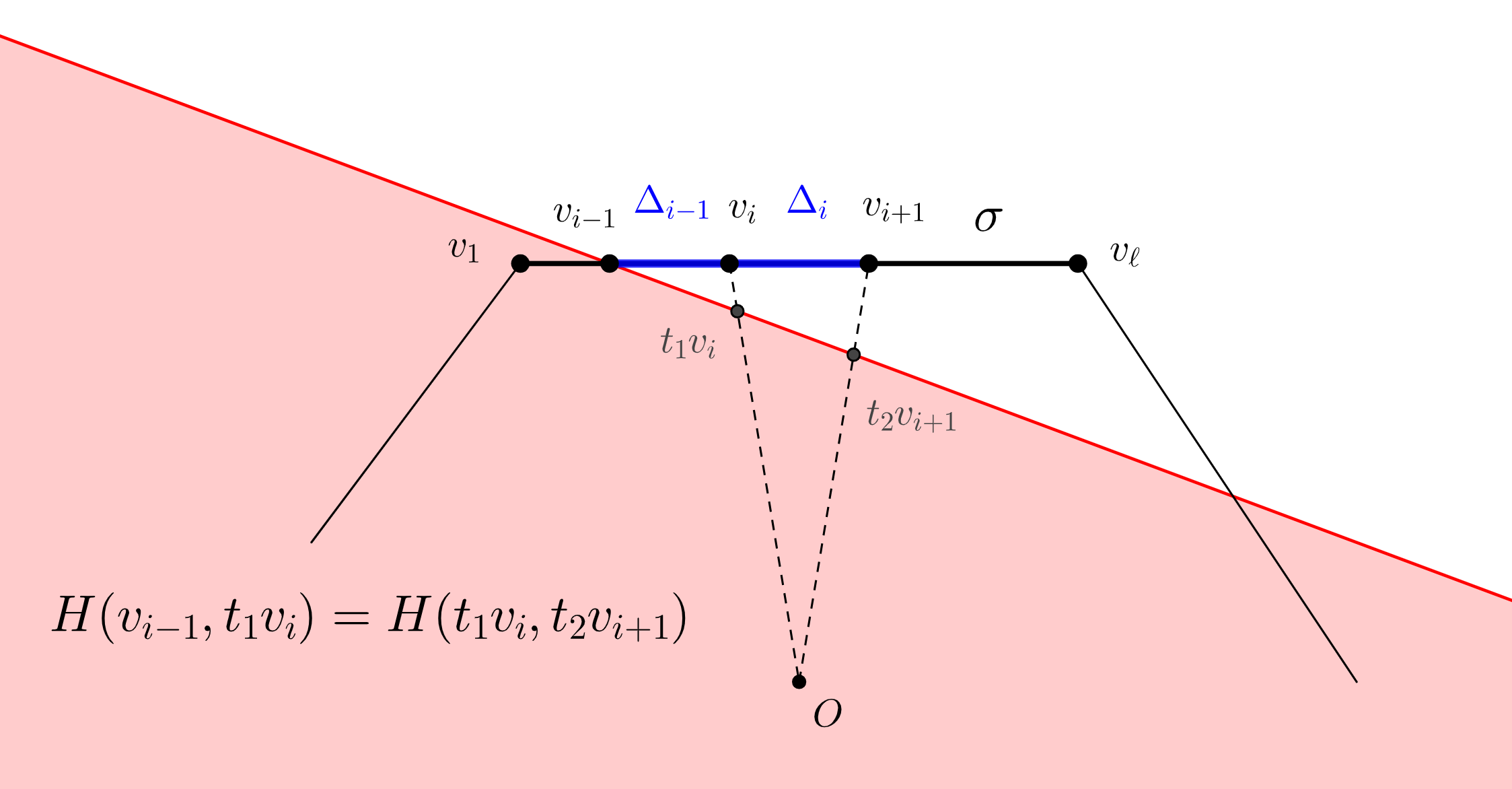}
\caption{$2$-dimensional case.}
\label{fig:2dim}
\end{figure}

\begin{ex}\label{ex:inv}
We shall calculate the invariants in Subsection \ref{subsection:inv} for the Wakatsuki graph and the start point $x_0 = v'_2$ in Figure \ref{fig:WG1}. 
See \cite{IN}*{Example 2.6} for the detailed definition of the Wakatsuki graph. 

\begin{figure}[htbp]
\begin{tabular}{cc}
\begin{minipage}{0.5\hsize}
\centering
\includegraphics[width=7.7cm]{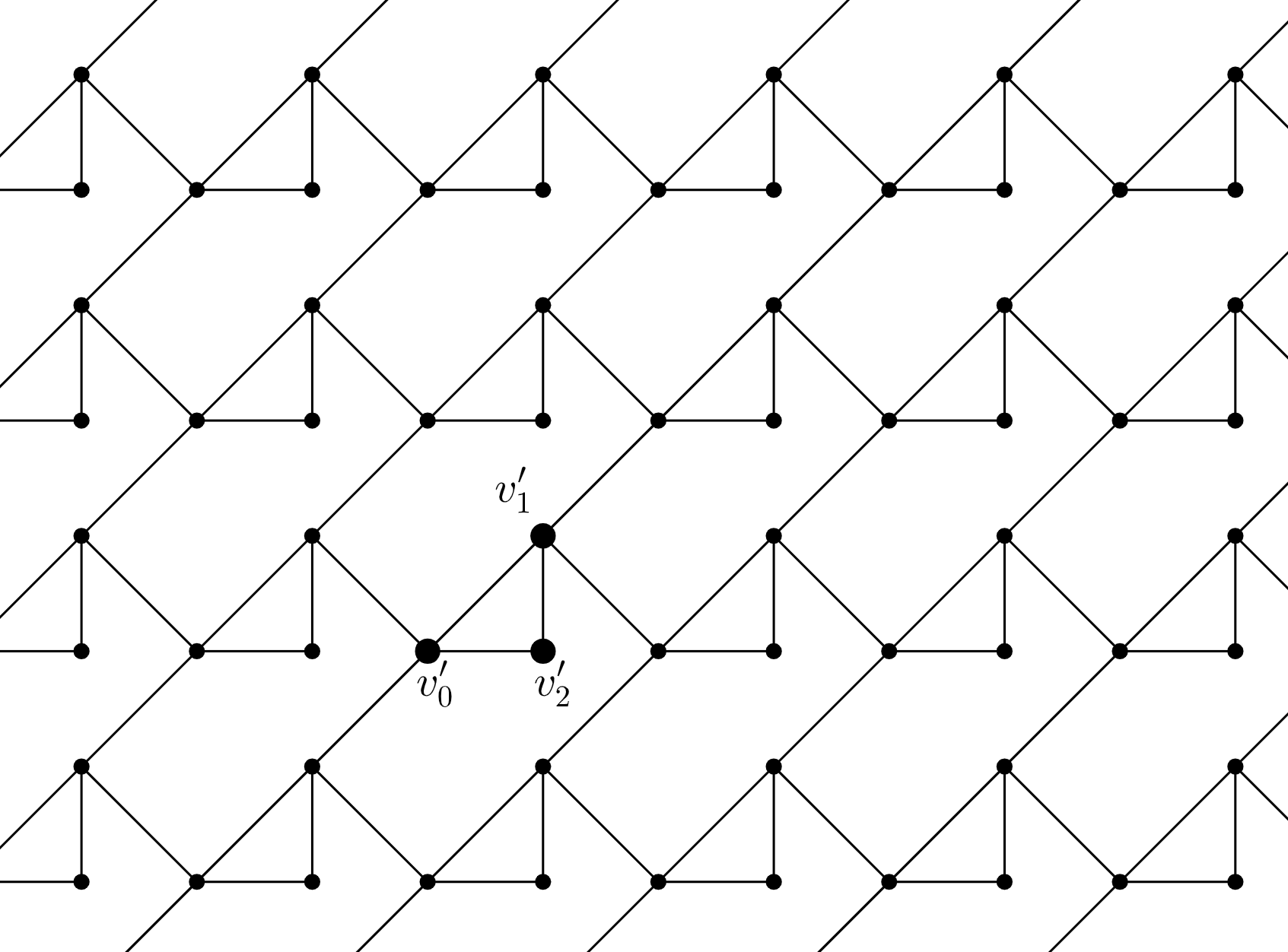}
\caption{The Wakatsuki graph $\Gamma$.}
\label{fig:WG1}
\end{minipage}&
\begin{minipage}{0.5\hsize}
\centering
\includegraphics[width=5.2cm]{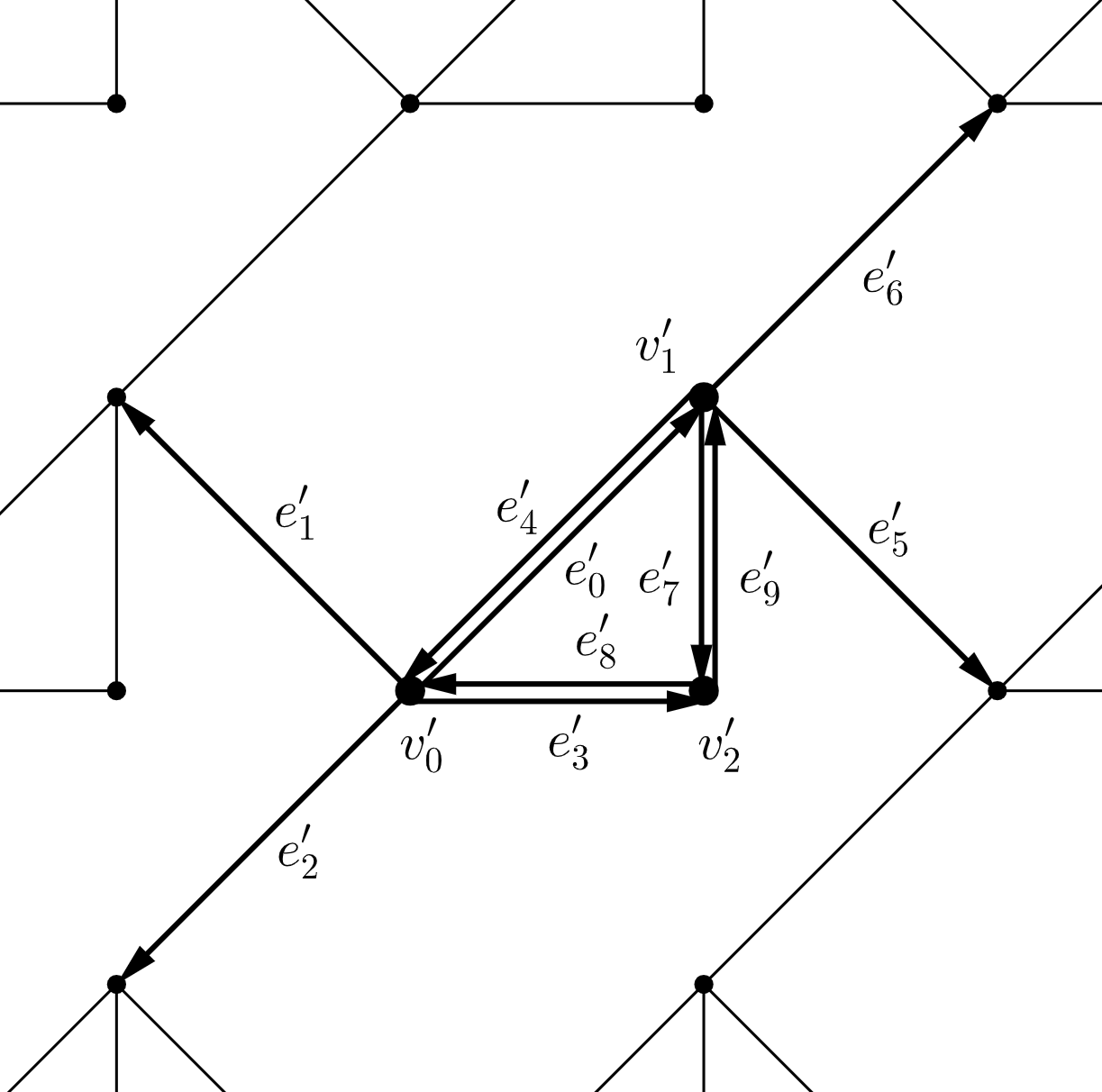}
\caption{$e'_0, \ldots, e'_9$.}
\label{fig:WG2}
\end{minipage}
\end{tabular}
\end{figure}

It is known that $C_1 = 1$ and we can take $C'_2 = 3$ (\cite{IN}*{Example A.3}). 
$\operatorname{Im}(\nu)$ consists of $11$ points as in Figure \ref{fig:inv} (\cite{IN}*{Example 2.22}). 

\begin{figure}[htbp]
\centering
\includegraphics[width=8cm]{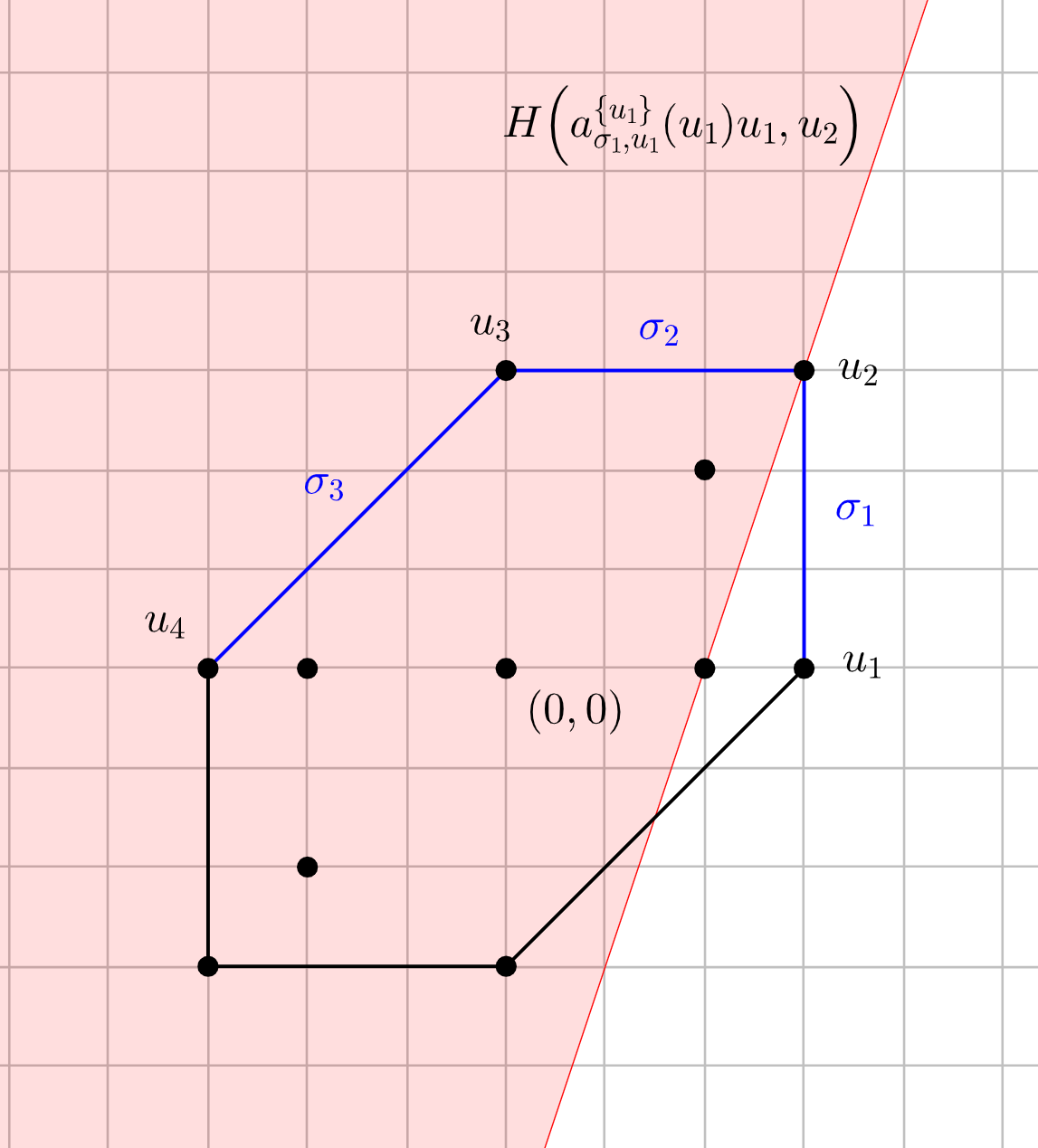}
\caption{$\sigma _1, \sigma _2, \sigma _3$, and $u_1, u_2, u_3, u_4$.}
\label{fig:inv}
\end{figure}

Let $\sigma _1, \sigma _2, \sigma _3 \in \operatorname{Facet}(P)$ and $u_1, u_2, u_3, u_4 \in V(P)$ as in Figure \ref{fig:inv}. 
Then, we have $\operatorname{cpx}_{\sigma}(v) = 2$ for any $v \in V(P)$ and $\sigma \in \operatorname{Facet}(P)$, 
and the values $s_{\sigma, v}^{\{v\}}$, $a_{\sigma, v}^{\{v\}}(v)$, $\beta _{\sigma, v}$ and 
$\beta _{\sigma, \sigma}$ are given by the following table. 
By symmetry, we have $\operatorname{cpx}_{\Gamma} = 2$ and $\beta = C'_2 + \max _{i = 1,2,3} \beta _{\sigma _i, \sigma _i} = 25$. 
\begin{table}[h]
  \centering
  \begin{tabular}{ccccccc}
    \hline
    $(\sigma, v)$ & $(\sigma _1, u_1)$ & $(\sigma _1, u_2)$ & $(\sigma _2, u_2)$ & $(\sigma _2, u_3)$ & $(\sigma _3, u_3)$ & $(\sigma _3, u_4)$   \\
    \hline \hline
    $s_{\sigma, v}^{\{v\}}$ & $2$ & $2$ & $2$ & $2$ & $2$ & $2$ \\
    $a_{\sigma, v}^{\{v\}}(v)$ & $\frac{2}{3}$ & $\frac{2}{3}$ & $\frac{2}{3}$ & $\frac{1}{2}$ & $\frac{1}{2}$ & $\frac{2}{3}$ \\
    $\beta _{\sigma, v}$ & $11$ & $11$ & $11$ & $7$ & $7$ & $11$ \\
    $\beta _{\sigma, \sigma}$ & \multicolumn{2}{c}{22} & \multicolumn{2}{c}{18} & \multicolumn{2}{c}{18} \\
    \hline
\end{tabular}
\vspace{4mm}
\caption{Invariants for the Wakatsuki graph.}
\label{table:WG}
\end{table}

We shall briefly explain the values 
$\operatorname{cpx}_{\sigma _1}(u_1)$, 
$s_{\sigma _1, u_1}^{\{ u_1 \}}$ and $a_{\sigma _1, u_1}^{\{ u_1 \}} (u_1)$. 
First, we can see that 
\[
\operatorname{Cyc}_{\Gamma /L}(\{ u_1 \})
= \left \{ \overline{e'_0}\ \overline{e'_5}, \ \overline{e'_5}\ \overline{e'_0} \right \}. 
\]
Therefore, we have $\operatorname{cpx}_{\sigma _1}(u_1) = \operatorname{LCM}\{ 2,2 \} = 2$. 
Since $\operatorname{supp}\left( \overline{e'_0}\ \overline{e'_5} \right) = 
\left \{ \overline{v'_0}, \overline{v'_1} \right \} =
\operatorname{supp}\left( \overline{e'_5}\ \overline{e'_0} \right)$, 
we have $\operatorname{num}(u_1, 2) = 1$, and therefore, 
we have $s_{\sigma _1, u_1}^{\{ u_1 \}} = \operatorname{cpx}_{\sigma _1}(u_1)  = 2$. 
Next, $a_{\sigma _1, u_1}^{\{ u_1 \}} (u_1)$ is defined as the minimum real number $\alpha$ satisfying
\[
\operatorname{Im}(\nu) \setminus \{ u_1 \} \subset H(\alpha u_1, u_2). 
\]
$H \left( \alpha u_1, u_2 \right)$ is illustrated in Figure \ref{fig:inv}. 
\end{ex}

\subsection{Main theorem}\label{subsection:main}
We keep the notations in Subsection \ref{subsection:inv}. 

\begin{defi}
Let $\sigma \in \operatorname{Facet}(P)$ and $\Delta \in T_{\sigma}$. 
For $x \in L_{\mathbb{R}}$ and $v \in V(\Delta)$, we define $\operatorname{proj}_{\Delta, v}(x) \in \mathbb{R}$ by the unique expression
$x = \sum _{v \in V(\Delta)} \operatorname{proj}_{\Delta, v}(x) v$. 
\end{defi}

The following lemma explains why we defined $\alpha _{\Delta, v}$ and $\alpha' _{\Delta, v}$ as in Subsection \ref{subsubsection:beta}. 

\begin{lem}\label{lem:rs}
Let $\sigma \in \operatorname{Facet}(P)$, $\Delta \in T_{\sigma}$ and $v \in V(\Delta)$. 
Let $y \in V_{\Gamma} \cap \Phi^{-1} (\mathbb{R}_{\ge 0} \Delta)$, 
and let $p$ be a shortest walk in $\Gamma$ from $x_0$ to $y$. 
By applying Lemma \ref{lem:bunkai}(1) to $\overline{p}$, 
we take a path $q_0$ in $\Gamma /L$ and a function $a : \operatorname{Cyc}_{\Gamma /L} \to \mathbb{Z}_{\ge 0}$ 
such that $\langle \overline{p} \rangle = \langle q_0 \rangle + \sum _{q \in \operatorname{Cyc}_{\Gamma /L}} a(q) \langle q \rangle$. 
Then the following assertions hold. 

\begin{enumerate}
\item
We fix any $F \in \mathcal{H}'' _{\sigma, v}$ and $t \in \mathbb{R}_{\ge 0}$. 
If $\operatorname{proj}_{\Delta, v} (\Phi(y)) > f^F(t)$ holds for
\[
f^F(t) := \frac{a^{F} _{\Delta, v} (v)}{1-a^{F} _{\Delta, v} (v)}
\left( \frac{C_1}{h^F_{\Delta, v}} + C'_2 + \frac{1-h^F_{\Delta, v}}{h^F_{\Delta, v}} \bigl( t + W ( \#(V_{\Gamma}/L) -1) \bigr) \right), 
\]
then we have
\[
\sum _{q \in \operatorname{Cyc}_{\Gamma /L}(F)} a(q) \cdot w(q) > t. 
\]

\item
In particular, if $\operatorname{proj}_{\Delta, v} (\Phi(y)) > \alpha _{\Delta, v}$ (resp.\ $\operatorname{proj}_{\Delta, v} (\Phi(y)) > \alpha' _{\Delta, v}$) holds, then we have 
\[
\sum _{q \in \operatorname{Cyc}_{\Gamma /L}(F)} a(q) \cdot w(q) > r^{F} _{\sigma, v} \qquad
(\text{resp.\ $\sum _{q \in \operatorname{Cyc}_{\Gamma /L}(F)} a(q) \cdot w(q) > s^{F} _{\sigma, v}$})
\]
for each $F \in \mathcal{H}'' _{\sigma, v}$. 
\end{enumerate}
\end{lem}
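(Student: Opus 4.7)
The plan is to prove (1) by contrapositive: assuming $T_F := \sum_{q \in \operatorname{Cyc}_{\Gamma/L}(F)} a(q) w(q) \le t$, I want to deduce $\operatorname{proj}_{\Delta, v}(\Phi(y)) \le f^F(t)$. The natural tool is the unique linear functional $\ell \colon L_{\mathbb{R}} \to \mathbb{R}$ characterized by $\ell(a^F_{\Delta, v}(v') v') = 1$ for every $v' \in V(\Delta)$; the closed half-space $H^F := H(\{a^F_{\Delta,v}(v')v' \mid v' \in V(\Delta)\})$ is then precisely $\{x \in L_{\mathbb{R}} \mid \ell(x) \le 1\}$. Condition (A2) yields $\ell(\nu(q)) \le 1$ whenever $\nu(q) \notin F$, while the defining property of $h^F_{\Delta,v}$ (namely $h^F_{\Delta,v} P \subset H^F$) gives $\ell(x) \le d_P(x)/h^F_{\Delta,v}$ for every $x \in L_{\mathbb{R}}$; in particular $\ell(\nu(q)) \le 1/h^F_{\Delta,v}$ for every cycle $q$, since $\nu(q) \in P$.

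Next, I would apply $\ell$ to the identity $\Phi(y) = \mu_\Phi(\langle q_0 \rangle) + \sum_{q} a(q) w(q) \nu(q)$ and split the cycle sum according to whether $\nu(q) \in F$. To control the $q_0$ term, lift $q_0$ to a walk $\tilde{q}_0$ in $\Gamma$ starting at $x_0$ via Lemma \ref{lem:bunkai}(3), so that $\mu_\Phi(\langle q_0 \rangle) = \Phi(t(\tilde{q}_0))$; then $d_P(\mu_\Phi(\langle q_0 \rangle)) \le w(q_0) + C_1$ by the definition of $C_1$, and $w(q_0) \le W \cdot (\#(V_\Gamma/L) - 1)$ because $q_0$ is a path in $\Gamma/L$. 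On the geometric side, the key step is the inequality
\[
\operatorname{proj}_{\Delta, v}(\Phi(y)) \cdot \frac{1 - a^F_{\Delta, v}(v)}{a^F_{\Delta, v}(v)} \le \ell(\Phi(y)) - d_P(\Phi(y)),
\]
which follows by writing $\Phi(y) = \sum_{v' \in V(\Delta)} \operatorname{proj}_{\Delta, v'}(\Phi(y)) v'$ with all coefficients nonnegative (since $\Phi(y) \in \mathbb{R}_{\ge 0} \Delta$), expanding $\ell(\Phi(y)) - d_P(\Phi(y)) = \sum_{v'} \operatorname{proj}_{\Delta,v'}(\Phi(y)) \cdot (1 - a^F_{\Delta,v}(v'))/a^F_{\Delta,v}(v')$, and discarding the nonnegative summands with $v' \ne v$; here $d_P(\Phi(y)) = \sum_{v'} \operatorname{proj}_{\Delta,v'}(\Phi(y))$ because $\mathbb{R}_{\ge 0} \Delta \subset \mathbb{R}_{\ge 0} \sigma$ and $\sigma$ lies on $\partial P$.

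Finally, I combine these estimates: $\ell(\Phi(y))$ is bounded above by the split at $F$, while $d_P(\Phi(y))$ is bounded below via $w(p) \le d_P(\Phi(y)) + C'_2$ together with $w(p) = w(q_0) + T_F + T'_F$, where $T'_F := \sum_{q:\, \nu(q) \notin F} a(q) w(q)$. The main obstacle is the careful bookkeeping: I rewrite the bounds on $\ell(\nu(q))$ for $\nu(q) \in F$ and on $\ell(\mu_\Phi(\langle q_0 \rangle))$ by separating a ``$1$'' part (multiplied respectively by $T_F$ and by $d_P(\mu_\Phi(\langle q_0 \rangle))$) from a $(1-h^F_{\Delta,v})/h^F_{\Delta,v}$ tail, so that the ``$1$'' parts combine with $T'_F$ and $d_P(\mu_\Phi(\langle q_0 \rangle))$ and cancel against $d_P(\Phi(y))$ through $w(p) \le d_P(\Phi(y)) + C'_2$; what remains is exactly $(1-h^F_{\Delta,v})/h^F_{\Delta,v}$ times $t + W(\#(V_\Gamma/L)-1)$ plus $C_1/h^F_{\Delta,v} + C'_2$, which is $f^F(t)$ up to the outer factor $a^F_{\Delta,v}(v)/(1 - a^F_{\Delta,v}(v))$. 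Part (2) is then immediate: $f^F(r^F_{\sigma,v}) = \alpha^F_{\Delta,v} \le \alpha_{\Delta,v}$ and $f^F(s^F_{\sigma,v}) = \alpha^{\prime F}_{\Delta,v} \le \alpha'_{\Delta,v}$, so applying (1) at these $t$-values and taking the maximum over $F \in \mathcal{H}''_{\sigma,v}$ yields both inequalities.
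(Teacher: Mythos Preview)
Your proof is correct and follows essentially the same route as the paper's. The only difference is presentational: you work with the linear functional $\ell$ determined by $\ell(a^F_{\Delta,v}(v')v')=1$, whereas the paper phrases the same estimates as set containments $\Phi(y)\in \lambda\,H\bigl(\{a^F_{\Delta,v}(v')v'\}\bigr)$ and $P\subset (h^F_{\Delta,v})^{-1}H$; the split into $w(q_0)$, $T_F$, $T'_F$, the bound $d_P(\mu_\Phi(\langle q_0\rangle))\le C_1+w(q_0)$, the use of $d_\Gamma(x_0,y)\le C'_2+d_P(\Phi(y))$, and the final subtraction isolating the $v$-term are all identical.
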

\begin{proof}
We prove (1). 
Let $F \in \mathcal{H}'' _{\sigma, v}$ and $t \in \mathbb{R}_{\ge 0}$. 
We assume 
\[
\sum _{q \in \operatorname{Cyc}_{\Gamma /L}(F)} a(q) \cdot w(q) \le t. 
\]

First, we have
\begin{itemize}
\item 
$\mu(\langle q \rangle) \in w(q) \cdot
H \bigl( \bigl \{ a^{F} _{\Delta, v} (v') v' \mid v' \in V(\Delta) \bigr \} \bigr)$ 
for $q \in \operatorname{Cyc}_{\Gamma /L} \setminus \operatorname{Cyc}_{\Gamma /L}(F)$, 

\item 
$\mu(\langle q \rangle) \in w(q) \cdot P$ for 
$q \in \operatorname{Cyc}_{\Gamma /L}$, and 

\item 
$\mu(\langle q_0 \rangle) \in (C_1 + w(q_0)) P$. 
\end{itemize}
For simplicity, we put 
\begin{align*}
\ell _0 &:= w(q_0), \\
\ell _1 &:= \sum _{q \in \operatorname{Cyc}_{\Gamma /L}(F)} a(q) \cdot w(q), \\
\ell _2 &:= \sum _{q \in \operatorname{Cyc}_{\Gamma /L} \setminus \operatorname{Cyc}_{\Gamma /L}(F)} 
a(q) \cdot w(q), \\
H' &:= H \bigl( \bigl \{ a^{F} _{\Delta, v} (v') v' \ \big | \ v' \in V(\Delta) \bigr \} \bigr). 
\end{align*}
Then, we have 
\[
\sum _{q \in \operatorname{Cyc}_{\Gamma /L}(F)} a(q) \cdot \mu(\langle q \rangle) 
\in \ell _1 P, \qquad
\sum _{q \in \operatorname{Cyc}_{\Gamma /L} \setminus \operatorname{Cyc}_{\Gamma /L}(F)} a(q) \cdot \mu(\langle q \rangle) 
\in \ell _2 H'. 
\]
Furthermore, we have
\begin{itemize}
\item
$\ell _0 + \ell _1 + \ell _2 = w(p) = d_{\Gamma}(x_0, y)$, 

\item
$\ell _0 \le W \cdot \operatorname{length}(q_0)  \le W(\#(V_{\Gamma}/L) -1)$, and

\item
$\ell _1 = 
\sum _{q \in \operatorname{Cyc}_{\Gamma /L}(F)} a(q) \cdot w(q) \le t$.
\end{itemize}
Therefore, we have
\begin{align*}
\Phi (y) &= \mu(\langle \overline{p} \rangle) \\
&= \mu(\langle q_0 \rangle)
+ \sum _{q \in \operatorname{Cyc}_{\Gamma /L}(F)} a(q) \cdot \mu(\langle q \rangle)+ 
\sum _{q \in \operatorname{Cyc}_{\Gamma /L} \setminus \operatorname{Cyc}_{\Gamma /L}(F)} a(q) \cdot \mu(\langle q \rangle) \\
&\in (C_1 + \ell _0)P + \ell _1 P + \ell _2 H'. 
\end{align*}
Since we have $P \subset (h^F _{\Delta, v})^{-1} H'$ by the choice of $h^F _{\Delta, v}$, we have
\[
\Phi (y) \in \bigl( (h^F _{\Delta, v})^{-1} ( C_1 + \ell _0 + \ell _1) + \ell _2 \bigr) H'. 
\]
Since we have
\begin{align*}
&(h^F _{\Delta, v})^{-1} ( C_1 + \ell _0 + \ell _1) + \ell _2 \\
&= (h^F _{\Delta, v})^{-1} C_1 + \bigl( (h^F _{\Delta, v})^{-1} - 1 \bigr) (\ell _0 + \ell _1) + d_{\Gamma}(x_0, y)\\
&\le (h^F _{\Delta, v})^{-1} C_1 + \bigl( (h^F _{\Delta, v})^{-1} - 1 \bigr) \bigl( t + W( \#(V_{\Gamma}/L) -1) \bigr) + d_{\Gamma}(x_0, y), 
\end{align*}
we have 
\[
\Phi (y) \in 
\Bigl( 
(h^F _{\Delta, v})^{-1} C_1 + \bigl( (h^F _{\Delta, v})^{-1} - 1 \bigr) 
\bigl( t + W(\#(V_{\Gamma}/L) -1) \bigr) + d_{\Gamma}(x_0, y)
\Bigr)H'. 
\]
Therefore, we have 
\[
\sum _{v' \in V(\Delta)} \frac{\operatorname{proj}_{\Delta, v'}(\Phi(y))}{a^F _{\Delta, v}(v')} \le 
\frac{C_1}{h^F _{\Delta, v}} + \frac{1 - h^F _{\Delta, v}}{h^F _{\Delta, v}} \bigl( t + W( \#(V_{\Gamma}/L) -1) \bigr) +  d_{\Gamma}(x_0, y). 
\]
On the other hand, by the definition of $C_2$, we have 
\[
d_{\Gamma}(x_0, y) \le C'_2 + d_{P, \Phi}(x_0, y) = 
C'_2 + \sum _{v' \in V(\Delta)} \operatorname{proj}_{\Delta, v'}(\Phi(y)). 
\]
Since $a^F _{\Delta, v}(v') \le 1$ holds for any $v' \in V(\Delta)$, we have
\begin{align*}
&\frac{1- a^F _{\Delta, v}(v)}{a^F _{\Delta, v}(v)} \operatorname{proj}_{\Delta, v}(\Phi(y)) \\
&\le
\left( \sum _{v' \in V(\Delta)} \frac{\operatorname{proj}_{\Delta, v'}(\Phi(y))}{a^F _{\Delta, v}(v')} \right) - 
\left( \sum _{v' \in V(\Delta)} \operatorname{proj}_{\Delta, v'}(\Phi(y)) \right) \\
&\le
\frac{C_1}{h^F _{\Delta, v}} + C'_2 + \frac{1 - h^F _{\Delta, v}}{h^F _{\Delta, v}} 
\bigl( t + W (\#(V_{\Gamma}/L) -1) \bigr). 
\end{align*}
Therefore, we have
\begin{align*}
\operatorname{proj}_{\Delta, v}(\Phi(y))
&\le \frac{a^F _{\Delta, v}(v)}{1- a^F _{\Delta, v}(v)}
	\left( \frac{C_1}{h^F _{\Delta, v}} + C'_2 + \frac{1 - h^F _{\Delta, v}}{h^F _{\Delta, v}} 
\bigl( t + W (\#(V_{\Gamma}/L) -1) \bigr) \right) \\
&= f^F(t), 
\end{align*}
which completes the proof of (1). 

(2) follows from (1) and the definitions of $\alpha _{\Delta, v}$ and $\alpha' _{\Delta, v}$. 
\end{proof}

\begin{thm}\label{thm:mod}
Let $\sigma \in \operatorname{Facet}(P)$, $\Delta \in T_{\sigma}$ and $v \in V(\Delta)$. 
Then for any $y \in V_{\Gamma} \cap \Phi ^{-1}( \mathbb{R}_{\ge 0} \Delta)$ with $\operatorname{proj}_{\Delta, v} (\Phi(y)) > \alpha _{\Delta, v}$, we have  
\[
d_{\Gamma}\bigl( x_0, y + \operatorname{cpx}_{\sigma}(v)v \bigr) 
\le d_{\Gamma}( x_0, y) + \operatorname{cpx}_{\sigma}(v).
\]
\end{thm}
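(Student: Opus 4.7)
My plan is to build an explicit walk from $x_0$ to $y + \operatorname{cpx}_\sigma(v)v$ by starting with a shortest walk to $y$ and inserting extra cycles whose combined $\mu$-image equals $\operatorname{cpx}_\sigma(v)v$. The crucial input is condition (R) from Subsection~\ref{subsubsection:rs}, whose point is exactly that once sufficiently many cycles over each $F \in \mathcal{H}''_{\sigma,v}$ are present in the decomposition of the shortest walk, one can add such a shifting packet of cycles without breaking the walkable structure.

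Concretely, I would take a shortest walk $p$ from $x_0$ to $y$ and apply Lemma~\ref{lem:bunkai}(1) to $\overline{p}$, obtaining a walkable pair $(q_0, a)$ with $\langle \overline{p}\rangle = \langle q_0 \rangle + \sum_q a(q) \langle q \rangle$ and $q_0$ a path from $\overline{x_0}$ to $\overline{y}$. The hypothesis $\operatorname{proj}_{\Delta,v}(\Phi(y)) > \alpha_{\Delta,v}$ combined with Lemma~\ref{lem:rs}(2) gives $\sum_{q \in \operatorname{Cyc}_{\Gamma/L}(F)} a(q) w(q) > r^F_{\sigma,v}$ for every $F \in \mathcal{H}''_{\sigma,v}$, so condition (R) furnishes a function $b : \operatorname{Cyc}_{\Gamma/L}(\sigma) \to \mathbb{Z}_{\ge 0}$ with $b^{-1}(\mathbb{Z}_{>0}) \subset a^{-1}(\mathbb{Z}_{>0})$ and $\sum_q b(q) \mu(\langle q \rangle) = \operatorname{cpx}_\sigma(v) v$. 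I would then verify that $(q_0, a+b)$ is walkable via Lemma~\ref{lem:bunkai}(2): an enumeration of $a^{-1}(\mathbb{Z}_{>0})$ witnessing the walkability of $(q_0, a)$ also witnesses that of $(q_0, a+b)$, since the supports of $a$ and $a+b$ coincide. Lemma~\ref{lem:bunkai}(3) then lifts the resulting walk $q'$ to a walk $\widetilde{q'}$ in $\Gamma$ starting at $x_0$.

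It remains to identify $t(\widetilde{q'})$ and $w(\widetilde{q'})$. Since $\langle q'\rangle - \langle \overline{p}\rangle = \sum_q b(q) \langle q \rangle$ lies in $H_1(\Gamma/L, \mathbb{Z})$ and $\mu_\Phi|_{H_1}$ agrees with $\mu$, one has $\mu_\Phi(\langle q'\rangle) = \Phi(y) + \operatorname{cpx}_\sigma(v) v = \Phi(y + \operatorname{cpx}_\sigma(v) v)$ (using $\operatorname{cpx}_\sigma(v)v \in L$ and periodicity of $\Phi$), and combined with $\overline{t(\widetilde{q'})} = \overline{y}$ and the injectivity of $\Phi$ on each $L$-orbit this forces $t(\widetilde{q'}) = y + \operatorname{cpx}_\sigma(v) v$. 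For the weight, since $b$ is supported on $\operatorname{Cyc}_{\Gamma/L}(\sigma)$, every $q$ with $b(q) > 0$ has $\nu(q) \in \sigma$, so applying the affine functional $\phi$ cutting out the facet $\sigma$ (normalized so that $\phi|_\sigma \equiv 1$) to the identity $\sum_q b(q) \mu(\langle q \rangle) = \operatorname{cpx}_\sigma(v) v$ yields $\sum_q b(q) w(q) = \operatorname{cpx}_\sigma(v)$. Hence $w(\widetilde{q'}) = w(p) + \sum_q b(q) w(q) = d_\Gamma(x_0, y) + \operatorname{cpx}_\sigma(v)$, giving the claimed bound.

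The main obstacle I expect is the walkability check for $(q_0, a+b)$: the inclusion $b^{-1}(\mathbb{Z}_{>0}) \subset a^{-1}(\mathbb{Z}_{>0})$ built into (R) is precisely what lets the added cycles inherit the support-connectivity of the cycles already present in the walkable decomposition of $\overline{p}$, so that the combined packet assembles into a single walk in $\Gamma/L$. Once this is granted, the endpoint identification via $\mu_\Phi$ and the weight identity via the facet functional are routine.
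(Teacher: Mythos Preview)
Your proposal is correct and follows essentially the same route as the paper's proof: decompose a shortest walk via Lemma~\ref{lem:bunkai}(1), invoke Lemma~\ref{lem:rs}(2) and condition~(R) to obtain $b$, use $b^{-1}(\mathbb{Z}_{>0}) \subset a^{-1}(\mathbb{Z}_{>0})$ together with Lemma~\ref{lem:bunkai}(2) to see that $(q_0,a+b)$ is walkable, and then read off the endpoint and weight of the lifted walk. The paper states the endpoint and weight identities slightly more tersely (writing $t(p') = t(p) + \sum_q b(q)\mu(\langle q\rangle)$ directly), but your justification via $\mu_\Phi$ and the facet functional is the same computation made explicit.
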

\begin{proof}
Suppose that  $y \in V_{\Gamma} \cap \Phi ^{-1}( \mathbb{R}_{\ge 0} \Delta)$ satisfies $\operatorname{proj}_{\Delta, v} (\Phi(y)) > \alpha _{\Delta, v}$. 
Let $p$ be a shortest walk in $\Gamma$ from $x_0$ to $y$. 
By applying Lemma \ref{lem:bunkai}(1) to $\overline{p}$, 
we can take a path $q_0$ in $\Gamma /L$ and a function $a : \operatorname{Cyc}_{\Gamma /L} \to \mathbb{Z}_{\ge 0}$ 
such that $\langle \overline{p} \rangle = \langle q_0 \rangle + \sum _{q \in \operatorname{Cyc}_{\Gamma /L}} a(q) \langle q \rangle$. 
Then, by Lemma \ref{lem:rs}, we have 
\[
\sum _{q \in \operatorname{Cyc}_{\Gamma /L}(F)} a(q) \cdot w(q) > r^{F} _{\sigma, v}
\]
for each $F \in \mathcal{H}'' _{\sigma, v}$. 
Then, by the condition (R) in Subsection \ref{subsubsection:rs}, 
there exists a function $b: \operatorname{Cyc}_{\Gamma /L}(\sigma) \to \mathbb{Z}_{\ge 0}$ such that 
$b^{-1}(\mathbb{Z}_{>0}) \subset a^{-1}(\mathbb{Z}_{>0})$ and that 
\[
\sum _{q \in \operatorname{Cyc}_{\Gamma /L}(\sigma)} b(q) \cdot \mu(\langle q \rangle) = \operatorname{cpx}_{\sigma}(v) v.
\]
Since any $q \in \operatorname{Cyc}_{\Gamma /L}(\sigma)$ satisfies $\frac{\mu(\langle q \rangle)}{w(q)} \in \sigma$, 
we also have 
\[
\sum _{q \in \operatorname{Cyc}_{\Gamma /L}(\sigma)} b(q) \cdot w(q) = \operatorname{cpx}_{\sigma}(v). 
\]
We define a function $a' : \operatorname{Cyc}_{\Gamma /L} \to \mathbb{Z}_{\ge 0}$ by 
\[
a'(q) := 
\begin{cases}
a(q) + b(q) & \text{if $q \in \operatorname{Cyc}_{\Gamma /L}(\sigma)$,} \\
a(q)  & \text{if $q \in \operatorname{Cyc}_{\Gamma /L} \setminus \operatorname{Cyc}_{\Gamma /L}(\sigma)$}. 
\end{cases}
\]
Then, we have $a^{\prime -1}(\mathbb{Z}_{>0}) = a^{-1}(\mathbb{Z}_{>0})$ since $b^{-1}(\mathbb{Z}_{>0}) \subset a^{-1}(\mathbb{Z}_{>0})$. 
Therefore, $(q_0, a')$ is also a walkable pair by Lemma \ref{lem:bunkai}(2). 
Hence, there exists a walk $p'$ in $\Gamma$ with $s(p') = x_0$ such that 
\[
\langle \overline{p'} \rangle = \langle q_0 \rangle + \sum _{q \in \operatorname{Cyc}_{\Gamma /L}} a'(q) \cdot \langle q \rangle
= \langle \overline{p} \rangle + \sum _{q \in \operatorname{Cyc}_{\Gamma /L}(\sigma)} b(q) \cdot \langle q \rangle. 
\]
Hence, we have 
\begin{align*}
w(p') 
&= w(p) + \sum _{q \in \operatorname{Cyc}_{\Gamma /L}(\sigma)} b(q) \cdot w(q) 
= d_{\Gamma}( x_0, y) + \operatorname{cpx}_{\sigma}(v), \\
t(p') 
&= t(p) + \sum _{q \in \operatorname{Cyc}_{\Gamma /L}(\sigma)} b(q) \cdot \mu(\langle q \rangle) 
= y + \operatorname{cpx}_{\sigma}(v)v. 
\end{align*}
Therefore, $p'$ is a walk from $x_0$ to $y + \operatorname{cpx}_{\sigma}(v)v$ of weight 
equal to $d_{\Gamma}( x_0, y) + \operatorname{cpx}_{\sigma}(v)$, which shows the desired inequality. 
\end{proof}

\begin{thm}\label{thm:gen}
Let $\sigma \in \operatorname{Facet}(P)$, $\Delta \in T_{\sigma}$ and $v \in V(\Delta)$. 
Then for any $y \in V_{\Gamma} \cap \Phi ^{-1}(\mathbb{R}_{\ge 0} \Delta)$ with $\operatorname{proj}_{\Delta, v} (\Phi(y)) > \alpha ' _{\Delta, v}$, we have  
\[
d_{\Gamma}\bigl( x_0, y - \operatorname{cpx}_{\sigma}(v)v \bigr) 
\le d_{\Gamma}( x_0, y) - \operatorname{cpx}_{\sigma}(v). 
\]
\end{thm}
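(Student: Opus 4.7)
The plan is to adapt the proof of Theorem \ref{thm:mod} in ``reverse'': rather than inserting $\operatorname{cpx}_\sigma(v)$ worth of cycles around $v$ into a walk from $x_0$ to $y$, I will remove them from a shortest such walk to obtain a walk to $y - \operatorname{cpx}_\sigma(v) v$ of weight $d_\Gamma(x_0, y) - \operatorname{cpx}_\sigma(v)$. The invariants $\alpha'_{\Delta, v}$ and $s^F_{\sigma, v}$ together with condition (S) were designed precisely to make this removal legal.

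I first take a shortest walk $p$ from $x_0$ to $y$ and apply Lemma \ref{lem:bunkai}(1) to write $\langle \overline{p} \rangle = \langle q_0 \rangle + \sum_{q \in \operatorname{Cyc}_{\Gamma/L}} a(q) \langle q \rangle$. The hypothesis $\operatorname{proj}_{\Delta, v}(\Phi(y)) > \alpha'_{\Delta, v}$ combined with Lemma \ref{lem:rs}(2) gives
\[
\sum_{q \in \operatorname{Cyc}_{\Gamma/L}(F)} a(q) \cdot w(q) > s^F_{\sigma, v}
\qquad \text{for every } F \in \mathcal{H}''_{\sigma, v}.
\]
Condition (S) from Subsection \ref{subsubsection:rs} then supplies a function $b : \operatorname{Cyc}_{\Gamma/L}(\sigma) \to \mathbb{Z}_{\ge 0}$ with $b \le a$ pointwise, with $\sum b(q) \mu(\langle q \rangle) = \operatorname{cpx}_\sigma(v) v$, and, crucially, with the ``support-covering'' clause: every $q$ having $b(q) > 0$ admits some $q'$ with $b(q') < a(q')$ and $\operatorname{supp}(q) \subseteq \operatorname{supp}(q')$.

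I set $a'' := a - b$, which is pointwise nonnegative. If I can show that $(q_0, a'')$ is walkable, then, exactly as in the proof of Theorem \ref{thm:mod}, a lift yields a walk $p'$ in $\Gamma$ with $s(p') = x_0$ and $\langle \overline{p'} \rangle = \langle q_0 \rangle + \sum a''(q) \langle q \rangle$; using $\nu(q) = \mu(\langle q \rangle)/w(q) \in \sigma$ to convert the vector identity $\sum b(q) \mu(\langle q \rangle) = \operatorname{cpx}_\sigma(v) v$ into the weight identity $\sum b(q) w(q) = \operatorname{cpx}_\sigma(v)$, one then concludes $w(p') = w(p) - \operatorname{cpx}_\sigma(v)$ and $t(p') = y - \operatorname{cpx}_\sigma(v) v$, which is exactly the desired inequality.

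The main obstacle is verifying the walkability of $(q_0, a'')$ via Lemma \ref{lem:bunkai}(2). The walkability of $(q_0, a)$ yields an ordering $q_1, \ldots, q_m$ of $a^{-1}(\mathbb{Z}_{>0})$ chaining together via $\operatorname{supp}(q_0)$, but naively restricting to the indices with $a'' > 0$ can destroy chain-connectivity whenever a removed cycle served as an essential bridge. Here is precisely where the support-covering clause of (S) is used. I consider the intersection graph $G$ on $\{q_0\} \cup a^{-1}(\mathbb{Z}_{>0})$ in which two vertices are joined whenever their supports share an element of $V_{\Gamma/L}$; walkability of $(q_0, a)$ implies $G$ is connected with $q_0$ reachable from every cycle. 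Given any path in $G$ from $q_0$ to a retained cycle, I reroute it through $\{q_0\} \cup a''^{-1}(\mathbb{Z}_{>0})$ by replacing each intermediate removed cycle $q_r$ with some retained $q'_r$ satisfying $\operatorname{supp}(q_r) \subseteq \operatorname{supp}(q'_r)$ (provided by (S)); the bridging vertices survive because they still lie in $\operatorname{supp}(q'_r)$. Hence the induced subgraph on $\{q_0\} \cup a''^{-1}(\mathbb{Z}_{>0})$ is connected, and a breadth-first traversal from $q_0$ furnishes the desired walkable ordering.
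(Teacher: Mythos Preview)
Your proof is correct and follows the same approach as the paper's: decompose a shortest walk via Lemma~\ref{lem:bunkai}(1), invoke Lemma~\ref{lem:rs}(2) and condition~(S) to obtain $b$, set $a'' = a - b$, verify walkability of $(q_0, a'')$ via Lemma~\ref{lem:bunkai}(2), and lift. The only difference is that the paper asserts walkability of $(q_0, a'')$ in one line (``by the third condition on $b$ above''), whereas you spell out the intersection-graph rerouting argument explicitly; your justification of that step is valid and is precisely the content the paper leaves implicit.
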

\begin{proof}
Suppose that $y \in V_{\Gamma} \cap \Phi ^{-1}(\mathbb{R}_{\ge 0} \Delta)$ satisfies $\operatorname{proj}_{\Delta, v} (\Phi(y)) > \alpha' _{\Delta, v}$. 
Let $p$ be a shortest walk in $\Gamma$ from $x_0$ to $y$. 
By applying Lemma \ref{lem:bunkai}(1) to $\overline{p}$, 
we can take a path $q_0$ in $\Gamma /L$ and a function $a : \operatorname{Cyc}_{\Gamma /L} \to \mathbb{Z}_{\ge 0}$ 
such that $\langle \overline{p} \rangle = \langle q_0 \rangle + \sum _{q \in \operatorname{Cyc}_{\Gamma /L}} a(q) \langle q \rangle$. 
Then, by Lemma \ref{lem:rs}, we have 
\[
\sum _{q \in \operatorname{Cyc}_{\Gamma /L}(F)} a(q) \cdot w(q) > s^{F} _{\sigma, v}
\]
for each $F \in \mathcal{H}'' _{\sigma, v}$. 
Then, by the condition (S) in Subsection \ref{subsubsection:rs}, 
there exists a function $b: \operatorname{Cyc}_{\Gamma /L}(\sigma) \to \mathbb{Z}_{\ge 0}$ such that 
\begin{itemize}
\item 
$b(q) \le a(q)$ holds for any $q \in \operatorname{Cyc}_{\Gamma /L}(\sigma)$, 

\item 
$\sum _{q \in \operatorname{Cyc}_{\Gamma /L}(\sigma)} b(q) \cdot \mu(\langle q \rangle) = \operatorname{cpx}_{\sigma}(v) v$, and 

\item
for any $q \in b^{-1}(\mathbb{Z}_{>0})$, 
there exists $q' \in \operatorname{Cyc}_{\Gamma /L}(\sigma)$ 
such that $b(q') < a(q')$ and $\operatorname{supp}(q) \subset \operatorname{supp}(q')$. 
\end{itemize}
Since any $q \in \operatorname{Cyc}_{\Gamma /L}(\sigma)$ satisfies $\frac{\mu(\langle q \rangle)}{w(q)} \in \sigma$, 
we also have 
\[
\sum _{q \in \operatorname{Cyc}_{\Gamma /L}(\sigma)} b(q) \cdot w(q) = \operatorname{cpx}_{\sigma}(v). 
\]
We define a function $a' : \operatorname{Cyc}_{\Gamma /L} \to \mathbb{Z}_{\ge 0}$ by 
\[
a'(q) := 
\begin{cases}
a(q) - b(q) & \text{(if $q \in \operatorname{Cyc}_{\Gamma /L}(\sigma)$)} \\
a(q)  & \text{(if $q \in \operatorname{Cyc}_{\Gamma /L} \setminus \operatorname{Cyc}_{\Gamma /L}(\sigma)$)}. 
\end{cases}
\]
Then, by the third condition on $b$ above, 
$(q_0, a')$ is also a walkable pair by Lemma \ref{lem:bunkai}(2). 
Hence, there exists a walk $p'$ in $\Gamma$ with $s(p') = x_0$ such that 
\[
\langle \overline{p'} \rangle = \langle q_0 \rangle + \sum _{q \in \operatorname{Cyc}_{\Gamma /L}} a'(q) \cdot \langle q \rangle
= \langle \overline{p} \rangle - \sum _{q \in \operatorname{Cyc}_{\Gamma /L}(\sigma)} b(q) \cdot \langle q \rangle. 
\]
Therefore, we have
\begin{align*}
w(p') 
&= w(p) - \sum _{q \in \operatorname{Cyc}_{\Gamma /L}(\sigma)} b(q) \cdot w(q)
= d_{\Gamma}( x_0, y) - \operatorname{cpx}_{\sigma}(v), \\
t(p') 
&= t(p) - \sum _{q \in \operatorname{Cyc}_{\Gamma /L}(\sigma)} b(q) \cdot  \mu(\langle q \rangle) 
= y  - \operatorname{cpx}_{\sigma}(v)v. 
\end{align*}
Therefore, $p'$ is a walk from $x_0$ to $y - \operatorname{cpx}_{\sigma}(v)v$ of weight 
equal to $d_{\Gamma}( x_0, y) - \operatorname{cpx}_{\sigma}(v)$, which shows the desired inequality. 
\end{proof}

\begin{thm}\label{thm:free}
Let $\sigma \in \operatorname{Facet}(P)$, $\Delta \in T_{\sigma}$ and $v \in V(\Delta)$. 
Then for any $y \in V_{\Gamma} \cap \Phi ^{-1}(\mathbb{R}_{\ge 0} \Delta)$ with $\operatorname{proj}_{\Delta, v} (\Phi(y)) > \beta _{\Delta, v}$, we have  
\[
d_{\Gamma}\bigl( x_0, y + \operatorname{cpx}_{\sigma}(v)v \bigr) 
= d_{\Gamma}( x_0, y) + \operatorname{cpx}_{\sigma}(v). 
\]
\end{thm}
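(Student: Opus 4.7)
The plan is to derive this as a two-sided inequality, combining Theorem \ref{thm:mod} and Theorem \ref{thm:gen}, exploiting that $\beta_{\Delta,v}$ was defined precisely as $\max\bigl\{\alpha_{\Delta,v},\ \alpha'_{\Delta,v} - \operatorname{cpx}_\sigma(v)\bigr\}$ so that both hypotheses can be met simultaneously.

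For the $\le$ direction, I would observe that $\operatorname{proj}_{\Delta,v}(\Phi(y)) > \beta_{\Delta,v} \ge \alpha_{\Delta,v}$, so Theorem \ref{thm:mod} applies directly and gives
\[
d_\Gamma\bigl(x_0,\ y+\operatorname{cpx}_\sigma(v)v\bigr) \le d_\Gamma(x_0,y)+\operatorname{cpx}_\sigma(v).
\]

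For the $\ge$ direction, I would apply Theorem \ref{thm:gen} to the shifted vertex $y' := y+\operatorname{cpx}_\sigma(v)v$ rather than to $y$ itself. First I would check that $y' \in V_\Gamma \cap \Phi^{-1}(\mathbb{R}_{\ge 0}\Delta)$: we have $y' \in V_\Gamma$ because $\operatorname{cpx}_\sigma(v)v \in L$ by the choice of $\operatorname{cpx}_\sigma(v)$ in Subsection \ref{subsubsection:rs}, and by periodicity of $\Phi$,
\[
\Phi(y') = \Phi(y) + \operatorname{cpx}_\sigma(v)v,
\]
so $\operatorname{proj}_{\Delta,v'}(\Phi(y')) = \operatorname{proj}_{\Delta,v'}(\Phi(y))$ for $v'\ne v$ and $\operatorname{proj}_{\Delta,v}(\Phi(y')) = \operatorname{proj}_{\Delta,v}(\Phi(y)) + \operatorname{cpx}_\sigma(v)$; all coefficients remain nonnegative, so $\Phi(y')\in\mathbb{R}_{\ge 0}\Delta$. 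Next, the hypothesis $\operatorname{proj}_{\Delta,v}(\Phi(y)) > \beta_{\Delta,v} \ge \alpha'_{\Delta,v}-\operatorname{cpx}_\sigma(v)$ rearranges to
\[
\operatorname{proj}_{\Delta,v}(\Phi(y')) = \operatorname{proj}_{\Delta,v}(\Phi(y)) + \operatorname{cpx}_\sigma(v) > \alpha'_{\Delta,v},
\]
so Theorem \ref{thm:gen} applies to $y'$ and yields
\[
d_\Gamma(x_0,\ y'-\operatorname{cpx}_\sigma(v)v) \le d_\Gamma(x_0,y')-\operatorname{cpx}_\sigma(v),
\]
which is exactly $d_\Gamma(x_0,y) \le d_\Gamma(x_0,\ y+\operatorname{cpx}_\sigma(v)v) - \operatorname{cpx}_\sigma(v)$.

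Combining the two inequalities gives the desired equality. There is no real obstacle here; the substantive work was already done in Theorems \ref{thm:mod} and \ref{thm:gen}, and the role of this statement is just to package them together. The only subtle point to verify carefully is the shift computation, namely that $\Phi(y+\operatorname{cpx}_\sigma(v)v)$ lies in $\mathbb{R}_{\ge 0}\Delta$ and that its $v$-coordinate exceeds $\alpha'_{\Delta,v}$, both of which follow immediately from the periodicity of $\Phi$ and the very definition of $\beta_{\Delta,v}$.
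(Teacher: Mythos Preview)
Your proposal is correct and follows essentially the same approach as the paper's proof: both derive the two inequalities by applying Theorem \ref{thm:mod} to $y$ and Theorem \ref{thm:gen} to $y+\operatorname{cpx}_\sigma(v)v$, using the definition $\beta_{\Delta,v}=\max\{\alpha_{\Delta,v},\alpha'_{\Delta,v}-\operatorname{cpx}_\sigma(v)\}$ to ensure both hypotheses are met. You are simply more explicit than the paper in verifying that the shifted point lies in $V_\Gamma\cap\Phi^{-1}(\mathbb{R}_{\ge 0}\Delta)$.
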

\begin{proof}
Since $\beta_{\Delta, v} = \max \bigl\{ \alpha _{\Delta, v}, \alpha ' _{\Delta, v} - \operatorname{cpx}_{\sigma}(v) \bigr\}$, we have
\begin{align*}
&\operatorname{proj}_{\Delta, v} (\Phi(y)) > \beta _{\Delta, v}  \ge \alpha _{\Delta, v}, \\
&\operatorname{proj}_{\Delta, v} \bigl( \Phi(y + \operatorname{cpx}_{\sigma}(v)v ) \bigr) 
> \beta _{\Delta, v} + \operatorname{cpx}_{\sigma}(v) \ge \alpha' _{\Delta, v}. 
\end{align*}
Therefore, the assertion follows from Theorems \ref{thm:mod} and \ref{thm:gen}. 
\end{proof}

\begin{defi}\label{defi:M}
\begin{enumerate}
\item 
We define 
\[
B := \{ (d,y) \in \mathbb{Z}_{\ge 0} \times V_{\Gamma} \mid d_{\Gamma}(x_0, y) \le d \} \subset \mathbb{Z}_{\ge 0} \times V_{\Gamma}. 
\]

\item
For any subset $F \subset L_{\mathbb{R}}$, we define 
\[
B(F) := B \cap  \bigl( \mathbb{Z}_{\ge 0} \times \Phi^{-1}(F) \bigr). 
\]
We also define $S(F)$ by 
\[
S(F) := \bigl\{ \bigl( d_{\Gamma}(x_0, y), y \bigr) \ \big| \  y \in V_{\Gamma} \cap \Phi ^{-1}(F)  \bigr\}. 
\]
Note that we have $B(F) = \mathbb{Z}_{\ge 0}(1,0) + S(F)$. 

\item 
Let $\sigma \in \operatorname{Facet}(P)$, $\Delta \in T_{\sigma}$ and $\Delta ' \in \operatorname{Face}(\Delta)$. 
We define 
\[
L_{\Delta '} := \mathbb{R}_{\ge 0}\Delta ' \subset L_{\mathbb{R}}. 
\]
For $v \in V(\Delta)$, we also define 
\begin{align*}
L_{\Delta, v} ^{> \beta} 
&:= \{ z \in L_{\Delta} \mid \operatorname{proj}_{\Delta, v}(z) > \beta _{\Delta, v} \}, \\
L_{\Delta, v} ^{\le \beta} 
&:= \{ z \in L_{\Delta} \mid \operatorname{proj}_{\Delta, v}(z) \le \beta _{\Delta, v} \}, \\
L_{\Delta, v} ^{(\beta, \beta']} 
&:= \{ z \in L_{\Delta} \mid  \beta' _{\Delta, v}  \ge \operatorname{proj}_{\Delta, v}(z) > \beta _{\Delta, v} \} \subset L_{\Delta, v} ^{> \beta}. 
\end{align*}
For $\Delta '' \in \operatorname{Face}(\Delta ')$, we define  
\begin{align*}
L_{\Delta, \Delta ', \Delta''}
&:= L_{\Delta '}
\cap \Biggl( \bigcap _{v \in V(\Delta ') \setminus V(\Delta '')} L_{\Delta, v} ^{\le \beta} \Biggr)
\cap \Biggl( \bigcap _{v \in V(\Delta '')} L_{\Delta, v} ^{> \beta} \Biggr)
\subset L_{\Delta '}, \\
\overline{L}_{\Delta, \Delta ', \Delta''} 
&:= L_{\Delta '}
\cap \Biggl( \bigcap _{v \in V(\Delta ') \setminus V(\Delta '')} L_{\Delta, v} ^{\le \beta} \Biggr) 
\cap \Biggl( \bigcap _{v \in V(\Delta '')} L_{\Delta, v} ^{(\beta, \beta']} \Biggr)
\subset L_{\Delta, \Delta ', \Delta''}. 
\end{align*}
Then we have 
\[
L_{\Delta '} = \bigsqcup _{\Delta '' \in \operatorname{Face}(\Delta ')} L_{\Delta, \Delta ', \Delta''}. 
\]

\item 
For $\sigma \in \operatorname{Facet}(P)$, $\Delta \in T_{\sigma}$ and $\Delta ' \in \operatorname{Face}(\Delta)$, 
we define a monoid $M_{\sigma, \Delta '}$ by 
\[
M_{\sigma, \Delta '} := \mathbb{Z}_{\ge 0}(1,0) + 
\sum _{v \in V(\Delta ')} \mathbb{Z}_{\ge 0} \bigl( \operatorname{cpx}_{\sigma}(v), \operatorname{cpx}_{\sigma}(v) v \bigr) 
\subset \mathbb{Z} _{\ge 0} \times L. 
\]
This is a free submonoid of $\mathbb{Z} _{\ge 0} \times L$ generated by $(1,0)$ and 
$\bigl( \operatorname{cpx}_{\sigma}(v), \operatorname{cpx}_{\sigma}(v) v \bigr)$ for $v \in V(\Delta ')$. 
\end{enumerate}
\end{defi}

\begin{figure}[htbp]
\centering
\includegraphics[width=140mm]{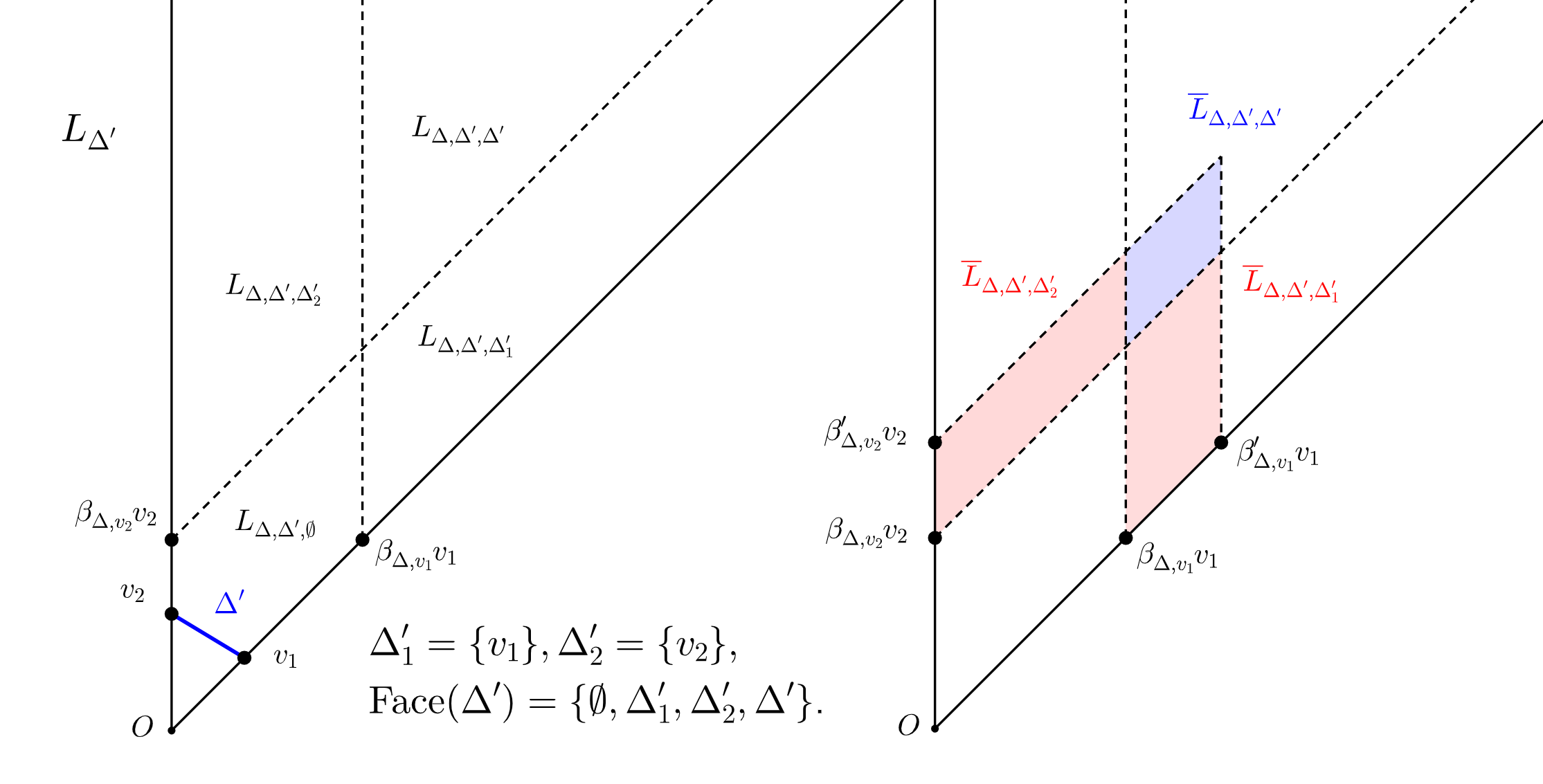}
\caption{$L_{\Delta '}$, $L_{\Delta, \Delta', \Delta''}$, and $\overline{L}_{\Delta, \Delta', \Delta''}$.}
\label{fig:L2}
\end{figure}

\begin{thm}\label{thm:fingen}
Let $\sigma \in \operatorname{Facet}(P)$, $\Delta \in T_{\sigma}$, 
$\Delta ' \in \operatorname{Face}(\Delta)$ and $\Delta '' \in \operatorname{Face}(\Delta ')$. 
Then, the following assertions hold. 
\begin{enumerate}
\item 
$B \left( L_{\Delta, \Delta ', \Delta''} \right)$ is a free $M_{\sigma, \Delta ''}$-module with basis 
$S \left(\overline{L}_{\Delta, \Delta ', \Delta''} \right)$. 

\item
For any $(d, y) \in S \left(\overline{L}_{\Delta, \Delta ', \Delta''} \right)$, 
we have $d \le C'_2 + \beta _{\Delta, \Delta '} + \sum _{v \in V(\Delta '')} \operatorname{cpx}_{\sigma}(v)$. 
\end{enumerate}
\end{thm}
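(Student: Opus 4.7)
The plan is to exhibit an explicit bijective parametrization of $B(L_{\Delta,\Delta',\Delta''})$ by pairs in $M_{\sigma,\Delta''} \times S(\overline{L}_{\Delta,\Delta',\Delta''})$; the argument assembles the three preparatory Theorems \ref{thm:mod}, \ref{thm:gen}, and \ref{thm:free} around the half-open-interval structure of $\overline{L}_{\Delta,\Delta',\Delta''}$ relative to $L_{\Delta,\Delta',\Delta''}$. First I would verify that $M_{\sigma,\Delta''}$ acts on $B(L_{\Delta,\Delta',\Delta''})$ by translation: the generator $(1,0)$ is trivial, and for a generator $(\operatorname{cpx}_\sigma(v), \operatorname{cpx}_\sigma(v)v)$ with $v \in V(\Delta'')$ the projection $\operatorname{proj}_{\Delta,v}$ strictly increases while all other projections are preserved, so the $L$-component remains in $L_{\Delta,\Delta',\Delta''}$. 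Since $\beta_{\Delta,v} \ge \alpha_{\Delta,v}$, Theorem \ref{thm:mod} yields $d_\Gamma(x_0, y+\operatorname{cpx}_\sigma(v)v) \le d+\operatorname{cpx}_\sigma(v)$, keeping the pair inside $B$.

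Next I would construct the decomposition explicitly. Given $(d,y) \in B(L_{\Delta,\Delta',\Delta''})$, for each $v \in V(\Delta'')$ let $k_v \in \mathbb{Z}_{\ge 0}$ be the unique integer satisfying
\[
\operatorname{proj}_{\Delta,v}(\Phi(y)) - k_v\operatorname{cpx}_\sigma(v) \in (\beta_{\Delta,v},\beta'_{\Delta,v}],
\]
which exists uniquely because the interval has length $\operatorname{cpx}_\sigma(v)$. Setting $y_0 := y - \sum_{v \in V(\Delta'')} k_v\operatorname{cpx}_\sigma(v)v$, the new projections lie in the prescribed intervals, so $\Phi(y_0) \in \overline{L}_{\Delta,\Delta',\Delta''}$. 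Iterating Theorem \ref{thm:free} along any path that adds the $\operatorname{cpx}_\sigma(v)v$ vectors back to $y_0$ one at a time gives $d_\Gamma(x_0,y) = d_\Gamma(x_0,y_0) + \sum_v k_v\operatorname{cpx}_\sigma(v)$ (each intermediate vertex satisfies the hypothesis of Theorem \ref{thm:free} for the $v$ being added, since we only ever raise the $V(\Delta'')$-projections). Putting $d_0 := d_\Gamma(x_0,y_0)$ and $j := d - d_0 - \sum_v k_v\operatorname{cpx}_\sigma(v) \ge 0$, I obtain the expression $(d,y) = j(1,0) + \sum_v k_v(\operatorname{cpx}_\sigma(v), \operatorname{cpx}_\sigma(v)v) + (d_0, y_0)$. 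Uniqueness (hence freeness) is then immediate: any valid remainder must satisfy the same half-open-interval constraints on its $V(\Delta'')$-projections, which pins down $y_0$, and then $d_0 = d_\Gamma(x_0,y_0)$ and the monoid coefficients are forced. The monoid $M_{\sigma,\Delta''}$ itself is free because the vectors $\operatorname{cpx}_\sigma(v)v$ for $v \in V(\Delta'')$ are $\mathbb{R}$-linearly independent, $\Delta$ being a simplex.

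For assertion (2), for $(d_0,y_0) \in S(\overline{L}_{\Delta,\Delta',\Delta''})$ the definition of $C_2$ gives
\[
d_0 = d_\Gamma(x_0,y_0) \le d_{P,\Phi}(x_0,y_0) + C'_2 = d_P(\Phi(y_0)) + C'_2.
\]
Because $\Phi(y_0) \in \mathbb{R}_{\ge 0}\Delta$ with $\Delta \subset \sigma \in \operatorname{Facet}(P)$ and $0 \in \operatorname{int}(P)$, the ray from $0$ through $\Phi(y_0)$ meets $\partial P$ exactly on $\sigma$, so $d_P(\Phi(y_0)) = \sum_{v \in V(\Delta')}\operatorname{proj}_{\Delta,v}(\Phi(y_0))$. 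Bounding each summand by $\beta_{\Delta,v}$ (for $v \in V(\Delta')\setminus V(\Delta'')$) or by $\beta'_{\Delta,v} = \beta_{\Delta,v}+\operatorname{cpx}_\sigma(v)$ (for $v \in V(\Delta'')$) produces the stated inequality. The main obstacle lies in the iteration step for (1): one must check that the hypothesis of Theorem \ref{thm:free} holds at every intermediate vertex in the reconstruction of $y$ from $y_0$, but this follows from the monotonicity observation that each translation raises only the corresponding $v$-projection and leaves all other $V(\Delta'')$-projections unchanged, so they stay above $\beta_{\Delta,v}$ throughout. Everything else is essentially bookkeeping with the fundamental domain defined by the half-open intervals.
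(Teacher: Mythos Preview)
Your argument is correct and follows essentially the same route as the paper's proof: verify closure of $B(L_{\Delta,\Delta',\Delta''})$ under the generators of $M_{\sigma,\Delta''}$, reduce an arbitrary element to a basis element in $S(\overline{L}_{\Delta,\Delta',\Delta''})$ via the half-open fundamental domain and Theorem~\ref{thm:free}, and read off uniqueness from the interval widths; part~(2) is the same $d_\Gamma \le C'_2 + d_{P,\Phi}$ bound combined with the projection constraints on $\overline{L}_{\Delta,\Delta',\Delta''}$. The only cosmetic difference is that you invoke Theorem~\ref{thm:mod} (the inequality) for the module-closure step whereas the paper cites Theorem~\ref{thm:free} (the equality) there, and you spell out the iteration of Theorem~\ref{thm:free} more carefully than the paper, which compresses it into a single sentence.
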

\begin{proof}
We prove (1). First, we prove that $B \left( L_{\Delta, \Delta ', \Delta''} \right)$ is an  $M_{\sigma, \Delta ''}$-module. 
By definition, it is easy to see that $B \left( L_{\Delta, \Delta ', \Delta''} \right) + (1,0) \subset B \left( L_{\Delta, \Delta ', \Delta''} \right)$. 
We fix $(d, y) \in B \left( L_{\Delta, \Delta ', \Delta''} \right)$ and $v \in V(\Delta'')$. 
By the definition of $L_{\Delta, \Delta ', \Delta''}$, we have $\operatorname{proj}_{\Delta, v} (\Phi (y)) > \beta _{\Delta, v}$. 
Therefore, By Theorem \ref{thm:free}, we have 
\[
d_{\Gamma}(x_0, y + \operatorname{cpx}_{\sigma}(v) v) 
= d_{\Gamma}(x_0, y) + \operatorname{cpx}_{\sigma}(v) 
\le d + \operatorname{cpx}_{\sigma}(v). 
\]
Furthermore, we have $\Phi (y + \operatorname{cpx}_{\sigma}(v) v) \in L_{\Delta, \Delta ', \Delta''}$. 
Hence, we conclude that 
\[
(d,y) + \bigl( \operatorname{cpx}_{\sigma}(v), \operatorname{cpx}_{\sigma}(v) v \bigr) \in B \left( L_{\Delta, \Delta ', \Delta''} \right). 
\]

We fix $(d, y) \in B \left( L_{\Delta, \Delta ', \Delta''} \right)$. 
By the definitions of $L_{\Delta, \Delta ', \Delta''}$ and $\overline{L}_{\Delta, \Delta ', \Delta''}$, 
there exist $y' \in V_{\Gamma} \cap \Phi^{-1} \left( \overline{L}_{\Delta, \Delta ', \Delta''} \right)$ and 
$b_v \in \mathbb{Z}_{\ge 0}$ for $v \in V(\Delta '')$ such that 
\[
y = y' + \sum _{v \in V(\Delta '')} b_v \cdot \operatorname{cpx}_{\sigma}(v) \cdot v. 
\]
By Theorem \ref{thm:free}, we have 
\[
d_{\Gamma}(x_0, y) 
= d_{\Gamma}(x_0, y') + \sum _{v \in V(\Delta '')} b_v \cdot \operatorname{cpx}_{\sigma}(v).
\]
Therefore, for $d' := d_{\Gamma}(x_0, y')$, we have $(d', y') \in S \left(\overline{L}_{\Delta, \Delta ', \Delta''} \right)$ and 
\begin{align*}
(d,y) 
&= 
\left( d', y' \right) + \left( d-d_{\Gamma}(x_0, y) \right) \cdot (1, 0) 
+ \sum _{v \in V(\Delta '')} b_v \cdot \bigl( \operatorname{cpx}_{\sigma}(v), \operatorname{cpx}_{\sigma}(v) v \bigr) \\
&\in
\left( d', y' \right) + M_{\sigma, \Delta ''}. 
\end{align*}
We have proved that $B \left( L_{\Delta, \Delta ', \Delta''} \right)$ is an $M_{\sigma, \Delta ''}$-module generated by 
$S \left(\overline{L}_{\Delta, \Delta ', \Delta''} \right)$. 

Suppose that $(d, y), (d', y') \in S \left(\overline{L}_{\Delta, \Delta ', \Delta''} \right)$ satisfies 
\begin{align*}
&(d,y) + a \cdot (1,0) + \sum _{v \in V(\Delta '')} b_v \cdot \operatorname{cpx}_{\sigma}(v)\\
&= 
(d',y') + a' \cdot (1,0) + \sum _{v \in V(\Delta '')} b'_v \cdot \operatorname{cpx}_{\sigma}(v)
\end{align*}
with some $a, a' \in \mathbb{Z}_{\ge 0}$ and $b_v, b'_v \in \mathbb{Z}_{\ge 0}$ for $v \in V(\Delta '')$. 
Then, for each $v \in V(\Delta '')$, we have 
\[
\operatorname{proj}_{\Delta, v} (\Phi (y) ) + b_v \cdot \operatorname{cpx}_{\sigma}(v) = 
\operatorname{proj}_{\Delta, v} (\Phi(y')) + b'_v \cdot \operatorname{cpx}_{\sigma}(v). 
\]
Here, by the definition of $\overline{L}_{\Delta, \Delta ', \Delta''}$, 
we have 
\[
\operatorname{proj}_{\Delta, v} (\Phi (y) ), \ \operatorname{proj}_{\Delta, v} (\Phi(y')) \in \left( \beta_{\Delta, v}, \beta' _{\Delta, v} \right]. 
\]
Since $\beta' _{\Delta, v} - \beta _{\Delta, v} = \operatorname{cpx}_{\sigma}(v) $, we have $y=y'$ and $b_v = b'_v$ for any $v \in V(\Delta '')$. 
Since $y = y'$, we have $d = d_{\Gamma} (x_0, y) = d_{\Gamma} (x_0, y')  = d'$ by the definition of $S \left(\overline{L}_{\Delta, \Delta ', \Delta''}  \right)$. 
Thus, we also have $a = a'$. 
Therefore, we conclude that $B \left( L_{\Delta, \Delta ', \Delta''} \right)$ is freely generated by $S \left(\overline{L}_{\Delta, \Delta ', \Delta''}  \right)$. We complete the proof of (1). 

Next, we prove (2). 
For $y \in V_{\Gamma} \cap \Phi ^{-1}\left( \overline{L}_{\Delta, \Delta ', \Delta''} \right)$, we have 
\begin{align*}
d_{\Gamma}(x_0, y)
&\le C' _2 + d_{P, \Phi}(x_0, y) \\
&= C' _2 + \sum _{v \in V(\Delta ')} \operatorname{proj}_{\Delta, v}(\Phi(y)) \\
&\le C'_2 + \sum _{v \in V(\Delta '')} \beta' _{\Delta, v} + \sum _{v \in V(\Delta ') \setminus V(\Delta '')} \beta _{\Delta, v}\\
&\le C'_2 + \beta _{\Delta, \Delta '} + \sum _{v \in V(\Delta '')} \operatorname{cpx}_{\sigma}(v), 
\end{align*}
which completes the proof. 
\end{proof}

\begin{defi}
For $d \in \mathbb{Z}_{\ge 0}$ and a subset $S \subset \mathbb{Z}_{\ge 0} \times V_{\Gamma}$, we define 
\[
S_d := \{ x \in V_{\Gamma} \mid (d,x) \in S \}. 
\]
Furthermore, we define a function $f_{S} : \mathbb{Z}_{\ge 0} \to \mathbb{Z}_{\ge 0}$ by $f_{S} (d) := \# S_d$. 
\end{defi}

\begin{thm}\label{thm:main}
Let $\sigma \in \operatorname{Facet}(P)$, $\Delta \in T_{\sigma}$, 
$\Delta ' \in \operatorname{Face}(\Delta)$ and $\Delta '' \in \operatorname{Face}(\Delta ')$. 
Then the following assertions hold. 

\begin{enumerate}
\item 
The generating function of $f_{B ( L_{\Delta, \Delta ', \Delta''} )}$ is given by 
\[
\sum _{i \ge 0} f_{B ( L_{\Delta, \Delta ', \Delta''} )}(i) t^i = 
\frac{Q(t)}{(1-t) \prod _{v \in V(\Delta '')} \bigl( 1 - t^{\operatorname{cpx}_{\sigma}(v)} \bigr)}, 
\]
where $Q(t)$ is a polynomial with $\deg Q \le C'_2 + \beta _{\Delta, \Delta '} + \sum _{v \in V(\Delta '')} \operatorname{cpx}_{\sigma}(v)$. 

\item 
The generating function of $f_{B ( L_{\Delta '} )}$ is given by 
\[
\sum _{i \ge 0} f_{B ( L_{\Delta '} )}(i) t^i = 
\frac{Q(t)}{(1-t) \prod _{v \in V(\Delta ')} \left( 1 - t^{\operatorname{cpx}_{\sigma}(v)} \right)}, 
\]
where $Q(t)$ is a polynomial with $\deg Q \le C'_2 + \beta _{\Delta, \Delta '} + \sum _{v \in V(\Delta ')} \operatorname{cpx}_{\sigma}(v)$. 

\item 
The generating function of $f_{B}$ is given by 
\[
\sum _{i \ge 0} f_{B}(i) t^i = 
\frac{Q(t)}{(1-t)R(t)}, 
\]
where 
\[
R(t) := \operatorname{LCM} \left\{ \prod _{v \in V(\Delta)} \bigl( 1 - t^{\operatorname{cpx}_{\sigma}(v)} \bigr) \ \middle | \ \sigma \in \operatorname{Facet}(P),\ \Delta \in T_{\sigma} \right \},  
\]
and $Q(t)$ is a polynomial with $\deg Q \le \beta + \deg R$. 
\end{enumerate}
\end{thm}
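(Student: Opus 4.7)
The plan is to reduce the entire theorem to the structural result Theorem \ref{thm:fingen} together with standard generating-function bookkeeping. For (1), I start from Theorem \ref{thm:fingen}(1), which identifies $B(L_{\Delta, \Delta', \Delta''})$ as a free $M_{\sigma, \Delta''}$-module with basis $S(\overline{L}_{\Delta, \Delta', \Delta''})$. Since $\overline{L}_{\Delta, \Delta', \Delta''}$ is a bounded set and $V_{\Gamma}$ has locally finitely many points in each bounded region (by periodicity), this basis is finite. Because $M_{\sigma, \Delta''}$ is freely generated by $(1,0)$ and $(\operatorname{cpx}_\sigma(v), \operatorname{cpx}_\sigma(v)v)$ for $v \in V(\Delta'')$, its generating function in the first coordinate is exactly $1/\bigl((1-t)\prod_{v \in V(\Delta'')}(1-t^{\operatorname{cpx}_\sigma(v)})\bigr)$. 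Hence the generating function of $f_{B(L_{\Delta,\Delta',\Delta''})}$ is this factor times the polynomial $Q(t) := \sum_{(d,y) \in S(\overline{L}_{\Delta,\Delta',\Delta''})} t^d$, and Theorem \ref{thm:fingen}(2) directly gives $\deg Q \le C'_2 + \beta_{\Delta,\Delta'} + \sum_{v \in V(\Delta'')} \operatorname{cpx}_\sigma(v)$.

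For (2), I exploit the disjoint decomposition $L_{\Delta'} = \bigsqcup_{\Delta'' \in \operatorname{Face}(\Delta')} L_{\Delta, \Delta', \Delta''}$ from Definition \ref{defi:M}, which yields $f_{B(L_{\Delta'})} = \sum_{\Delta''} f_{B(L_{\Delta, \Delta', \Delta''})}$. Placing the generating functions from (1) over the common denominator $(1-t)\prod_{v \in V(\Delta')}(1-t^{\operatorname{cpx}_\sigma(v)})$ requires multiplying each numerator by $\prod_{v \in V(\Delta') \setminus V(\Delta'')}(1-t^{\operatorname{cpx}_\sigma(v)})$, which contributes $\sum_{v \in V(\Delta') \setminus V(\Delta'')} \operatorname{cpx}_\sigma(v)$ to the degree. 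Combining with the bound from (1), the degree of the resulting polynomial is at most $C'_2 + \beta_{\Delta,\Delta'} + \sum_{v \in V(\Delta')} \operatorname{cpx}_\sigma(v)$, as required.

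For (3), since $0 \in \operatorname{int}(P)$ and the $T_\sigma$'s triangulate the facets of $P$, the cones $L_\Delta$ for top-dimensional $\Delta$ (over all $\sigma \in \operatorname{Facet}(P)$, $\Delta \in T_\sigma$) cover $L_{\mathbb{R}}$. I apply inclusion–exclusion with respect to this cover, expressing $f_B$ as an alternating sum over intersections. Each intersection is itself a union of cones of the form $L_{\Delta'}$ for various $\Delta' \in \operatorname{Face}(\Delta)$; applying (2) to each, the generating functions all have denominators dividing $(1-t)R(t)$, so the whole sum can be placed over that common denominator. Using the bound from (2), adding $\deg R - \sum_{v \in V(\Delta')} \operatorname{cpx}_\sigma(v)$ to account for the missing factors in each summand, and then using $\beta_{\Delta,\Delta'} = \sum_{v \in V(\Delta')} \beta_{\Delta,v} \le \sum_{v \in V(\Delta)} \beta_{\Delta,v} = \beta_{\Delta,\Delta}$ together with the definition of $\beta$, the overall numerator degree is bounded by $C'_2 + \beta_{\Delta,\Delta} + \deg R \le \beta + \deg R$.

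The main obstacle will be the inclusion–exclusion step in (3): a priori the cones $L_\Delta$ from different facets $\sigma$ need not form a common fan, since $(\diamondsuit)_2$ only constrains each triangulation $T_\sigma$ internally and not its compatibility with $T_{\sigma'}$ on shared faces of $P$. To handle this cleanly I would use a half-open decomposition (pick a generic linear functional on $L_{\mathbb{R}}$ and, for each top-dimensional $\Delta$, remove the boundary faces on one side) so that $L_{\mathbb{R}}$ is written as a \emph{disjoint} union of half-open pieces, each a difference of the form $L_\Delta \setminus (\text{union of certain } L_{\Delta'})$. The generating function of each such piece is an alternating sum of the generating functions of (2), with the same denominator structure, and the degree tracking above goes through unchanged.
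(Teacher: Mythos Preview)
Your arguments for (1) and (2) are essentially identical to the paper's: both invoke Theorem~\ref{thm:fingen} for (1) and then sum over the disjoint decomposition $L_{\Delta'} = \bigsqcup_{\Delta''} L_{\Delta,\Delta',\Delta''}$ for (2), with the same degree bookkeeping.

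For (3) the strategies diverge. You correctly flag that the triangulations $T_\sigma$ need not be compatible along shared faces of $P$, so the collection of cones $\{L_\Delta\}$ is not a fan and a naive inclusion--exclusion over all top-dimensional $\Delta$'s fails (the intersections $L_{\Delta_1}\cap L_{\Delta_2}$ are not in general of the form $L_{\Delta'}$). Your proposed fix via a half-open decomposition can be made to work, but it requires some care precisely because the cones do not form a fan: one must check that a generic direction $\xi$ still yields a genuine disjoint cover and that each half-open piece is an alternating sum of cones $L_{\Delta'}$ for faces $\Delta'$ of a single $\Delta$. The paper avoids this technicality by a two-level inclusion--exclusion. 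First, for each face $\tau$ of $P$ it chooses \emph{one} facet $\sigma \supset \tau$ and uses the simplices of $T_\sigma$ lying in $\tau$; these \emph{do} form a collection closed under intersection (since $T_\sigma$ is a triangulation), so (2) plus ordinary inclusion--exclusion gives the desired form for $f_{B(\mathbb{R}_{\ge 0}\tau)}$. Second, the faces of $P$ themselves are closed under intersection, so a further inclusion--exclusion over $\{\mathbb{R}_{\ge 0}\tau : \tau\in\operatorname{Face}(P)\}$ gives $f_B$. This two-step route sidesteps the fan issue entirely: the incompatibility of $T_{\sigma_1}|_\tau$ and $T_{\sigma_2}|_\tau$ is irrelevant because only one of them is ever used to analyse $\mathbb{R}_{\ge 0}\tau$. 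Your half-open approach, once the disjointness is verified, has the advantage of being a single-pass decomposition, but the paper's approach is shorter to justify.
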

\begin{proof}
We prove (1). 
By Theorem \ref{thm:fingen}(1), $B \left( L_{\Delta, \Delta ', \Delta''} \right)$ is freely generated by 
the finite set $S \left(\overline{L}_{\Delta, \Delta ', \Delta''} \right)$ as an $M_{\sigma, \Delta ''}$-module. 
Note that $M_{\sigma, \Delta ''}$ is a free monoid with basis $(1,0)$ and 
$\bigl( \operatorname{cpx}_{\sigma}(v), \operatorname{cpx}_{\sigma}(v)v \bigr)$ for 
$v \in V(\Delta '')$. 
Therefore, we have 
\[
\sum _{i \ge 0} f_{B ( L_{\Delta, \Delta ', \Delta''} )}(i) t^i = 
\frac{Q(t)}{(1-t) \prod _{v \in V(\Delta '')} \bigl( 1 - t^{\operatorname{cpx}_{\sigma}(v)} \bigr)}, 
\]
where 
\[
Q(t) = \sum _{i \ge 0} f_{S ( \overline{L}_{\Delta, \Delta ', \Delta''} )}(i) t^i. 
\]
Here, by Theorem \ref{thm:fingen}(2), we have  
\begin{align*}
\deg Q(t) 
&= \max \left \{ d \ \middle | \ (d,y) \in S \left(\overline{L}_{\Delta, \Delta ', \Delta''} \right) \right \} \\
&\le C'_2 + \beta _{\Delta, \Delta '} + \sum _{v \in V(\Delta '')} \operatorname{cpx}_{\sigma}(v), 
\end{align*}
which completes the proof of (1). 

Since we have 
$L_{\Delta '} = \bigsqcup _{\Delta '' \in \operatorname{Face}(\Delta ')} L_{\Delta, \Delta ', \Delta''}$, (2) follows from (1). 

(3) follows from (2) by the inclusion-exclusion principle as detailed below.  
We fix a face $\tau$ of $\sigma$. We set
\[
\Xi _{\tau} := \{ \Delta ' \mid \Delta \in T_{\sigma}, \Delta ' \in \operatorname{Face}(\Delta), \Delta ' \subset \tau \}. 
\]
Then we have $\tau = \bigcup _{\Delta ' \in \Xi _{\tau}} \Delta '$, 
and furthermore, the set $\Xi _{\tau}$ is closed under taking intersection. 
Therefore, by (2), the generating function of $f_{B(\mathbb{R}_{\ge 0} \tau)}$ is given by the form
\[
\frac{Q(t)}{(1-t)R(t)}, 
\]
where $Q(t)$ is a polynomial with $\deg Q \le \beta + \deg R$. 
Since the set $\{ \tau \mid \tau \in \operatorname{Face}(P) \}$ is closed under taking intersection, 
we can conclude that the generating function of $f_B$ is also given by the same form. 
We complete the proof of (3). 
\end{proof}

Here, we will summarize our main theorem. 

\begin{cor}\label{cor:main}
Let $(\Gamma , L)$ be a strongly connected $n$-dimensional periodic graph, and let $x_0 \in V_{\Gamma}$. 
Let $(b(d))_d$ and $(s(d))_d$ be the cumulative growth sequence and the growth sequence of $\Gamma$ with the start point $x_0$ 
(see Subsection \ref{subsection:CS}). 
Let $\beta \in \mathbb{R}_{\ge 0}$ and $\operatorname{cpx}_{\Gamma} \in \mathbb{Z}_{>0}$ be as in Subsection \ref{subsection:inv}. 
Then, the following assertions hold. 
\begin{enumerate}
\item 
The generating function of $b: \mathbb{Z}_{\ge 0} \to \mathbb{Z}_{\ge 0}; d \mapsto b(d)$ is given by 
\[
\sum _{i \ge 0} b(i) t^i = 
\frac{Q(t)}{(1-t) ( 1 - t^{\operatorname{cpx}_{\Gamma}} )^{n}}, 
\]
where $Q(t)$ is a polynomial with $\deg Q \le \beta + n \cdot \operatorname{cpx}_{\Gamma}$. 
In particular, the function $b: \mathbb{Z}_{\ge 0} \to \mathbb{Z}_{\ge 0}; d \mapsto b(d)$ is a quasi-polynomial on $d > \beta -1$, and 
$\operatorname{cpx}_{\Gamma}$ is its quasi-period. 

\item 
The generating function of $s: \mathbb{Z}_{\ge 0} \to \mathbb{Z}_{\ge 0}; d \mapsto s(d)$ is given by 
\[
\sum _{i \ge 0} s(i) t^i = 
\frac{Q(t)}{(1 - t^{\operatorname{cpx}_{\Gamma}} )^{n}}, 
\]
where $Q(t)$ is a polynomial with $\deg Q \le \beta + n \cdot \operatorname{cpx}_{\Gamma}$. 
In particular, 
the function $s: \mathbb{Z}_{\ge 0} \to \mathbb{Z}_{\ge 0}; d \mapsto s(d)$ is a quasi-polynomial on $d > \beta$, and 
$\operatorname{cpx}_{\Gamma}$ is its quasi-period. 
\end{enumerate}
\end{cor}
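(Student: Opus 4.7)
The plan is to derive Corollary \ref{cor:main} as a direct combinatorial consequence of Theorem \ref{thm:main}(3) via the standard dictionary between rational generating functions and quasi-polynomials. First, I would verify the identifications $b(d) = f_B(d)$ (directly from the definition of $B$ in Definition \ref{defi:M} and of $b_{\Gamma, x_0, d}$ in Subsection \ref{subsection:CS}) and $s(d) = b(d) - b(d-1)$ for $d \ge 1$; the second identity translates on the level of generating functions to $\sum_{i} s(i) t^i = (1-t) \sum_{i} b(i) t^i$. Thus the two parts of the corollary reduce to computing a single generating function, namely that of $b$.

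Next, I would massage the formula provided by Theorem \ref{thm:main}(3) into the desired denominator. Since every facet $\sigma$ of $P$ has dimension $n-1$, every $\Delta \in T_\sigma$ is an $(n-1)$-simplex and hence $\#V(\Delta) = n$. Combined with the observation that $\operatorname{cpx}_\sigma(v) \mid \operatorname{cpx}_\Gamma$, which holds by the definition of $\operatorname{cpx}_\Gamma$, this implies that $\prod_{v \in V(\Delta)}(1 - t^{\operatorname{cpx}_\sigma(v)})$ divides $(1 - t^{\operatorname{cpx}_\Gamma})^n$ for every $\sigma$ and every $\Delta \in T_\sigma$, so the LCM $R(t)$ also divides $(1 - t^{\operatorname{cpx}_\Gamma})^n$; in particular $\deg R \le n \cdot \operatorname{cpx}_\Gamma$. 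Multiplying the numerator and denominator of the formula from Theorem \ref{thm:main}(3) by $(1-t^{\operatorname{cpx}_\Gamma})^n / R(t)$ then yields part (1) with
\[
\deg Q \;\le\; \bigl(\beta + \deg R\bigr) + \bigl(n \cdot \operatorname{cpx}_\Gamma - \deg R\bigr) \;=\; \beta + n \cdot \operatorname{cpx}_\Gamma,
\]
and multiplying through further by $(1-t)$ gives part (2) with the same bound on the numerator.

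For the quasi-polynomial conclusion I would invoke the standard principle (cf.\ \cite{Sta96}) that if $\sum_{i \ge 0} a_i t^i = P(t)/(1 - t^N)^m$ with $\deg P \le D$, then the coefficients $a_i$ coincide with a quasi-polynomial of quasi-period $N$ for all $i > D - N m$. Applied directly to the formula in (2) with $N = \operatorname{cpx}_\Gamma$, $m = n$, and $D = \beta + n \cdot \operatorname{cpx}_\Gamma$, this yields that $s$ is a quasi-polynomial on $d > \beta$ with quasi-period $\operatorname{cpx}_\Gamma$. For (1), I would first rewrite the generating function of $b$ as
\[
\frac{Q(t)\bigl(1 + t + \cdots + t^{\operatorname{cpx}_\Gamma - 1}\bigr)}{(1 - t^{\operatorname{cpx}_\Gamma})^{n+1}},
\]
which is legitimate since $(1-t)$ divides $(1 - t^{\operatorname{cpx}_\Gamma})$; the new numerator has degree at most $\beta + (n+1)\operatorname{cpx}_\Gamma - 1$, and the same principle with $m = n+1$ then gives the threshold $d > \beta - 1$, again with quasi-period $\operatorname{cpx}_\Gamma$.

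Since Theorem \ref{thm:main}(3) does essentially all of the structural work, I expect no substantive obstacle in this corollary. The only real care required is bookkeeping of degrees through the successive rewrites of the generating function, so that the thresholds come out precisely as $\beta - 1$ for $b$ and $\beta$ for $s$ rather than weaker bounds; the key asymmetry comes from the extra factor of $(1-t)$ that distinguishes $b$ from $s$ and effectively shifts the quasi-polynomial threshold by one.
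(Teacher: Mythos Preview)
Your proposal is correct and follows essentially the same approach as the paper's proof, which is extremely terse (just three sentences: $R(t)$ divides $(1-t^{\operatorname{cpx}_\Gamma})^n$; $b = f_B$ so (1) follows from Theorem \ref{thm:main}(3); (2) follows from (1)). You have simply written out explicitly the divisibility argument, the degree bookkeeping, and the standard passage from rational generating functions to quasi-polynomial thresholds that the paper leaves implicit.
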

\begin{proof}
Note that the polynomial $R(t)$ in Theorem \ref{thm:main}(3) divides $( 1 - t^{\operatorname{cpx}_{\Gamma}} )^{n}$. 
Therefore, 
(1) follows from Theorem \ref{thm:main}(3) since $b = f_B$. 
(2) follows from (1). 
\end{proof}

\begin{rmk}\label{rmk:NSMN}
The proof of Corollary \ref{cor:main} does not rely on the result in \cite{NSMN21}. 
Therefore, the proof of Corollary \ref{cor:main} also gives a different proof of Theorem \ref{thm:NSMN}. 
\end{rmk}

\begin{ex}
We saw in Example \ref{ex:inv} that $C'_2 = 3$, $\beta = 25$ and $\operatorname{cpx}_{\Gamma} = 2$ 
for the Wakatsuki graph $\Gamma$ with the start point $x_0 = v'_2$. 
Therefore, Corollary \ref{cor:main} shows that the growth sequence $(s_{\Gamma, x_0, i})_i$  is 
a quasi-polynomial on $i > \beta = 25$, and 
$\operatorname{cpx}_{\Gamma} = 2$ is its quasi-period. 
Actually, it is known that $(s_{\Gamma, x_0, i})_i$ is a quasi-polynomial on $i \ge 3$, and $2$ is its period (see \cite{IN}*{Example 2.18}). 
\end{ex}

As a corollary of Theorem \ref{thm:free}, we give an algorithm to compute the precise value of $C_{2}(\Gamma, \Phi, x_0)$. 

\begin{thm}\label{thm:C2}
For $\sigma \in \operatorname{Facet}(P)$ and $\Delta \in T_{\sigma}$, 
we define 
\[
L_{\Delta} ^{\le \beta'} 
:= \left \{ z \in L_{\Delta} \ \middle | \ \text{$\operatorname{proj}_{\Delta, v}(z) \le \beta' _{\Delta, v}$ holds for any $v \in V(\Delta)$} \right \}. 
\]
We also define 
\[
L ^{\le \beta'} := 
\bigcup _{\substack{\sigma \in \operatorname{Facet}(P),\\ \Delta \in T_{\sigma}}}
L_{\Delta} ^{\le \beta'}.  
\]
Then, we have 
\[
C_{2}(\Gamma, \Phi, x_0) = 
\max \left \{ d_{\Gamma}(x_0, y) - d_{P, \Phi}(x_0, y) \ \middle | \ 
y \in V_{\Gamma} \cap \Phi ^{-1} \left( L ^{\le \beta'} \right) \right \}. 
\]

In particular, we have 
\[
C_{2}(\Gamma, \Phi, x_0) = \max \left \{ d_{\Gamma}(x_0, y) - d_{P, \Phi}(x_0, y) \ \middle | \ 
y \in B_{\lfloor \beta ' \rfloor} \right \}, 
\]
where we define 
\[
\beta ' := C'_2 + \max 
\Biggl \{ \sum _{v \in V(\Delta)} \beta' _{\Delta, v} \ \Bigg| \ \sigma \in \operatorname{Facet}(P), \ \Delta \in T_{\sigma} \Biggr \}. 
\]
\end{thm}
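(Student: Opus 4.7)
The inequality $C_2(\Gamma, \Phi, x_0) \ge \max\{\cdots\}$ is immediate for both formulae, as each right-hand side is a maximum over a subset of $V_\Gamma$. For the reverse inequality, the plan is to show that any $y \in V_\Gamma$ can be iteratively translated by lattice vectors of the form $-\operatorname{cpx}_\sigma(v) v$ until $\Phi(y)$ lands in $L^{\le \beta'}$, without changing the difference $d_\Gamma(x_0, y) - d_{P, \Phi}(x_0, y)$. The reduction to $B_{\lfloor \beta' \rfloor}$ will then follow from the bound $d_\Gamma \le C'_2 + d_{P, \Phi}$.

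\textbf{Reduction step.} Since $\Gamma$ is strongly connected we have $0 \in \operatorname{int}(P)$, so $L_{\mathbb{R}} = \bigcup_{\sigma \in \operatorname{Facet}(P),\, \Delta \in T_\sigma} L_\Delta$. Fix $\sigma$ and $\Delta$ with $\Phi(y) \in L_\Delta$. If $y \notin \Phi^{-1}(L^{\le \beta'})$, then in particular $\Phi(y) \notin L_\Delta^{\le \beta'}$, so some $v \in V(\Delta)$ satisfies $\operatorname{proj}_{\Delta, v}(\Phi(y)) > \beta'_{\Delta, v} = \beta_{\Delta, v} + \operatorname{cpx}_\sigma(v)$. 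Set $y' := y - \operatorname{cpx}_\sigma(v) v \in V_\Gamma$, which makes sense because $\operatorname{cpx}_\sigma(v) v \in L$ by construction in Lemma \ref{lem:r}. The projection at $v$ becomes $\operatorname{proj}_{\Delta, v}(\Phi(y')) > \beta_{\Delta, v} \ge 0$, and all other projections at vertices of $\Delta$ are unchanged, so $\Phi(y')$ still lies in $L_\Delta$. By Theorem \ref{thm:free}, $d_\Gamma(x_0, y) = d_\Gamma(x_0, y') + \operatorname{cpx}_\sigma(v)$. On the other hand, since $\sigma$ is a facet of $P$ with $0 \in \operatorname{int}(P)$, the Minkowski functional $d_P$ restricted to the cone $L_\sigma$ is linear, given by $d_P(z) = \sum_{u \in V(\Delta)} \operatorname{proj}_{\Delta, u}(z)$ for $z \in L_\Delta$; therefore $d_{P, \Phi}(x_0, y) = d_{P, \Phi}(x_0, y') + \operatorname{cpx}_\sigma(v)$. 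The difference is preserved under $y \mapsto y'$.

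\textbf{Termination and conclusion.} Iterating the reduction step keeps $\Phi(y)$ in the fixed cone $L_\Delta$ and strictly decreases the non-negative quantity $d_{P, \Phi}(x_0, y)$ by $\operatorname{cpx}_\sigma(v) \ge 1$ at each step. Hence the process must terminate, yielding some $y^* \in V_\Gamma \cap \Phi^{-1}(L^{\le \beta'})$ with
\[
d_\Gamma(x_0, y^*) - d_{P, \Phi}(x_0, y^*) = d_\Gamma(x_0, y) - d_{P, \Phi}(x_0, y),
\]
which proves the first equality. For the second equality, if $y^* \in \Phi^{-1}(L_\Delta^{\le \beta'})$, then
\[
d_\Gamma(x_0, y^*) \le C'_2 + d_{P, \Phi}(x_0, y^*) = C'_2 + \sum_{v \in V(\Delta)} \operatorname{proj}_{\Delta, v}(\Phi(y^*)) \le C'_2 + \sum_{v \in V(\Delta)} \beta'_{\Delta, v} \le \beta',
\]
so $y^* \in B_{\lfloor \beta' \rfloor}$, and the second equality follows.

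Once Theorem \ref{thm:free} is in hand, no single step is deep; the only points requiring care are verifying that the iteration remains in a fixed cone $L_\Delta$ and that the potential $d_{P, \Phi}(x_0, y)$ strictly decreases, both of which follow from the rigidity $\beta'_{\Delta, v} - \beta_{\Delta, v} = \operatorname{cpx}_\sigma(v)$ built into the definition of $\beta'_{\Delta, v}$.
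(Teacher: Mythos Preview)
Your proof is correct and follows essentially the same approach as the paper's. The only cosmetic difference is that the paper performs the reduction in a single step---defining $b_v := \max\bigl\{0, \lceil (\operatorname{proj}_{\Delta,v}(\Phi(y)) - \beta'_{\Delta,v})/\operatorname{cpx}_\sigma(v)\rceil\bigr\}$ for each $v \in V(\Delta)$ and setting $y' := y - \sum_v b_v \operatorname{cpx}_\sigma(v) v$ directly---whereas you iterate one vertex at a time and invoke a termination argument; both are straightforward consequences of Theorem~\ref{thm:free} and the identity $\beta'_{\Delta,v} - \beta_{\Delta,v} = \operatorname{cpx}_\sigma(v)$.
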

\begin{proof}
Let $y \in V_{\Gamma}$. 
Take $\sigma \in \operatorname{Facet}(P)$ and $\Delta \in T_{\sigma}$ such that $\Phi(y) \in L_{\Delta}$.
For each $v \in V(\Delta)$, we define 
\[
b_v := \max 
\left\{ 0, 
\left \lceil  \frac{\operatorname{proj}_{\Delta, v}(\Phi(y)) - \beta' _{\Delta, v}}{\operatorname{cpx}_{\sigma}(v)} 
\right \rceil \right\}. 
\]
We define
\[
y' := y - \sum _{v \in V(\Delta)} b_v \operatorname{cpx}_{\sigma}(v) v. 
\]
Then, we have $\Phi(y') \in L _{\Delta} ^{\le \beta'}$. By Theorem \ref{thm:free}, we have 
\[
d_{\Gamma}(x_0, y') = d_{\Gamma}(x_0, y) - \sum _{v \in V(\Delta)} b_v \operatorname{cpx}_{\sigma}(v). 
\]
On the other hand, we have 
\[
d_{P, \Phi}(x_0, y') = d_{P, \Phi}(x_0, y) - \sum _{v \in V(\Delta)} b_v \operatorname{cpx}_{\sigma}(v). 
\]
Therefore, we have 
\[
d_{\Gamma}(x_0, y') - d_{P, \Phi}(x_0, y') = d_{\Gamma}(x_0, y) - d_{P, \Phi}(x_0, y). 
\]
Hence, we have 
\begin{align*}
C_{2}(\Gamma, \Phi, x_0) 
&= \sup \bigl \{ d_{\Gamma}(x_0, y) - d_{P, \Phi}(x_0, y) \ \big | \ y \in V_{\Gamma} \bigr\} \\
&= \max \left \{ d_{\Gamma}(x_0, y) - d_{P, \Phi}(x_0, y) \ \middle | \ 
y \in V_{\Gamma} \cap \Phi ^{-1} \left( L ^{\le \beta'} \right) \right \}, 
\end{align*}
which proves the first assertion. 

For $y \in V_{\Gamma}$ with $\Phi(y) \in L_{\Delta} ^{\le \beta'}$, we have 
\[
d_{\Gamma}(x_0, y) \le C'_2 + d_{P, \Phi}(x_0, y) \le C'_2 +  \sum _{v \in V(\Delta)} \beta' _{\Delta, v}. 
\]
Therefore, we have $V_{\Gamma} \cap \Phi ^{-1} \left( L ^{\le \beta'} \right) \subset B_{\lfloor \beta ' \rfloor}$, 
which proves the second assertion. 
\end{proof}

\section{Examples} \label{section:ex}

In this section, using Corollary \ref{cor:main}, we calculate the growth series for some specific periodic graphs. 
In Subsection \ref{subsection:tilings}, we list the invariants and the growth series of seven periodic tilings from \cite{GS19} using a computer program explained in Remark \ref{rmk:implement} below. 
In Subsection \ref{subsection:tilings}, we also examine a $6$-uniform tiling. 
To the best of our knowledge, this is the first time the growth series for this example has been determined with a proof. 
In Subsection \ref{subsection:ex1}, we treat another tiling called the $(3^6;3^2.4.3.4;3^2.4.3.4)$ 3-uniform tiling. 
Its growth series can be computed with the same computer program, but we also give the details of the computation to illustrate our algorithm. 
In Subsections \ref{subsection:ex2} and \ref{subsection:ex3}, we treat two $3$-dimensional periodic graphs obtained by carbon allotropes. 
As far as we know, this is the first time the growth series in these two examples have been determined with proofs.

\begin{rmk}\label{rmk:implement}
When $n=2$, it is not difficult to implement our algorithm to compute the invariants $\beta$ and $\operatorname{cpx}_{\Gamma}$ as follows (cf.\ Subsection \ref{subsection:2dim}): 

\begin{enumerate}
\item 
When $n=2$, each facet $\sigma$ of $P := P_{\Gamma}$ is one dimensional. 
Therefore, the points in $\operatorname{Im}(\nu) \cap \sigma$ give a triangulation $T_{\sigma}$ of $\sigma$ with the required conditions $(\diamondsuit)_1$ and $(\diamondsuit)_2$, and no further subdivision is necessary.

\item
For $\sigma \in \operatorname{Facet}(P)$, $\Delta \in T_{\sigma}$ and $v \in V(\Delta)$, 
the set $\mathcal{H}'' _{\sigma, v}$ can be concretely given (see  Subsection \ref{subsection:2dim}). 

\item 
For $\sigma \in \operatorname{Facet}(P)$, $\Delta \in T_{\sigma}$, $v \in V(\Delta)$ and $F \in \mathcal{H}'' _{\sigma, v}$, 
the invariants $\operatorname{cpx}_{\sigma} (v)$, $r^F _{\Delta, v}$ and $s^F _{\Delta, v}$ can be computed by the construction in the proof of Lemma \ref{lem:rs}. 

\item
For $\sigma \in \operatorname{Facet}(P)$, $\Delta \in T_{\sigma}$, $v \in V(\Delta)$ and $F \in \mathcal{H}'' _{\sigma, v}$, 
the invariants $a^F _{\Delta, v}(v)$ and $h^F _{\Delta, v}$ can be computed by the construction in Subsection \ref{subsection:2dim}. 
\end{enumerate}

Furthermore, the first few terms of the growth sequence can be computed by the breadth-first search algorithm. 
Therefore, it is not difficult to implement an algorithm to compute the growth series of two dimensional periodic graphs. 
\end{rmk}

\subsection{2-dimensional periodic graphs}\label{subsection:tilings}
In \cite{GS19}, Goodman-Strauss and Sloane determine the growth sequences for seven specific periodic tilings (Figures \ref{fig:cairo}-\ref{fig:snub632}). 
The parallelogram drawn with red lines represents a fundamental region of the corresponding periodic graphs $\Gamma$ (see \cite{GS19} and \cite{IN}*{Subsection 5.1} for more information). 
With the help of a computer program (Remark \ref{rmk:implement} and Appendix \ref{section:a}), we can automatically compute the invariants and their growth series as in Table \ref{table:tilings}. 

We also examine a $6$-uniform tiling illustlated in Figure \ref{fig:t6uni673}, which is the tiling \#673 in the Galebach list of 673 $6$-uniform tilings \cite{Gal} (see also \underline{A313961} in the OEIS \cite{OEIS}). 
The invariants and the growth series are computed as in Table \ref{table:tilings2}. 
From the form of these growth series, it can be determined that the actual (minimum) period is $36$ for all cases.
To the best of our knowledge, this is the first time the growth series for this example has been determined with a proof. 

\begin{figure}[htbp]
\begin{tabular}{cc}
\begin{minipage}{0.5\hsize}
\centering
\includegraphics[height=6cm]{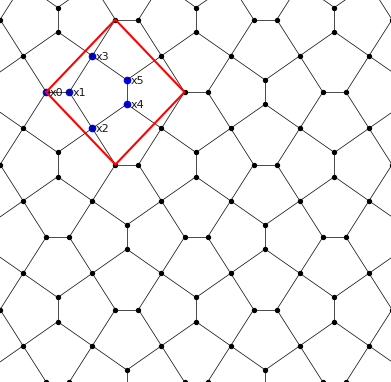}
\caption{The Cairo tiling.}
\label{fig:cairo}
\end{minipage}&
\begin{minipage}{0.5\hsize}
\centering
\includegraphics[height=6cm]{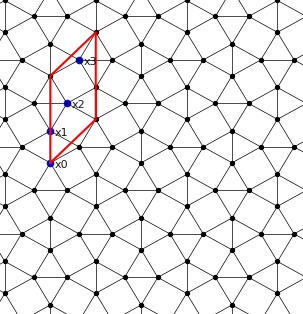}
\caption{The $3^2.4.3.4$ uniform tiling.}
\label{fig:32434}
\end{minipage}\\ 

\begin{minipage}{0.5\hsize}

\vspace{5mm}

\centering
\includegraphics[height=6cm]{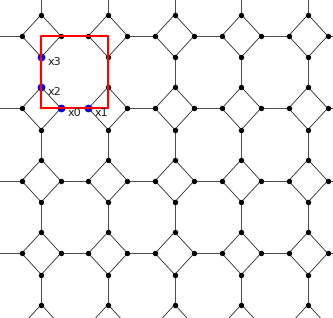}
\caption{The $4.8^2$ uniform tiling.}
\label{fig:482}
\end{minipage}&
\begin{minipage}{0.5\hsize}

\vspace{5mm}

\centering
\includegraphics[height=6cm]{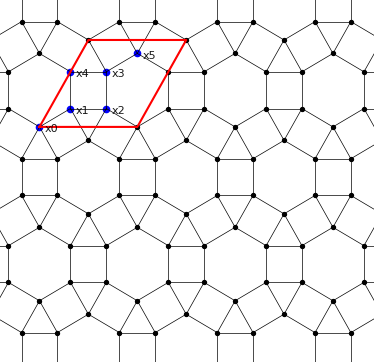}
\caption{The $3.4.6.4$ uniform tiling.}
\label{fig:3464}
\end{minipage}\\

\begin{minipage}{0.5\hsize}

\vspace{5mm}

\centering
\includegraphics[height=6cm]{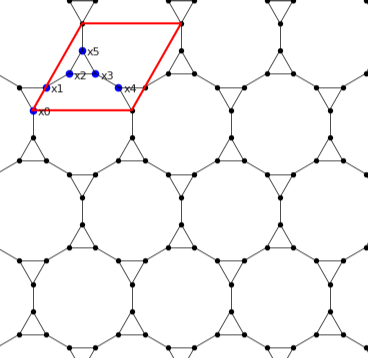}
\caption{The $3.12^2$ uniform tiling.}
\label{fig:3122}
\end{minipage}&
\begin{minipage}{0.5\hsize}

\vspace{5mm}

\centering
\includegraphics[height=6cm]{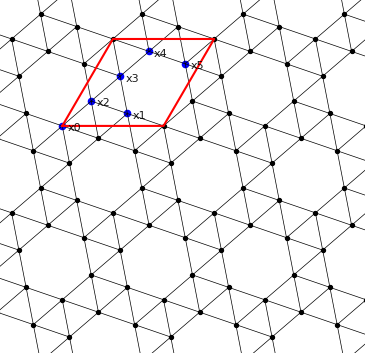}
\caption{The $3^4.6$ uniform tiling.}
\label{fig:346}
\end{minipage}\\
\end{tabular}
\end{figure}

\begin{figure}[htbp]
\begin{tabular}{cc}
\begin{minipage}{0.5\hsize}

\vspace{5mm}

\centering
\includegraphics[height=6cm]{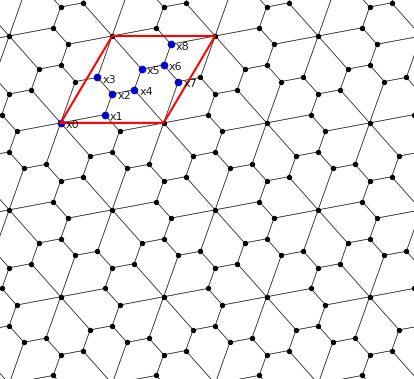}
\caption{The snub-632 3-uniform tiling.}
\label{fig:snub632}
\end{minipage}&
\begin{minipage}{0.5\hsize}

\vspace{5mm}

\centering
\includegraphics[height=6cm]{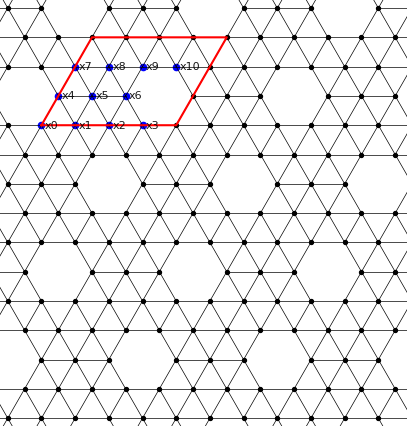}
\caption{The 6-uniform tiling (\#673).}
\label{fig:t6uni673}
\end{minipage}
\end{tabular}
\end{figure}

\renewcommand{\arraystretch}{1.5}
\begin{table}[htp]

\vspace{5mm}

\centering
\begin{tabular}{l||cccccc} \hline
\multirow{2}{*}{Tiling / start pt.} & $\#(V_{\Gamma /L})$ & $C_1$ & $C'_2$ & $\beta$ & $\operatorname{cpx}_{\Gamma}$  \\
& \multicolumn{5}{c}{Growth series} \\
\hline \hline
\multirow{2}{*}{Fig.\ \ref{fig:cairo} / $x_2, x_3$} & 6 & 1/3 & 7 & 72 & 12 \\
& \multicolumn{5}{c}{\Large $\frac{1+ 2t + t^2}{(1 - t)^2}$} \\
\hline
\multirow{2}{*}{Fig.\ \ref{fig:cairo} / otherwise} & 6 & 2/3 & 7 & 75 & 12 \\
& \multicolumn{5}{c}{\Large $\frac{1+2 t+5 t^{2}+4 t^{3}+2 t^{4}+3 t^{5}-t^{7}}{(1 - t)(1 - t^4)}$} \\
\hline
\multirow{2}{*}{Fig.\ \ref{fig:32434} / all} & 4 &  0.36... & 4 & 37.5... & 6 \\
& \multicolumn{5}{c}{\Large $\frac{1+4 t+6 t^{2}+4 t^{3}+t^{4}}{(1-t)(1-t^3)}$} \\
\hline
\multirow{2}{*}{Fig.\ \ref{fig:482} / all} & 4 & 0.58... & 4 & 37.1... & 12 \\
& \multicolumn{5}{c}{\Large $\frac{1+2t+2t^{2}+2t^{3}+t^{4}}{(1-t)(1-t^3)}$} \\
\hline
\multirow{2}{*}{Fig.\ \ref{fig:3464} / all} & 6 & 0.46... & 6 & 101.5... & 6 \\
& \multicolumn{5}{c}{\Large $\frac{1 + 2t + t^2}{(1-t)^2}$} \\
\hline
\multirow{2}{*}{Fig.\ \ref{fig:3122} / all} & 6 & 0.38... & 8 & 85.8... & 4 \\
& \multicolumn{5}{c}{\Large $\frac{1+3 t+4 t^{2}+6 t^{3}+6 t^{4}+6 t^{5}+6 t^{6}+3 t^{7}+3 t^{8}-2 t^{10}}{(1-t^4)^2}$} \\
\hline
\multirow{2}{*}{Fig.\ \ref{fig:346} / all} & 6 & 5/7 & 6 & 878/7 & 15 \\
& \multicolumn{5}{c}{\Large $\frac{1+4 t +4 t^{2}+6 t^{3}+4 t^{4}+4 t^{5}+t^{6}}{(1-t)(1-t^5)}$} \\
\hline
\multirow{2}{*}{Fig.\ \ref{fig:snub632} / $x_0$} & 9 & 3/7 & 10 & 626/7 & 3 \\
& \multicolumn{5}{c}{\Large $\frac{1+6 t+12 t^{2}+10 t^{3}+12 t^{4}+12 t^{5}+t^{6}}{(1 - t^3)^2}$} \\
\hline
\multirow{2}{*}{Fig.\ \ref{fig:snub632} / $x_2, x_6$} & 9 & 3/7 & 10 & 626/7 & 3 \\
& \multicolumn{5}{c}{\Large $\frac{1+3 t+6 t^{2}+13 t^{3}+15 t^{4}+6 t^{5}+4 t^{6}+9 t^{7}-3 t^{10}}{(1- t^3)^2}$} \\
\hline
\multirow{2}{*}{Fig.\ \ref{fig:snub632} / otherwise} & 9 & 6/7 & 10 & 650/7 & 3 \\
& \multicolumn{5}{c}{\Large $\frac{1+3 t+9 t^{2}+13 t^{3}+12 t^{4}+9 t^{5}+8 t^{6}+4 t^{7}-t^{8}-2 t^{9}-2 t^{10}}{(1 -t^3)^2}$} \\ 
\hline
\end{tabular}

\vspace{3mm}

\caption{Growth series of the seven tilings in \cite{GS19}.}
\label{table:tilings}
\end{table}
\renewcommand{\arraystretch}{1}

\renewcommand{\arraystretch}{1.8}
\begin{table}[htp]

\vspace{5mm}

\centering
\begin{sideways}
\begin{tabular}{l||cccccc} \hline
\multirow{2}{*}{Start} & OEIS\# & $C_1$ & $C'_2$ & $\beta$ & $\operatorname{cpx}_{\Gamma}$ & $\deg R$ (see Theorem \ref{thm:main}(3))  \\
& \multicolumn{5}{c}{Growth series} \\
\hline \hline
\multirow{2}{*}{$x_0, x_9$} & \underline{A314154} & 5/6 & 11 & 2648.4... & $2^3 \cdot 3^2 \cdot 5 \cdot 7 \cdot 11$ & 132 \\
& \multicolumn{6}{c}{\Large $\frac{1+5 t+10 t^{2}+10 t^{3}+5 t^{4}-4 t^{5}-9 t^{6}-12 t^{7}-6 t^{8}-6 t^{10}-12 t^{11}-9 t^{12}-4 t^{13}+5 t^{14}+10 t^{15}+10 t^{16}+5 t^{17}+t^{18}}{(1 - t^2)(1-t^3)(1-t^4)(1-t^9)}$} \\
\hline
\multirow{2}{*}{$x_1, x_8$} & \underline{A315346} & 3/4 & 11 & 2644.1... & $2^3 \cdot 3^2 \cdot 5 \cdot 7 \cdot 11$ & 132 \\
& \multicolumn{6}{c}{\Large $\frac{1+6 t+10 t^{2}+15 t^{3}+14 t^{4}+7 t^{5}-5 t^{6}-15 t^{7}-15 t^{8}-11 t^{9}-14 t^{10}-11 t^{11}-15 t^{12}-15 t^{13}-5 t^{14}+7 t^{15}+14 t^{16}+15 t^{17}+10 t^{18}+6 t^{19}+t^{20}}{(1 - t^3)(1 - t^4)^2(1-t^9)}$} \\
\hline
\multirow{2}{*}{$x_2, x_7$} & \underline{A315356} & 1/2 & 11 & 2631.3... & $2^3 \cdot 3^2 \cdot 5 \cdot 7 \cdot 11$ & 132 \\
& \multicolumn{6}{c}{\Large $\frac{1+6 t+9 t^{2}+10 t^{3}+6 t^{4}-7 t^{5}-9 t^{6}-10 t^{7}-6 t^{8}-6 t^{10}-10 t^{11}-9 t^{12}-7 t^{13}+6 t^{14}+10 t^{15}+9 t^{16}+6 t^{17}+t^{18}}{(1 - t^2)(1-t^3)(1-t^4)(1-t^9)}$} \\
\hline
\multirow{2}{*}{$x_3, x_{10}$} & \underline{A313961} & 7/12 & 11 & 2635.6... & $2^3 \cdot 3^2 \cdot 5 \cdot 7 \cdot 11$ & 132 \\
& \multicolumn{6}{c}{\Large $\frac{1+5 t+10 t^{2}+16 t^{3}+14 t^{4}+6 t^{5}-3 t^{6}-12 t^{7}-18 t^{8}-13 t^{9}-13 t^{10}-11 t^{11}-14 t^{12}-14 t^{13}-6 t^{14}+3 t^{15}+11 t^{16}+19 t^{17}+13 t^{18}+6 t^{19}+2 t^{20}-t^{21}-t^{22}}{(1 - t^3)(1 - t^4)^2(1-t^9)}$} \\
\hline
\multirow{2}{*}{$x_4, x_6$} & \underline{A314064} & 2/3 & 11 & 2639.8... & $2^3 \cdot 3^2 \cdot 5 \cdot 7 \cdot 11$ & 132 \\
& \multicolumn{6}{c}{\Large $\frac{1+5 t+10 t^{2}+9 t^{3}+4 t^{4}-t^{5}-10 t^{6}-9 t^{7}-5 t^{8}-8 t^{9}-4 t^{10}-8 t^{11}-10 t^{12}-3 t^{13}+2 t^{14}+8 t^{15}+11 t^{16}+6 t^{17}+3 t^{18}-t^{21}}{(1 - t^2)(1-t^3)(1-t^4)(1-t^9)}$} \\
\hline
\multirow{2}{*}{$x_5$} & \underline{A315238} & 5/12 & 11 & 2627.0... & $2^3 \cdot 3^2 \cdot 5 \cdot 7 \cdot 11$ & 132 \\
& \multicolumn{6}{c}{\Large $\frac{1+6 t+10 t^{2}+13 t^{3}+16 t^{4}+6 t^{5}-4 t^{6}-12 t^{7}-17 t^{8}-13 t^{9}-12 t^{10}-11 t^{11}-15 t^{12}-12 t^{13}-8 t^{14}+2 t^{15}+14 t^{16}+15 t^{17}+12 t^{18}+10 t^{19}+t^{20}-2 t^{22}}{(1 - t^3)(1 - t^4)^2(1-t^9)}$} \\
\hline
\end{tabular}
\end{sideways}

\vspace{3mm}

\caption{Growth series of the $6$-uniform tiling \#673 (Fig.\ \ref{fig:t6uni673}).}
\label{table:tilings2}
\end{table}
\renewcommand{\arraystretch}{1}

\subsection{The $(3^6;3^2.4.3.4;3^2.4.3.4)$ 3-uniform tiling}\label{subsection:ex1}

The tiling illustrated in Figure \ref{fig:3uni1} is called the $(3^6;3^2.4.3.4;3^2.4.3.4)$ 3-uniform tiling (see \cite{Cha89}). 
Let $(\Gamma, L)$ be the corresponding $2$-dimensional (unweighted and undirected) periodic graph, 
and let $\Phi$ be the periodic realization shown in the figure. 
The parallelogram drawn with red lines represents a fundamental region of the periodic graph $\Gamma$. 
We have $\#(V_{\Gamma}/L) = 13$. 
We will determine the growth series with the start point $x_0$ shown in Figure \ref{fig:3uni1}.

\begin{figure}[htbp]
\centering
\includegraphics[width=8cm]{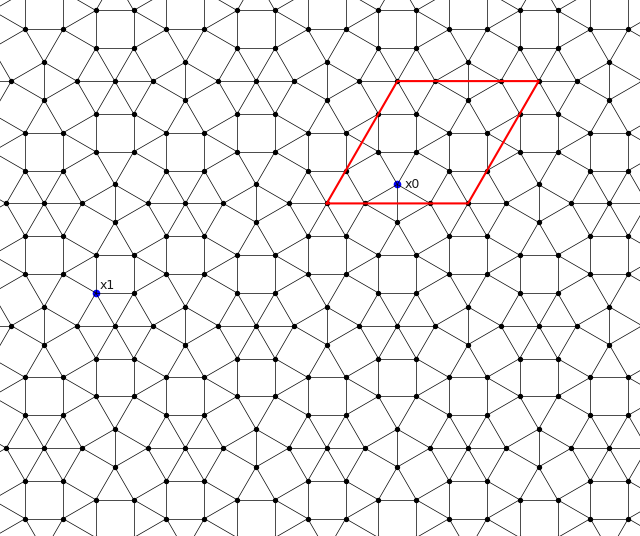}
\caption{The $(3^6;3^2.4.3.4;3^2.4.3.4)$ 3-uniform tiling.}
\label{fig:3uni1}
\end{figure}

With the help of a computer program, 
$\operatorname{Im}(\nu)$ can be computed as in Figure \ref{fig:3uniuvec}. 
The growth polytope $P := P_{\Gamma}$ is a dodecagon, whose vertices are shown in red. 
Note that the dihedral group $D_6$ acts on $\operatorname{Im}(\nu)$ by the rotations and the reflections. 

\begin{figure}[htbp]
\centering
\includegraphics[width=8cm]{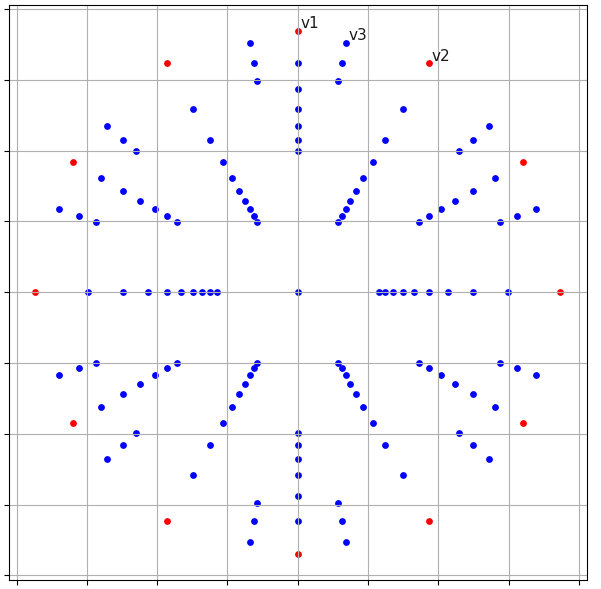}
\caption{$\operatorname{Im}(\nu)$.}
\label{fig:3uniuvec}
\end{figure}

With the help of a computer program, 
$C_1$ and $C'_2$ can be computed according to Proposition \ref{prop:C2} as follows:
\begin{align*}
C_1 &= d_{P, \Phi}(x_0, x_1) - d_{\Gamma}(x_0, x_1) = \left( 10 - \frac{\sqrt{3}}{3} \right) - 9 = 1 - \frac{\sqrt{3}}{3} = 0.4226...\ , \\
C'_2 &= d' (x_0, x_0) - d_{P, \Phi} (x_0, x_0) = 13 - 0 = 13. 
\end{align*}
Here, $x_1$ is a vertex shown in Figure \ref{fig:3uni1}. 

We define $v_1, v_2, v_3 \in \operatorname{Im}(\nu)$ as in Figure \ref{fig:3uniuvec}. 
Let $\sigma \in \operatorname{Facet}(P)$ denote the face of $P$ satisfying $V(\sigma) = \{ v_1, v_2 \}$. 
We take a triangulation $T_{\sigma} = \{ \Delta_1, \Delta_2 \}$ 
such that $V(\Delta _1) = \{ v_1, v_3 \}$ and $V(\Delta _2) = \{ v_2, v_3 \}$. 
In what follows, we shall calculate $\beta$ according to the way in Subsection \ref{subsection:2dim}. 

\begin{figure}[htbp]
\centering
\includegraphics[width=7cm]{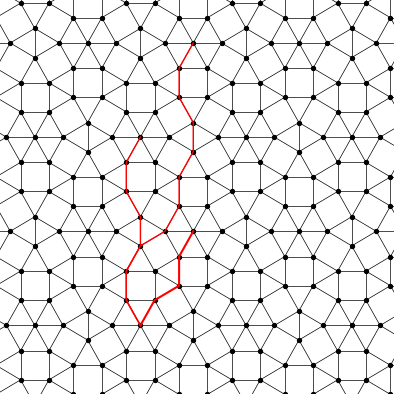}
\caption{Walks that give $u_1$, $u_2$ and $u_3$.}
\label{fig:3uni2}
\end{figure}

First, we shall calculate the invariants $s^F _{\sigma, v_i}$'s according to the construction in the proof of Lemma \ref{lem:r}. 
With the help of a computer program, we have 
\begin{align*}
\operatorname{Len}(\{ v_1 \}) &= \{ 7 \}, \quad
\operatorname{Len}(\{ v_2 \}) = \{ 4 \}, \quad
\operatorname{Len}(\{ v_3 \}) = \{ 11 \}, \\
\operatorname{num}(v_1,7) &=6, \quad
\operatorname{num}(v_2,4) =5, \quad
\operatorname{num}(v_3,11) =6. 
\end{align*}
Figure \ref{fig:3uni2} shows examples of walks in $\Gamma$ that give $v_1$, $v_2$ and $v_3$. 
Since we have $7 v_1 + 4 v_2 = 11 v_3$, 
we have 
\begin{align*}
\operatorname{cpx}_{\sigma}(v_1) &= m_{\sigma, v_1}(v_1) = m_{\sigma, v_3}(v_1) = 7, \\
\operatorname{cpx}_{\sigma}(v_2) &= m_{\sigma, v_2}(v_2) = m_{\sigma, v_3}(v_2) = 4, \\
\operatorname{cpx}_{\sigma}(v_3) &= m_{\sigma, v_3}(v_3) = 11. 
\end{align*}
Therefore, we have 
\[
s^{\{ v_1 \}}_{\sigma, v_1} = 42, \quad 
s^{\{ v_2 \}}_{\sigma, v_2} = 20, \quad
s^{\{ v_1, v_3 \}}_{\sigma, v_3} = 42+66=108, \quad
s^{\{ v_2, v_3 \}}_{\sigma, v_3} = 20+66=86. 
\]

\begin{table}[h]
  \centering

{\renewcommand\arraystretch{1.3}
\begin{tabular}{c||ccccc}
    \hline
    $v$ & $v_1$ & $v_2$ & \multicolumn{3}{c}{$v_3$} \\ 
    $u$ & $v_1$ & $v_2$ & $v_1$ & $v_2$ & $v_3$ \\
    \hline \hline
    $\operatorname{cpx}_{\sigma}(v)$ & $7$ & $4$ & \multicolumn{3}{c}{$11$} \\
	$m_{\sigma, v}(u)$ & $7$ & $4$ & $7$ & $4$ & $11$\\
    \hline
\end{tabular}
}

\vspace{3mm}

{\renewcommand\arraystretch{1.3}
\begin{tabular}{c||cccc}
    \hline
    $v$ & $v_1$ & $v_2$ & \multicolumn{2}{c}{$v_3$} \\ 
    $F$ & $\{ v_1 \}$ & $\{ v_2 \}$ & $\{ v_1 , v_3 \}$ & $\{ v_2, v_3 \}$ \\
    \hline \hline
    $s^F _{\sigma, v}$ & $42$ & $20$ & $108$ & $86$ \\
    \hline
\end{tabular}
}

\vspace{3mm}

{\renewcommand\arraystretch{1.3}
\begin{tabular}{c||ccc}
    \hline
    $v$ & $v_1$ & \multicolumn{2}{c}{$v_3$} \\ 
    $F$ & $\{ v_1 \}$ & $\{ v_1 , v_3 \}$ & $\{ v_2, v_3 \}$ \\
    \hline \hline
    $a^F _{\Delta _1, v}(v)$ & $\frac{21}{22}$ & $\frac{22}{23}$ & $\frac{22}{23}$ \\
    $h^F _{\Delta _1, v}$ & $\frac{21}{22}$ & $\frac{14}{15}$ & $\frac{8}{9}$ \\
    $\alpha ^{\prime F} _{\Delta _1, v}$ & $22 C_1 + 21 C'_2 + 54$ & $\frac{165}{7}C_1 + 22C'_2 + \frac{1320}{7}$ & $\frac{99}{4}C_1 + 22C'_2 + \frac{539}{2}$ \\
	$\alpha ' _{\Delta _1, v}$ & $22 C_1 + 21 C'_2 + 54$ & \multicolumn{2}{c}{$\frac{99}{4}C_1 + 22C'_2 + \frac{539}{2}$}\\
	$\beta_{\Delta _1, \Delta _1}$ & \multicolumn{3}{c}{$\frac{187}{4} C_1 + 43C'_2 + \frac{611}{2}$}\\
    \hline
\end{tabular}
}

\vspace{3mm}

{\renewcommand\arraystretch{1.3}
\begin{tabular}{c||ccc}
    \hline
    $v$ & $v_2$ & \multicolumn{2}{c}{$v_3$} \\ 
    $F$ & $\{ v_2 \}$ & $\{ v_1 , v_3 \}$ & $\{ v_2, v_3 \}$ \\
    \hline \hline
    $a^F _{\Delta _2, v}(v)$ & $\frac{10}{11}$ & $\frac{22}{23}$ & $\frac{22}{23}$ \\
    $h^F _{\Delta _2, v}$ & $\frac{10}{11}$ & $\frac{14}{15}$ & $\frac{8}{9}$ \\
    $\alpha ^{\prime F} _{\Delta _2, v}$ & $11 C_1 + 10 C'_2 + 32$ & $\frac{165}{7}C_1 + 22C'_2 + \frac{1320}{7}$ & $\frac{99}{4}C_1 + 22C'_2 + \frac{539}{2}$ \\
	$\alpha ' _{\Delta _2, v}$ & $11 C_1 + 10 C'_2 + 32$ & \multicolumn{2}{c}{$\frac{99}{4}C_1 + 22C'_2 + \frac{539}{2}$}\\
	$\beta_{\Delta _2, \Delta _2}$ & \multicolumn{3}{c}{$\frac{143}{4} C_1 + 32 C'_2 + \frac{573}{2}$}\\
    \hline
\end{tabular}
}

\vspace{4mm}
\caption{Invariants for $\Gamma$.}
\label{table:ah1}
\end{table}

Next, we have 
\begin{align*}
\operatorname{Im}(\nu) \setminus \{ v_1 \} &\subset H \left( \frac{21}{22}v_1, v_3 \right), \\
\operatorname{Im}(\nu) \setminus \{ v_2 \} &\subset H \left( \frac{10}{11}v_2, v_3 \right), \\
\operatorname{Im}(\nu) \setminus \{ v_1, v_3 \} &\subset H \left( \frac{14}{15}v_1, \frac{22}{23}v_3 \right) = H \left( v_2, \frac{22}{23}v_3 \right), \\
\operatorname{Im}(\nu) \setminus \{ v_2, v_3 \} &\subset H \left( v_1, \frac{22}{23}v_3 \right) = H \left( \frac{8}{9}v_2, \frac{22}{23}v_3 \right).
\end{align*}
Hence, the invariants $a_{\Delta, v}^F(v)$'s and $h_{\Delta, v}^F$'s are given as in Table \ref{table:ah1}. 
According to these $a_{\Delta, v}^F(v)$'s and $h_{\Delta, v}^F$'s, 
the invariants $\alpha ^{\prime F} _{\Delta , v}$'s, $\alpha ' _{\Delta , v}$'s and $\beta _{\Delta, \Delta}$'s are computed as in Table \ref{table:ah1}.

By symmetry, we have 
\begin{align*}
\beta 
= C'_2 + \max \{ \beta _{\Delta_1, \Delta_1}, \beta _{\Delta_2, \Delta_2} \}
= C'_2 + \beta _{\Delta_1, \Delta_1} 
= \frac{3697}{4} - \frac{187}{12} \sqrt{3} = 897.258...\ . 
\end{align*}

Therefore, by Corollary \ref{cor:main} (and Theorem \ref{thm:main}(3)), 
it follows that the growth series $G_{\Gamma, x_0}$ is of the form 
\[
G_{\Gamma, x_0}(t) = \frac{Q(t)}{(1-t^4)(1-t^7)(1-t^{11})}, 
\]
where $Q(t)$ is a polynomial of degree $\deg Q \le \beta + 22$. 

With the help of a computer program (breadth-first search algorithm), 
the first $\lfloor \beta \rfloor + 22 + 1$ ($= 920$) terms of the growth sequence $(s_{\Gamma, x_0, i})_{i \ge 0}$ can be computed. 
Using them, we can calculate $G_{\Gamma, x_0}(t)$ as follows:  
\begin{align*}
G_{\Gamma, x_0}(t) 
&=  \frac{\bigl( \text{The terms of $(1-t^4)(1-t^7)(1-t^{11}) \sum _{i = 0}^{919}  s_{\Gamma, x_0, i} t^i$ of degree $919$ or less.} \bigr)}{(1-t^4)(1-t^7)(1-t^{11})} \\
&= \frac{1+4 t+8 t^{2}+9 t^{3}+13 t^{4}+8 t^{5}+13 t^{6}+9 t^{7}+8 t^{8}+4 t^{9}+t^{10}}{1-t+t^{2}-t^{3}-t^{7}+t^{8}-t^{9}+t^{10}} \\
&= \frac{1+5 t+12 t^{2}+17 t^{3}+22 t^{4}+21 t^{5}+21 t^{6}+22 t^{7}+17 t^{8}+12 t^{9}+5 t^{10}+t^{11}}{(1-t^4)(1-t^7)}.
\end{align*}
By the form of $G_{\Gamma, x_0}(t)$, 
we can conclude that the growth sequence $(s_{\Gamma, x_0, i})_{i \ge 0}$ is a quasi-polynomial on $i \ge 1$, 
and $28$ is its period. 

\begin{rmk}
\begin{enumerate}
\item 
In \cite{IN}*{Subsection 5.2}, we obtain the same formula for $G_{\Gamma, x_0}$, but by a different method. 

\item 
By Theorem \ref{thm:C2}, we can obtain $C_2 = 0.4226...\ $. 
\end{enumerate}
\end{rmk}

\subsection{The carbon allotrope K6} \label{subsection:ex2}
In this subsection, we consider the $3$-dimensional periodic graph $\Gamma$ shown in Figure \ref{fig:sacada12_1}. 
This graph corresponds to a carbon allotrope called the K6 carbon (\#12 in {\sf SACADA} database). 
In {\sf SACADA} database \cite{SACADA}, the fundamental region of the K6 carbon is taken as shown in Figure \ref{fig:sacada12_1}. 
In what follows, we will proceed with this fundamental region, 
although it is possible to replace it with a smaller fundamental region. 
Then, we have $\#(V_{\Gamma}/L) = 12$. 
Note that all vertices of $\Gamma$ are symmetric, and hence, 
the growth sequence does not depend on the choice of its start point $x_0$. 

\begin{figure}[htbp]
\centering
\includegraphics[width=8cm]{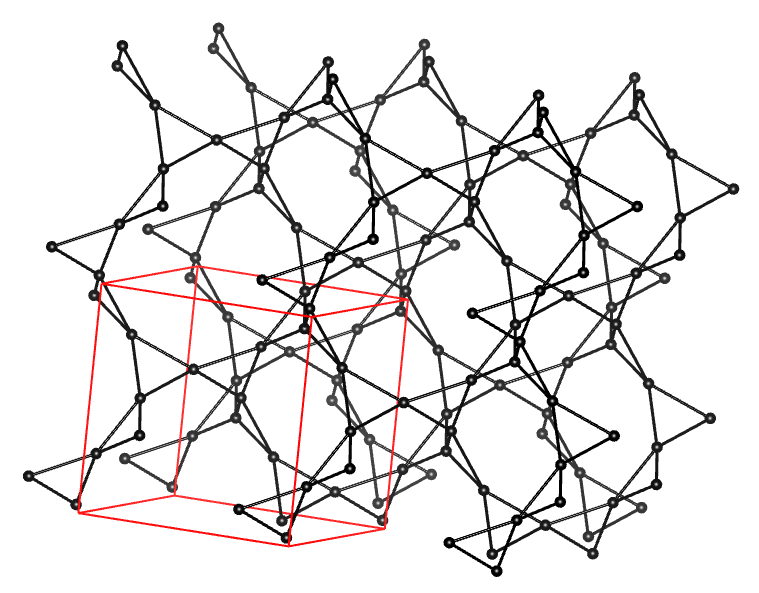}
\caption{The K6 carbon.}
\label{fig:sacada12_1}
\end{figure}

With the help of a computer program, 
$\operatorname{Im}(\nu)$ can be computed as in Figure \ref{fig:sacada12uvec}. 
The growth polytope $P := P_{\Gamma}$ has $14$ vertices (marked in red) and $24$ facets, and each facet is a triangle as in Figure \ref{fig:sacada12p}. 
Note that $\operatorname{Im}(\nu)$ has a symmetry such that the $24$ facets of $P$ are symmetric. 

\begin{figure}[htbp]
\centering
\includegraphics[width=9cm]{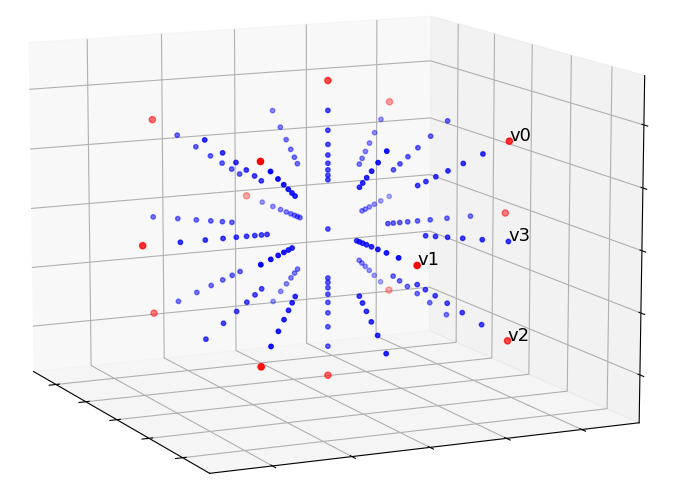}
\caption{$\operatorname{Im}(\nu)$.}
\label{fig:sacada12uvec}
\end{figure}

\begin{figure}[htbp]
\centering
\includegraphics[width=7cm]{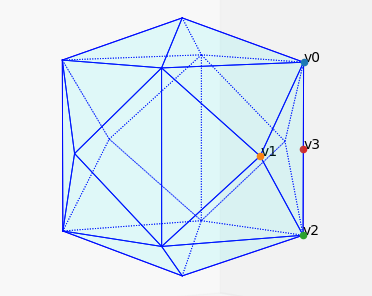}
\caption{$P$.}
\label{fig:sacada12p}
\end{figure}

With the help of a computer program, 
$C_1$ and $C'_2$ can be computed according to Proposition \ref{prop:C2} as follows:
\[
C_1 = 0.5 , \qquad C'_2 = 13. 
\]

We define $v_0, v_1, v_2, v_3 \in \operatorname{Im}(\nu)$ as in Figure \ref{fig:sacada12uvec}. 
Let $\sigma \in \operatorname{Facet}(P)$ denote the face of $P$ satisfying $V(\sigma) = \{ v_0, v_1, v_2 \}$. 
Then, we have $\operatorname{Im}(\nu) \cap \sigma = \{ v_0, v_1, v_2, v_3 \}$. 
We take a triangulation $T_{\sigma} = \{ \Delta_1, \Delta_2 \}$ 
such that $V(\Delta _1) = \{ v_1, v_2, v_3 \}$ and $V(\Delta _2) = \{ v_0, v_1, v_3 \}$. 
Then, $T_{\sigma}$ satisfies the required conditions $(\diamondsuit)_1$ and $(\diamondsuit)_2$. 
Note that $v_3$ is the midpoint of $v_2$ and $v_0$. 
Furthermore, $\operatorname{Im}(\nu)$ has a symmetry such that the three points $v_1$, $v_2$ and $v_3$ 
are translated to $v_1$, $v_0$ and $v_3$. 
In what follows, we shall compute the invariants for $\Delta _1$. 

First, we shall calculate the invariants $s^F _{\sigma, v_i}$'s according to the construction in the proof of Lemma \ref{lem:r}. 
We have 
\[
\mathcal{H}''_{\sigma, v_1} = \{ \{ v_1 \} \}, \quad
\mathcal{H}''_{\sigma, v_2} = \{ \{ v_2 \} \}, \quad
\mathcal{H}''_{\sigma, v_3} = \{ \{ v_0, v_3 \}, \{ v_2, v_3 \} \}. 
\]
With the help of a computer program, we have 
\begin{align*}
\operatorname{Len}(\{ v_1 \}) &= \{ 4 \}, \quad
\operatorname{Len}(\{ v_0 \}) = \operatorname{Len}(\{ v_2 \}) = \operatorname{Len}(\{ v_3 \}) = \{ 6 \}, \\
\operatorname{num}(v_0,6) &= \operatorname{num}(v_2,6)=1, \quad
\operatorname{num}(v_1,4) = \operatorname{num}(v_3,6)=2. 
\end{align*}
Since $v_3$ is the midpoint of $v_2$ and $v_0$, we have 
\begin{align*}
\operatorname{cpx}_{\sigma}(v_1) &= m_{\sigma, v_1}(v_1) = 4, \quad
\operatorname{cpx}_{\sigma}(v_2) = m_{\sigma, v_2}(v_2) = 6, \\
\operatorname{cpx}_{\sigma}(v_3) &= m_{\sigma, v_3}(v_3) = 12, \quad m_{\sigma, v_3}(v_0) = m_{\sigma, v_3}(v_2) = 6. 
\end{align*}
Therefore, we have 
\[
s^{\{ v_1 \}}_{\sigma, v_1} = 8, \quad 
s^{\{ v_2 \}}_{\sigma, v_2} = 6, \quad
s^{\{ v_0, v_3 \}}_{\sigma, v_3} = s^{\{ v_2, v_3 \}}_{\sigma, v_3} = 6 + 18 = 24. 
\]

Next, we have 
\begin{align*}
\operatorname{Im}(\nu) \setminus \{ v_1 \} &\subset H \left( \frac{4}{5} v_1 , v_2, v_3 \right), \\
\operatorname{Im}(\nu) \setminus \{ v_2 \} &\subset H \left( v_1 , \frac{6}{7} v_2, v_3 \right), \\
\operatorname{Im}(\nu) \setminus \{ v_0, v_3 \} &\subset H \left( v_1 , v_2, \frac{6}{7}v_0 \right) = H \left( v_1 , v_2, \frac{12}{13} v_3 \right), \\
\operatorname{Im}(\nu) \setminus \{ v_2, v_3 \} &\subset H \left( v_0 , v_1, \frac{6}{7}v_2 \right) = H \left( v_1 , \frac{6}{7} v_2, \frac{12}{13} v_3 \right). 
\end{align*}
Therefore, we can take the invariants $a_{\Delta _1, v}^F (v)$'s and $h_{\Delta _1, v}^F$'s as in Table \ref{table:ah2}. 
According to these $a_{\Delta _1, v}^F (v)$'s and $h_{\Delta _1, v}^F$'s, 
the invariants $\alpha^{\prime F} _{\Delta _1, v}$'s and $\alpha' _{\Delta _1, v}$'s are computed as in Table \ref{table:ah2}. 
\begin{table}[h]
  \centering
{\renewcommand\arraystretch{1.3}
\begin{tabular}{c||ccccc}
    \hline
    $v$ & $v_1$ & $v_2$ & \multicolumn{3}{c}{$v_3$} \\ 
    $u$ & $v_1$ & $v_2$ & $v_0$ & $v_2$ & $v_3$ \\
    \hline \hline
    $\operatorname{cpx}_{\sigma} (v)$ & $4$ & $6$ & \multicolumn{3}{c}{$12$} \\
    $m_{\sigma, v} (u)$ & $4$ & $6$ & $6$ & $6$ & $12$ \\ 
    \hline
\end{tabular}
}

\vspace{2mm}

{\renewcommand\arraystretch{1.3}
\begin{tabular}{c||cccc}
    \hline
    $v$ & $v_1$ & $v_2$ & \multicolumn{2}{c}{$v_3$} \\ 
    $F$ & $\{ v_1 \}$ & $\{ v_2 \}$ & $\{ v_0, v_3 \}$ & $\{ v_2, v_3 \}$ \\
    \hline \hline
    $s_{\sigma , v}^F (v)$ & $8$ & $6$ & $24$ & $24$ \\
    $a_{\Delta _1, v}^F (v)$ & $\frac{4}{5}$ & $\frac{6}{7}$ & $\frac{12}{13}$ & $\frac{12}{13}$ \\ 
    $h_{\Delta _1, v}^F$ & $\frac{4}{5}$ & $\frac{6}{7}$ & $\frac{6}{7}$ & $\frac{6}{7}$ \\ 
    $\alpha^{\prime F} _{\Delta _1, v}$ & $5 C_1 + 4 C'_2 + 19$ & $7 C_1 + 6 C'_2 + 17$ & $14C_1 + 12 C'_2 + 70$ & $14C_1 + 12 C'_2 + 70$ \\ 
    $\alpha' _{\Delta _1, v}$ & $5 C_1 + 4 C'_2 + 19$ & $7 C_1 + 6 C'_2 + 17$ & \multicolumn{2}{c}{$14C_1 + 12 C'_2 + 70$} \\
    $\beta _{\Delta_1, \Delta_1}$ & \multicolumn{4}{c}{$26 C_1 + 22 C'_2 + 84$} \\
    $\beta$ & \multicolumn{4}{c}{$26 C_1 + 23 C'_2 + 84 = 396$}\\
    \hline
\end{tabular}
}
\vspace{4mm}
\caption{Invariants for $\Gamma$.}
\label{table:ah2}
\end{table}

Hence, we have 
\[
\beta _{\Delta_1, \Delta_1} 
= \sum _{i = 1,2,3} \bigl( \alpha ' _{\Delta _1, v_i} - \operatorname{cpx}_{\sigma}(v_i) \bigr) 
= 26 C_1 + 22 C'_2 + 84.
\]
By symmetry, we have 
\begin{align*}
\beta 
= C'_2 + \beta _{\Delta_1, \Delta_1}
= 26 C_1 + 23 C'_2 + 84
= 396. 
\end{align*}

Therefore, by Corollary \ref{cor:main} (and Theorem \ref{thm:main}(3)), 
it follows that the growth series $G_{\Gamma, x_0}$ is of the form 
\[
G_{\Gamma, x_0}(t) = \frac{Q(t)}{(1-t^4)(1-t^6)(1-t^{12})}, 
\]
where $Q(t)$ is a polynomial of degree $\deg Q \le \beta + 22$. 

With the help of a computer program (breadth-first search algorithm), 
the first $\beta + 22 + 1$ ($= 419$) terms of the growth sequence $(s_{\Gamma, x_0, i})_{i \ge 0}$ can be computed. 
Using them, we can calculate $G_{\Gamma, x_0}(t)$ as follows:  
\small
\begin{align*}
G_{\Gamma, x_0}(t) 
&=  \frac{\bigl( \text{The terms of $(1-t^4)(1-t^6)(1-t^{12}) \sum _{i = 0}^{418}  s_{\Gamma, x_0, i} t^i $ of degree $418$ or less.} \bigr)}{(1-t^4)(1-t^6)(1-t^{12})} \\
&= \frac{1 + 4t + 8t^2 + 14t^3 + 23t^4 + 34t^5 + 31t^6 + 28 t^7 + 4t^8 - 4t^9 +t^{10} -8t^{11} + 8t^{12}}
{(1-t^3)^2(1-t^4)}.
\end{align*}
\normalsize
By the form of $G_{\Gamma, x_0}(t)$, 
we can conclude that the growth sequence $(s_{\Gamma, x_0, i})_{i \ge 0}$ is a quasi-polynomial on $i \ge 3$, 
and $12$ is its period. 

\begin{rmk}
By Theorem \ref{thm:C2}, we can obtain $C_2 = 1.25$. 
\end{rmk}

\subsection{The carbon allotrope {\sf CFS}} \label{subsection:ex3}
In this subsection, we consider the $3$-dimensional periodic graph $\Gamma$ shown in Figure \ref{fig:sacada29_1}. 
This graph corresponds to a carbon allotrope called {\sf CFS} (\#29 in {\sf SACADA} database). 
In {\sf SACADA} database, the fundamental region of {\sf CFS} is taken as shown in Figure \ref{fig:sacada29_1}. 
In what follows, we will proceed with this fundamental region. 
Then, we have $\#(V_{\Gamma}/L) = 6$. 
Note that all vertices of $\Gamma$ are symmetric, and hence, 
the growth sequence does not depend on the choice of its start point $x_0$. 

\begin{figure}[htbp]
\centering
\includegraphics[width=9cm]{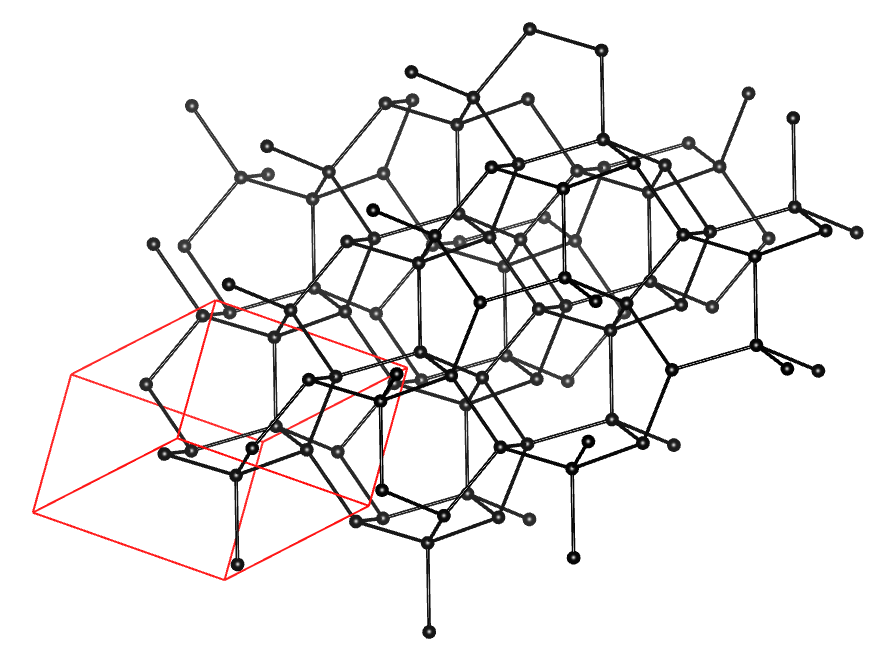}
\caption{The carbon allotrope {\sf CFS}.}
\label{fig:sacada29_1}
\end{figure}

With the help of a computer program, 
$\operatorname{Im}(\nu)$ can be computed as in Figure \ref{fig:sacada29uvec}. 
The growth polytope $P := P_{\Gamma}$ has $32$ vertices (marked in red) and $54$ facets ($48$ triangles and $6$ quadrilaterals) as in Figure \ref{fig:sacada29p}. 

\begin{figure}[htbp]
\centering
\includegraphics[width=9cm]{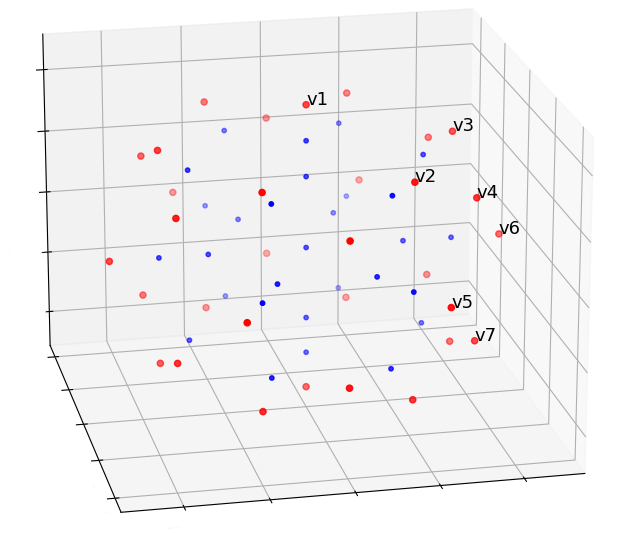}
\caption{$\operatorname{Im}(\nu)$.}
\label{fig:sacada29uvec}
\end{figure}

\begin{figure}[htbp]
\centering
\includegraphics[width=7cm]{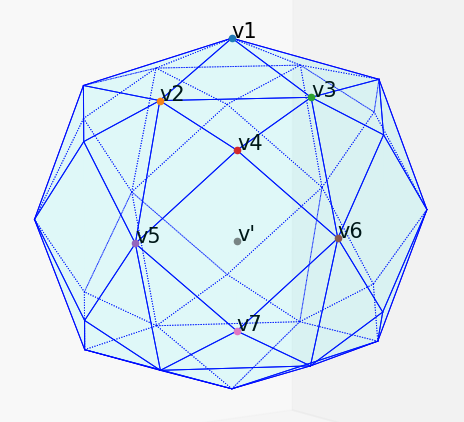}
\caption{$P$.}
\label{fig:sacada29p}
\end{figure}

With the help of a computer program, 
$C_1$ and $C'_2$ can be computed according to Proposition \ref{prop:C2} as follows:
\[
C_1 = 0.6 , \qquad C'_2 = 7. 
\]

We define $v_1, v_2, \ldots, v_7 \in \operatorname{Im}(\nu)$ as in Figure \ref{fig:sacada29uvec}. 
Let $\sigma_1, \ldots , \sigma_4 \in \operatorname{Facet}(P)$ denote the face of $P$ satisfying 
\begin{align*}
V(\sigma _1) &= \{ v_1, v_2, v_3 \}, \quad
V(\sigma _2) = \{ v_2, v_3, v_4 \}, \\
V(\sigma _3) &= \{ v_2, v_4, v_5 \}, \quad
V(\sigma _4) = \{ v_4, v_5, v_6, v_7 \}. 
\end{align*}
Then, for each $i$, we have $\operatorname{Im}(\nu) \cap \sigma _i = V(\sigma _i)$. 
For each $i = 1,2,3$, $\sigma _i$ is a simplex, and hence 
$\sigma _i$ has the trivial triangulation $T_{\sigma _i} = \{ \sigma _i \}$. 
The facet $\sigma_4$ is a rhombus. 
Let $v'$ be the intersection point of the diagonals $v_4 v_7$ and $v_5 v_6$. 
We take a triangulation $T_{\sigma _4} = \{ \Delta_1, \Delta_2, \Delta_3, \Delta _4 \}$ 
such that 
\begin{align*}
V(\Delta _1) &= \{ v', v_4, v_5 \}, \quad 
V(\Delta _2) = \{ v', v_7, v_5 \}, \\ 
V(\Delta _3) &= \{ v', v_4, v_6 \}, \quad 
V(\Delta _4) = \{ v', v_7, v_6 \}. 
\end{align*}
Note that $\operatorname{Im}(\nu)$ has a symmetry such that 
any facet of $P$ can be translated to either $\sigma _1$, $\sigma _2$, $\sigma _3$ or $\sigma _4$. 
Furthermore, $\operatorname{Im}(\nu)$ also has a symmetry such that 
$\Delta _1$ can be translated to $\Delta _2$, $\Delta _3$ or $\Delta _4$. 
In what follows, we shall calculate the invariants for the simplices $\sigma_1$, $\sigma_2$, $\sigma_3$ and $\Delta _1$. 

First, we shall compute the invariants $s^F _{\sigma, v}$'s according to the construction in the proof of Lemma \ref{lem:r}. 
For each $i = 1, \ldots , 4$, if $v \in V(\sigma _i)$, we have $\mathcal{H}''_{\sigma _i, v} = \{\{ v \}\}$. 
We also have 
\[
\mathcal{H}''_{\sigma _4, v'} = \{ \{ v_4, v_5 \}, \{ v_5, v_7 \}, \{ v_7, v_6 \}, \{ v_6, v_4 \} \}. 
\]
With the help of a computer program, we have 
\begin{align*}
\operatorname{Len}(\{ v_1 \}) &= 
\operatorname{Len}(\{ v_5 \}) = 
\operatorname{Len}(\{ v_6 \}) =  \{ 3 \}, \\
\operatorname{Len}(\{ v_2 \}) &= 
\operatorname{Len}(\{ v_3 \}) = \{ 4 \}, \quad
\operatorname{Len}(\{ v_4 \}) = 
\operatorname{Len}(\{ v_7 \}) = \{ 6 \}, \\ 
\operatorname{num}(v_1, 3) &= 
\operatorname{num}(v_5, 3) = 
\operatorname{num}(v_6, 3) = 2 \\
\operatorname{num}(v_2, 4) &= 
\operatorname{num}(v_3, 4) = 
\operatorname{num}(v_4, 6) = 
\operatorname{num}(v_7, 6) = 1. 
\end{align*}
For each $i = 1, \ldots , 4$, if $v \in V(\sigma _i)$, 
we have $m_{\sigma _i , v } (v) = \operatorname{cpx}_{\sigma _i}(v)$. 
Here, $\operatorname{cpx}_{\sigma _i}(v)$ is determined as follows: 
\begin{align*}
\operatorname{cpx}_{\sigma _1}(v_1) &= 
\operatorname{cpx}_{\sigma _3}(v_5) = 
\operatorname{cpx}_{\sigma _4}(v_5) = 
\operatorname{cpx}_{\sigma _4}(v_6) = 3, \\
\operatorname{cpx}_{\sigma _1}(v_2) &= 
\operatorname{cpx}_{\sigma _2}(v_2) = 
\operatorname{cpx}_{\sigma _3}(v_2) = 
\operatorname{cpx}_{\sigma _1}(v_3) = 
\operatorname{cpx}_{\sigma _2}(v_3) = 4, \\
\operatorname{cpx}_{\sigma _2}(v_4) &= 
\operatorname{cpx}_{\sigma _3}(v_4) = 
\operatorname{cpx}_{\sigma _4}(v_4) = 
\operatorname{cpx}_{\sigma _4}(v_7) = 6. 
\end{align*}
Since $v'$ is the center of the rhombus $\sigma _4$, we have 
\[
\operatorname{cpx}_{\sigma _4}(v')=12, \quad 
m_{\sigma _4 , v'} (v_4) = m_{\sigma _4 , v'} (v_5) = m_{\sigma _4 , v'} (v_6) = m_{\sigma _4 , v'} (v_7) = 6. 
\]
Therefore, the invariants $s_{\sigma, v} ^F$'s are computed as in Table \ref{table:ah3}. 

\begin{table}[h]
  \centering

{\renewcommand\arraystretch{1.3}
\begin{tabular}{c||ccc}
    \hline
    $v$ & $v_1$ & $v_2$ & $v_3$ \\ 
    \hline \hline
    $\operatorname{cpx}_{\sigma _1} (v) = m_{\sigma _1, v} (v)$ & $3$ & $4$ & $4$ \\
   	$s_{\sigma _1} ^{\{ v \}}(v)$ & $6$ & $4$ & $4$ \\
	$a_{\sigma _1 , v}^{\{ v \}} (v) = h_{\sigma _1, v}^{\{ v \}}$ & $\frac{3}{4}$ & $\frac{4}{5}$ & $\frac{4}{5}$ \\
	$\alpha^{\prime \{ v \}} _{\sigma _1, v} = \alpha' _{\sigma _1, v}$ & $4C_1 + 3C'_2 + 11$ & $5C_1 + 4C'_2 + 9$ & $5C_1 + 4C'_2 + 9$ \\
	$\beta _{\sigma _1, \sigma _1}$ & \multicolumn{3}{c}{$14C_1 + 11C'_2 + 18$} \\
    \hline
\end{tabular}
}

\vspace{3mm}

{\renewcommand\arraystretch{1.3}
\begin{tabular}{c||ccc}
    \hline
    $v$ & $v_2$ & $v_3$ & $v_4$ \\ 
    \hline \hline
    $\operatorname{cpx}_{\sigma _2} (v) = m_{\sigma _2, v} (v)$ & $4$ & $4$ & $6$ \\
   	$s_{\sigma _2} ^{\{ v \}}(v)$ & $4$ & $4$ & $6$ \\
	$a_{\sigma _2, v}^{\{ v \}} (v) = h_{\sigma _2, v}^{\{ v \}}$ & $\frac{4}{5}$ & $\frac{4}{5}$ & $\frac{6}{7}$ \\
	$\alpha^{\prime \{ v \}} _{\sigma _2, v} = \alpha' _{\sigma _2, v}$ & $5C_1 + 4C'_2 + 9$ & $5C_1 + 4C'_2 + 9$ & $7C_1 + 6C'_2 + 11$ \\
	$\beta _{\sigma _2, \sigma _2}$ & \multicolumn{3}{c}{$17C_1 + 14C'_2 + 15$} \\
    \hline
\end{tabular}
}

\vspace{3mm}

{\renewcommand\arraystretch{1.3}
\begin{tabular}{c||ccc}
    \hline
    $v$ & $v_2$ & $v_4$ & $v_5$ \\ 
    \hline \hline
    $\operatorname{cpx}_{\sigma _3} (v) = m_{\sigma _3, v} (v)$ & $4$ & $6$ & $3$ \\
   	$s_{\sigma _3} ^{\{ v \}}(v)$ & $4$ & $6$ & $6$ \\
	$a_{\sigma _3, v}^{\{ v \}} (v) = h_{\sigma _3, v}^{\{ v \}}$ & $\frac{8}{9}$ & $\frac{6}{7}$ & $\frac{3}{4}$ \\
	$\alpha^{\prime \{ v \}} _{\sigma _3, v} = \alpha' _{\sigma _3, v}$ & $9C_1 + 8C'_2 + 9$ & $7C_1 + 6C'_2 + 11$ & $4C_1 + 3C'_2 + 11$ \\
	$\beta _{\sigma _3, \sigma _3}$ & \multicolumn{3}{c}{$20C_1 + 17C'_2 + 18$} \\
    \hline
\end{tabular}
}

\vspace{3mm}

{\renewcommand\arraystretch{1.3}
\begin{tabular}{c||cccccccc}
    \hline
	$v$ & $v_4$ & $v_5$ & $v_6$ & $v_7$ & \multicolumn{4}{c}{$v'$} \\
	$u$ & $v_4$ & $v_5$ & $v_6$ & $v_7$ & $v_4$ & $v_5$ & $v_6$ & $v_7$ \\
    \hline \hline
    $\operatorname{cpx}_{\sigma _4} (v)$ & $6$ & $3$ & $3$ & $6$ & \multicolumn{4}{c}{$12$} \\
	$m_{\sigma _4, v} (u)$ & $6$ & $3$ & $3$ & $6$ & $6$ & $6$ & $6$ & $6$ \\
    \hline
\end{tabular}
}

\vspace{3mm}

{\renewcommand\arraystretch{1.3}
\begin{tabular}{c||cccccc}
    \hline
    $v$ & $v_4$ & $v_5$ & \multicolumn{4}{c}{$v'$} \\ 
    $F$ & $\{ v_4 \}$ & $\{ v_5 \}$ & $\{ v_4, v_5 \}$ & $\{ v_5, v_7 \}$ & $\{ v_7, v_6 \}$ & $\{ v_6, v_4 \}$ \\
    \hline \hline
    $s_{\sigma _4, v}^F (v)$ & $6$ & $6$ & $15$ & $15$ & $15$ & $15$ \\
    $a_{\Delta _1, v}^F (v)$ & $\frac{6}{7}$ & $\frac{3}{4}$ & $\frac{12}{13}$ & $\frac{12}{13}$ & $\frac{12}{13}$ & $\frac{12}{13}$ \\ 
    $h_{\Delta _1, v}^F$ & $\frac{6}{7}$ & $\frac{3}{4}$ & $\frac{6}{7}$ & $\frac{6}{7}$ & $\frac{6}{7}$ & $\frac{6}{7}$ \\ 
    $\alpha' _{\Delta _1, v}$ & $7 C_1 + 6 C'_2 + 11$ & $4 C_1 + 3 C'_2 + 11$ & \multicolumn{4}{c}{$14C_1 + 12 C'_2 + 40$} \\
    $\beta _{\Delta_1, \Delta_1}$ & \multicolumn{6}{c}{$25 C_1 + 21 C'_2 + 41$} \\
    \hline
\end{tabular}
}
\vspace{4mm}
\caption{Invariants for $\Gamma$.}
\label{table:ah3}
\end{table}

Next, we have 
\begin{align*}
\operatorname{Im}(\nu) \setminus \{ v_1 \} &\subset H \left( \frac{3}{4} v_1, v_2, v_3 \right), \\
\operatorname{Im}(\nu) \setminus \{ v_2 \} &\subset H \left( v_1, \frac{4}{5} v_2, v_3 \right) = H \left( \frac{4}{5} v_2, v_3, v_4 \right) \subset H \left( \frac{8}{9} v_2, v_4, v_5 \right), \\
\operatorname{Im}(\nu) \setminus \{ v_3 \} &\subset H \left( v_1, v_2, \frac{4}{5} v_3 \right) = H \left( v_2, \frac{4}{5} v_3, v_4 \right), \\
\operatorname{Im}(\nu) \setminus \{ v_4 \} &\subset H \left( v_2, v_3, \frac{6}{7}v_4 \right) = H \left( v_2, \frac{6}{7}v_4, v_5 \right) = H \left(\frac{6}{7}v_4, v_5 ,v' \right), \\
\operatorname{Im}(\nu) \setminus \{ v_5 \} &\subset H \left( v_2, v_4, \frac{3}{4} v_5 \right) = H \left(v_4, \frac{3}{4} v_5, v' \right).
\end{align*}
Therefore, 
the invariants $a^{\{ v \}}_{\sigma _i, v}(v)$'s and $h^{\{ v \}}_{\sigma _i, v}$'s for $i=1,2,3$ 
and $a^{\{ v \}}_{\Delta _1, v}(v)$'s and $h^{\{ v \}}_{\Delta _1, v}$'s for $v = v_4, v_5$ can be given as in Table \ref{table:ah3}. 
Furthermore, we have
\[
\operatorname{Im}(\nu) \setminus \{ v_4, v_5 \} \subset H(v_2, v_6, v_7). 
\]
Note that the six points
\[
v_2, \quad v_6, \quad v_7, \quad \frac{6}{7}v_4, \quad \frac{6}{7}v_5, \quad \frac{12}{13}v'
\]
are on a same hyperplane. 
Therefore, we have 
\begin{align*}
\operatorname{Im}(\nu) \setminus \{ v_4, v_5 \} 
\subset & H \left( \frac{6}{7}v_4, \frac{6}{7}v_5, \frac{12}{13}v' \right)
= H \left( v_7, \frac{6}{7}v_5, \frac{12}{13}v' \right) \\
&= H \left( \frac{6}{7}v_4, v_6, \frac{12}{13}v' \right)
= H \left( v_6, v_7, \frac{12}{13}v' \right). 
\end{align*}
Therefore, we can take 
\begin{align*}
&a^{\{ v_4, v_5 \}}_{\Delta _1, v'}(v') = a^{\{ v_4, v_5 \}}_{\Delta _2, v'}(v') = a^{\{ v_4, v_5 \}}_{\Delta _3, v'}(v') = a^{\{ v_4, v_5 \}}_{\Delta _4, v'}(v') = \frac{12}{13}, \\
&h^{\{ v_4, v_5 \}}_{\Delta _1, v'} = h^{\{ v_4, v_5 \}}_{\Delta _2, v'} = h^{\{ v_4, v_5 \}}_{\Delta _3, v'} = h^{\{ v_4, v_5 \}}_{\Delta _4, v'} = \frac{6}{7}. 
\end{align*}
By symmetry, we can also take
\[
a^{\{ v_5, v_7 \}}_{\Delta _1, v'}(v')
=a^{\{ v_7, v_6 \}}_{\Delta _1, v'}(v')
=a^{\{ v_6, v_4 \}}_{\Delta _1, v'}(v') = \frac{12}{13}, \quad
h^{\{ v_5, v_7 \}}_{\Delta _1, v'}
=h^{\{ v_7, v_6 \}}_{\Delta _1, v'}
=h^{\{ v_6, v_4 \}}_{\Delta _1, v'} = \frac{6}{7}.
\]

Hence, the invariants $\alpha ' _{\Delta, v}$'s and $\beta_{\Delta, \Delta}$'s are computed as in Table \ref{table:ah3}. 
By symmetry, we have 
\begin{align*}
\beta 
= C'_2 + \beta _{\Delta_1, \Delta_1}
= 25 C_1 + 22 C'_2 + 41
= 210. 
\end{align*}

Therefore, by Corollary \ref{cor:main} (and Theorem \ref{thm:main}(3)), 
it follows that the growth series $G_{\Gamma, x_0}$ is of the form 
\[
G_{\Gamma, x_0}(t) = \frac{Q(t)}{(1-t^6)(1-t^{12})^2}, 
\]
where $Q(t)$ is a polynomial of degree $\deg Q \le \beta + 30$. 

With the help of a computer program (breadth-first search algorithm), 
the first $\beta + 30 + 1$ ($= 241$) terms of the growth sequence $(s_{\Gamma, x_0, i})_{i \ge 0}$ can be computed. 
Using them, we can calculate $G_{\Gamma, x_0}(t)$ as follows:  
\small
\begin{align*}
G_{\Gamma, x_0}(t) 
&=  \frac{\bigl( \text{The terms of $(1-t^6)(1-t^{12})^2 \sum _{i = 0}^{240}  s_{\Gamma, x_0, i} t^i $ of degree $240$ or less.} \bigr)}{(1-t^6)(1-t^{12})^2} \\
&= \frac{1 + 4t + 12t^2 + 25t^3 +38t^4 + 52t^5 + 54t^6 + 44t^7 + 27t^8 + 8t^9 - t^{11} + 2 t^{12} + 2 t^{13}}{(1- t^3)(1-t^4)^2}. 
\end{align*}
\normalsize
By the form of $G_{\Gamma, x_0}(t)$, 
we can conclude that the growth sequence $(s_{\Gamma, x_0, i})_{i \ge 0}$ is a quasi-polynomial on $i \ge 3$, 
and $12$ is its period. 

\begin{rmk}
By Theorem \ref{thm:C2}, we can obtain $C_2 = 1.2414...\ $. 
\end{rmk}

\appendix
\section{Implementation of the algorithm}\label{section:a}
We prepare an implementation of the algorithm in {\sf Python} to compute the growth series of unweighted $2$-dimensional periodic graphs according to Remark \ref{rmk:implement}. 
For details, see the source code:
\begin{center}
\url{https://github.com/yokozuna57/Ehrhart_on_PG}
\end{center}
Here, we will only explain the input format. 
Let $(\Gamma, L)$ be an $n$-dimensional periodic graph. 
Let $c :=\# V_\Gamma / L$. 
When we choose representatives of $V_\Gamma / L$ and label them as $0$, $1$, \dots, $c-1$, 
we can identify $V_{\Gamma} = \{0,1,\ldots, c-1\} \times \mathbb{Z}^{\tt dim}$, where we set ${\tt dim} := n$. 

Thanks to the definition of the periodic graph, we can recover all the combinatorial data $E_{\Gamma}$, $s_{\Gamma}$ and $t_{\Gamma}$ from only the neighborhoods of vertices $(0,\mathbf{0})$, $(1,\mathbf{0})$, \dots, $(c-1,\mathbf{0})$.
In the case of the Wakatsuki graph (introduced in Example \ref{ex:inv} and \cite{IN}*{Example 2.6}), the input is as follows:
\begin{lstlisting}
dim=2
c=3
edges=[
    [(1,(0,0)),(1,(-1,0)),(1,(-1,-1)),(2,(0,0))],
    [(0,(0,0)),(0,(1,0)),(0,(1,1)),(2,(0,0))],
    [(0,(0,0)),(1,(0,0))]
]
pos=[(0,0),(0.5,0.5),(0.5,0)]
\end{lstlisting}
Here, we choose $v_0'$, $v_1'$ and $v_2'$ in Figure \ref{fig:WG1} as representatives.
The variable \verb~edge~ represents the edges of the quotient graph $\Gamma / L$.
More precisely, \verb~edges[i]~ is the list of vertices that have an edge from $v'_i = (i,(0,0))$ to them. 
For example, 
\[
\text{\verb~edges[0] = [(1,(0,0)),(1,(-1,0)),(1,(-1,-1)),(2,(0,0))]~}
\]
represents the list of edges $e$ with $s(e) = v'_0$, namely, $e'_0$, $e'_1$, $e'_2$ and $e'_3$ in Figure \ref{fig:WG2}. 
The right hand side is the list of the target vertices of $e'_0$, $e'_1$, $e'_2$ and $e'_3$.

In addition, the variable \verb~pos~ is used to give information on the periodic realization $\Phi: V_{\Gamma} \to L_{\mathbb{R}} := L \otimes _{\mathbb{Z}} \mathbb{R}$. 
More precisely, the variable \verb~pos~ presents the coordinations of $\Phi\left((0,\mathbf{0})\right)$, $\Phi\left((1,\mathbf{0})\right)$, \dots, $\Phi\left((c-1,\mathbf{0})\right) \in L_{\mathbb{R}}$. 
In this example, 
\[
\text{\verb~edges[0] = [(0,0),(0.5,0.5),(0.5,0)]~}
\]
represents
\[
\Phi(v'_0) = (0,0), \quad 
\Phi(v'_1) = (1/2,1/2), \quad
\Phi(v'_2) = (1/2,0)
\]
as in Figure \ref{fig:WG1}. 
Note that the choice of \verb~pos~ does not affect the graph structure of $\Gamma$, and hence, it does not affect the growth series of the periodic graph, but it is used in the program when calculating the values of $C_1$ and $C'_2$.
For the program to work correctly, each element of \verb~pos~ should be chosen in $[0,1)$.
If no specific realization is in mind, you can always choose \verb~pos=[(0,0,...,0)]*dim~.

\begin{bibdiv}
\begin{biblist*}

\bib{ACIK}{article}{
   author={Akiyama, Shigeki},
   author={Caalim, Jonathan},
   author={Imai, Katsunobu},
   author={Kaneko, Hajime},
   title={Corona limits of tilings: periodic case},
   journal={Discrete Comput. Geom.},
   volume={61},
   date={2019},
   number={3},
   pages={626--652},
   doi={\doi{10.1007/s00454-018-0033-x}},
}

\bib{ABHPS}{article}{
   author={Ardila, Federico},
   author={Beck, Matthias},
   author={Ho\c{s}ten, Serkan},
   author={Pfeifle, Julian},
   author={Seashore, Kim},
   title={Root polytopes and growth series of root lattices},
   journal={SIAM J. Discrete Math.},
   volume={25},
   date={2011},
   number={1},
   pages={360--378},
   doi={\doi{10.1137/090749293}},
}

\bib{BHV99}{article}{
   author={Bacher, R.},
   author={de la Harpe, P.},
   author={Venkov, B.},
   title={S\'{e}ries de croissance et polyn\^{o}mes d'Ehrhart associ\'{e}s
   aux r\'{e}seaux de racines},
   language={French, with English and French summaries},
   note={Symposium \`a la M\'{e}moire de Fran\c{c}ois Jaeger (Grenoble,
   1998)},
   journal={Ann. Inst. Fourier (Grenoble)},
   volume={49},
   date={1999},
   number={3},
   pages={727--762},
   doi={\doi{10.5802/aif.1689}}
}

\bib{BR}{book}{
   author={Beck, Matthias},
   author={Robins, Sinai},
   title={Computing the continuous discretely},
   series={Undergraduate Texts in Mathematics},
   edition={2},
   note={Integer-point enumeration in polyhedra;
   With illustrations by David Austin},
   publisher={Springer, New York},
   date={2015},
   doi={\doi{10.1007/978-1-4939-2969-6}},
}

\bib{BG2009}{book}{
   author={Bruns, Winfried},
   author={Gubeladze, Joseph},
   title={Polytopes, rings, and $K$-theory},
   series={Springer Monographs in Mathematics},
   publisher={Springer, Dordrecht},
   date={2009},
   doi={\doi{10.1007/b105283}},
}

\bib{Cha89}{article}{
   author={Chavey, D.},
   title={Tilings by regular polygons. II. A catalog of tilings},
   note={Symmetry 2: unifying human understanding, Part 1},
   journal={Comput. Math. Appl.},
   volume={17},
   date={1989},
   number={1-3},
   pages={147--165},
   doi={\doi{10.1016/0898-1221(89)90156-9}},
}

\bib{CS97}{article}{
   author={Conway, J. H.},
   author={Sloane, N. J. A.},
   title={Low-dimensional lattices. VII. Coordination sequences},
   journal={Proc. Roy. Soc. London Ser. A},
   volume={453},
   date={1997},
   number={1966},
   pages={2369--2389},
   doi={\doi{10.1098/rspa.1997.0126}},
}

\bib{Eon04}{article}{
   author={Eon, Jean-Guillaume},
   title={Topological density of nets: a direct calculation},
   journal={Acta Crystallogr. Sect. A},
   volume={60},
   date={2004},
   number={1},
   pages={7--18},
   doi={\doi{10.1107/s0108767303022037}},
}

\bib{Fri13}{article}{
   author={Fritz, Tobias},
   title={Velocity polytopes of periodic graphs and a no-go theorem for
   digital physics},
   journal={Discrete Math.},
   volume={313},
   date={2013},
   number={12},
   pages={1289--1301},
   doi={\doi{10.1016/j.disc.2013.02.010}},
}

\bib{Gal}{webpage}{
   author={Galebach, Brian},
   title={6-Uniform Tiling 673 of 673},
   myurl={https://probabilitysports.com/tilings.html?u=0&n=6&t=673},
}

\bib{GS19}{article}{
   author={Goodman-Strauss, C.},
   author={Sloane, N. J. A.},
   title={A coloring-book approach to finding coordination sequences},
   journal={Acta Crystallogr. Sect. A},
   volume={75},
   date={2019},
   number={1},
   pages={121--134},
   doi={\doi{10.1107/S2053273318014481}},
}

\bib{GKBS96}{article}{
  title={Algebraic description of coordination sequences and exact topological densities for zeolites},
  author={Grosse-Kunstleve, R. W},
  author={Brunner, G. O},
  author={Sloane, N. J. A},
  journal={Acta Crystallogr. Sect. A},
  volume={52},
  number={6},
  pages={879--889},
  year={1996},
  doi={\doi{10.1107/S0108767396007519}},
}

\bib{IN}{article}{
   author={Inoue, Takuya},
   author={Nakamura, Yusuke},
   title={Ehrhart theory on periodic graphs},
   journal={to appear in Algebraic Combinatorics},
   eprint={arXiv:2305.08177v2}, 
  doi={\doi{10.48550/arXiv.2305.08177}},
}

\bib{SACADA}{article}{
   title={Homo Citans and Carbon Allotropes: For an Ethics of Citation},
   author={Hoffmann, Roald},
   author={Kabanov, Artyom A.},
   author={Golov, Andrey A.},
   author={Proserpio, Davide M.},
   journal={Angewandte Chemie International Edition},
   volume={55},
   number={37},
   pages={10962--10976},
   year={2016},
   doi={\doi{10.1002/anie.201600655}},
}

\bib{HW}{book}{
   author={Hug, Daniel},
   author={Weil, Wolfgang},
   title={Lectures on convex geometry},
   series={Graduate Texts in Mathematics},
   volume={286},
   publisher={Springer, Cham},
   date={2020},
   doi={\doi{10.1007/978-3-030-50180-8}},
}

\bib{KS02}{article}{
   author={Kotani, Motoko},
   author={Sunada, Toshikazu},
   title={Geometric aspects of large deviations for random walks on a
   crystal lattice},
   conference={
      title={Microlocal analysis and complex Fourier analysis},
   },
   book={
      publisher={World Sci. Publ., River Edge, NJ},
   },
   date={2002},
   pages={215--223},
   doi={\doi{10.1142/9789812776594_0014}},
}

\bib{KS06}{article}{
   author={Kotani, Motoko},
   author={Sunada, Toshikazu},
   title={Large deviation and the tangent cone at infinity of a crystal
   lattice},
   journal={Math. Z.},
   volume={254},
   date={2006},
   number={4},
   pages={837--870},
   doi={\doi{10.1007/s00209-006-0951-9}},
}

\bib{MS11}{article}{
   author={Maleev, Andrey},
   author={Shutov, Anton}, 
   title={Layer-by-Layer Growth Model for Partitions, Packings, and Graphs},
   journal={Tranzit-X, Vladimir},
   date={2011},
   pages={107},
}

\bib{NSMN21}{article}{
  author={Nakamura, Yusuke},
  author={Sakamoto, Ryotaro},
  author={Mase, Takafumi},
  author={Nakagawa, Junichi},
  title={Coordination sequences of crystals are of quasi-polynomial type},
  journal={Acta Crystallogr. Sect. A},
  volume={77},
  number={2},
  pages={138--148},
  year={2021},
  doi={\doi{10.1107/S2053273320016769}},
}

\bib{MI11}{article}{
  author={Momma, Koichi},
  author={Izumi, Fujio},
  title={{\it VESTA3} for three-dimensional visualization of crystal, volumetric and morphology data},
  journal={Journal of Applied Crystallography},
  volume={44},
  number={6},
  pages={1272--1276},
  year={2011},
  doi = {\doi{10.1107/S0021889811038970}},
}

\bib{OEIS}{webpage}{
   author={The OEIS Foundation Inc.},
   title={The On-Line Encyclopedia of Integer Sequences},
   myurl={https://oeis.org/},
   label={OEIS}, 
}

\bib{SM19}{article}{
   author={Shutov, Anton},
   author={Maleev, Andrey},
   title={Coordination sequences and layer-by-layer growth of periodic structures},
   journal={Z. Kristallogr.},
   volume={234},
   date={2019},
   number={5},
   pages={291--299},
   doi={\doi{10.1515/zkri-2018-2144}},
}

\bib{SM20a}{article}{
   author={Shutov, Anton},
   author={Maleev, Andrey},
   title={Coordination sequences of 2-uniform graphs},
   journal={Z. Kristallogr.},
   volume={235},
   date={2020},
   number={4-5},
   pages={157--166},
   doi={\doi{10.1515/zkri-2020-0002}},
}

\bib{Sta96}{book}{
   author={Stanley, Richard P.},
   title={Combinatorics and commutative algebra},
   series={Progress in Mathematics},
   volume={41},
   edition={2},
   publisher={Birkh\"{a}user Boston, Inc., Boston, MA},
   date={1996},
}

\bib{Sunada}{book}{
   author={Sunada, Toshikazu},
   title={Topological crystallography},
   series={Surveys and Tutorials in the Applied Mathematical Sciences},
   volume={6},
   note={With a view towards discrete geometric analysis},
   publisher={Springer, Tokyo},
   date={2013},
   doi={\doi{10.1007/978-4-431-54177-6}},
}

\bib{Zhu02}{article}{
   author={Zhuravlev, Shun},
   title={Self-similar growth of periodic partitions and graphs (Russian)},
   journal={St. Petersburg Math. J.},
   volume={13},
   number={2},
   year={2002},
   pages={201--220},
}

\end{biblist*}
\end{bibdiv}
\end{document}